\newtheorem{thm}{Theorem}[section]
\newtheorem{cor}{Corollary}
\newtheorem{lem}[thm]{Lemma}
\newtheorem{remark}{Remark}
\newtheorem{assumptionA}{A-\hspace{-1.2mm}}
\title[Milstein-type Schemes]{Milstein-type Schemes of SDE Driven by L\'evy Noise with Super-linear Diffusion Coefficients}
\author[C. Kumar]{Chaman Kumar \\ Indian Institute of Technology Roorkee}
\thanks{This research was carried out while the author was visiting School of Mathematics, University of Edinburgh, United Kingdom during June-July, 2017. The author would like to thank the school for the hospitality and support which helped in timely finishing the project.}
\begin{document}
\maketitle
\begin{abstract}
We present a Milstein-type scheme for stochastic differential equations driven by L\'evy noise with super-linear diffusion coefficients and establish its strong convergence. 
\end{abstract}

%The title of your section 1
\section{Introduction}

%\section{Main Results}
Let $T>0$ be a fixed constant and $(\Omega, \{\mathscr{F}_t\}_{t \in [0,T]}, \mathscr{F}, P)$  be a right continuous complete filtered probability space. Suppose that $w$ is an ${\mathbb{R}}^m-$valued  standard Wiener process. Further, assume that $N(dt,dz)$ is a Poisson random measure on finite measure space $(Z,\mathscr{Z}, \nu)$ with intensity $\nu$ (i.e. $\nu(Z)<\infty$) independent of $w$. Define $\tilde N(dt,dz):=N(dt,dz)-\nu(dz)dt$. Suppose that $b(x)$ and $\sigma(x)$ are  $\mathscr{B}(\mathbb R^d)$-measurable functions and take values in ${\mathbb{R}}^d$ and ${\mathbb{R}}^{d\times m}$ respectively. Moreover, $\gamma(x,z)$ is a $\mathscr{B}(\mathbb{R}^d)\otimes \mathscr{Z}$-measurable function with values in $\mathbb{R}^d$. The functions $b(x)$, $\sigma(x)$ and $\gamma(x,z)$ are assumed to be twice differentiable in $x\in \mathbb{R}^d$. 

Consider a $d$-dimensional SDE given by,
\begin{align} \label{eq:sde}
x_t=& \xi + \int_{0}^t b(x_s)ds+\int_{0}^t \sigma(x_s)dw_s+ \int_{0}^t\int_{Z}\gamma(x_s,z)\tilde N(ds,dz),
\end{align}
almost surely for any $t \in [0,T]$ with initial value $\xi\in \mathbb{R}^d$ as an $\mathscr{F}_{0}$-measurable  random variable.
\begin{remark}
For the ease of notation, we write the right hand side of equation \eqref{eq:sde} with $x_t$ in place of $x_{t-}$. This notational convenince has been following from  \cite{dareiotis2016, kumar2017, kumar2016b}. 
\end{remark}

In this paper, we are interested in approximating SDE \eqref{eq:sde} in strong $\mathcal{L}^2$-sense when both drift and diffusion are allowed to grow super-linearly. FOr super-linear coefficients, recently many results have been developed, for example, \cite{hutzenthaler2012, hutzenthaler2013, kumar2016b, sabanis2013, kumar2016a, tretyakov2013, wang2013, zhang2014}. To the best of author's knowledge, this is the first result on explicit Milstein scheme for SDE driven by L\'evy noise with super-linear drift and diffusion coefficients. 

Let us introduce some notations. Let $v^i$ be the $i$th element of a vector $v \in \mathbb{R}^d$. For a matrix $a\in \mathbb{R}^{d \times m}$,  $a^j$ and $a^{ij}$ stand for its $j$th column and $ij$th element respectively. For a function $f:\mathbb{R}^d\to \mathbb{R}^d$, 
 $\Delta f(\cdot)$ denotes a $d\times d$ matrix with $ij$-th entry as $[\Delta f(\cdot)]^{ij}:=\partial f^i(\cdot)/\partial x^j$.  Similarly, for every $k=1,\ldots,m$ and  a function $g:\mathbb{R}^d \to \mathbb{R}^{d \times m}$,  $\Lambda^kg(\cdot)$ stands for a $d \times m$ matrix with $ij$-th element given by $[\Lambda^k g(\cdot)]^{ij}:= \sum_{l=1}^{d} g^{lk}(\cdot) \frac{\partial g^{ij}(\cdot)}{\partial x^l}$. Further, same notation $|\cdot|$ is used to denote the Euclidean norm of a vector and the Hilbert-Schmidt norm of matrix, which can be understood from the context where it appears without causing any confusion. Also, $a^*$ stands for transpose of $a \in \mathbb{R}^{d \times m}$ and $vu$ for inner product of $v,u \in \mathbb{R}^d$. $\lfloor r \rfloor$ denotes integer part of a real number $r$. Moreover, throughout this text,  $K$ is used to represent a generic constant whose value can differ at different occurrences.

Let $p_0 \geq 2$ and $\rho \geq 1$ be fixed constants. 

\begin{assumptionA} \label{as:sde:ini}
$E|\xi|^{p_0} < \infty$.
\end{assumptionA}

\begin{assumptionA} \label{as:sde:growth}
There exists a constant $L>0$ such that
$$
2xb(x)+(p_0-1)|\sigma(x)|^2  \leq L(1+|x|)^2
$$
for any $x \in \mathbb{R}^d$.
\end{assumptionA}

\begin{assumptionA} \label{as:sde:growth:gam}
There exists a constant $L>0$ such that
$$
\int_Z |\gamma(x,z)|^{p_0}\nu(dz) \leq L (1+|x|)^{p_0}
$$
for any $x \in \mathbb{R}^d$.
\end{assumptionA}

\begin{assumptionA} \label{as:sde:lipschitz}
There exists a constant $L>0$ and $\eta\in(1,\infty)$ such that
$$
\big\{2(x-\bar{x})(b(x)-b(\bar{x}))+ \eta |\sigma(x)-\sigma(\bar{ x})|^2 \big\} \vee \int_Z|\gamma(x,z)-\gamma(\bar{x},z)|^2\nu(dz) \leq L|x-\bar{x}|^2
$$
for any $x, \bar{x} \in \mathbb{R}^d$.
\end{assumptionA}

\begin{assumptionA} \label{as:sde:lip:der}
There exists a constant $L>0$ such that
\begin{align*}
|\Delta b(x)-\Delta b(\bar{x})| & \leq L(1+|x|+|\bar{x}|)^{\rho-1}|x-\bar{x}|
\\
|\Delta\sigma^{k}(x)-\Delta\sigma^{k}(\bar{x})| & \leq L(1+|x|+|\bar{x}|)^{\frac{\rho-2}{2}}|x-\bar{x}|, \,\,k=1,\ldots,m
\\
\int_Z |\Delta\gamma(x,z)-\Delta\gamma(\bar{x},z)|^2 \nu(dz)&\leq L|x-\bar{x}|^2
\end{align*}
for any $x, \bar{x} \in \mathbb{R}^d$. 
\end{assumptionA}

Before giving the explicit form of the Milstein-type scheme considered in this paper, let us introduce some notations. For every $n\in\mathbb{N}$,  define,
\begin{align*}
b^{(n)}(x)&:=\frac{b(x)}{1+n^{-1}|x|^{4\rho}} 
\\
\Lambda^{(n)}_0 (x)  & :=\frac{\sigma (x) }{1+n^{-1}|x|^{4\rho}} 
\\
\Lambda^{(n),k}_1 (x)  & :=\frac{\sum_{u=1}^{d} \sigma^{uk}(x)\frac{\partial \sigma(x)}{\partial x^u} }{1+n^{-1}|x|^{4\rho}}, \, \, k=1,\ldots, m 
\\
\Lambda^{(n)}_2 (x,z) & =\frac{\sum_{u=1}^{d} \gamma^{u}(x, z)\frac{\partial \sigma(x)}{\partial x^u}}{1+n^{-1}|x|^{4\rho}}, \, \, z\in Z 
\\
\Lambda^{(n)}_3 (x,z)&:=\frac{\sigma(x+\gamma(x,z))-\sigma(x)-\sum_{u=1}^{d} \gamma^{u}(x, z)\frac{\partial \sigma(x)}{\partial x^u}}{1+n^{-1}|x|^{4\rho}}, \, \, z\in Z
\end{align*}
for any $x\in \mathbb{R}^d$ and then write,  
\begin{align}
\sigma^{(n)}(t,x)=\Lambda^{(n)}_0 (x)+\int_{\kappa(n,t)}^t & \sum_{k=1}^{m} \Lambda_{1}^{(n),k}(x) dw_s^k+\int_{\kappa(n,t)}^t \int_Z \Lambda_2^{(n)}(x,z)\tilde{N}(ds,dz) \notag
\\
&+\int_{\kappa(n,t)}^t \int_Z \Lambda_3^{(n)}(x,z)N(ds,dz) \label{eq:sigma(n)}
\end{align}
almost surely for any $n \in \mathbb{N}$, $x \in \mathbb{R}^d$ and $t\in [0,T]$. Also, for every $n\in\mathbb{N}$, define,  
\begin{align*}
\Gamma_1^{(n),k}(x,z)&=\frac{\sum_{u=1}^{d} \sigma^{uk}(x)\frac{\partial \gamma(x,z)}{\partial x^u}}{1+n^{-1}|x|^{4\rho}}, k=1,\ldots,m, \mbox{ and } n\in \mathbb{N}, 
\\
\Gamma_2(x,z,z_1)&:=\sum_{u=1}^{d} \gamma^{u}(x,z_1)\frac{\partial \gamma(x,z)}{\partial x^u},\,\,  z_1 \in Z
\\
\Gamma_3(x,z, z_1)&:= \gamma(x+\gamma(x,z_1),z)-\gamma(x,z)-\Gamma_2(x,z,z_1), \, \, z_1\in Z
\end{align*}  
for any $x\in\mathbb{R}^d$, and $z\in Z$ and then write,
\begin{align}
\gamma^{(n)}(t,x,z):=\gamma(x,z)+\int_{\kappa(n,t)}^t& \sum_{k=1}^m \Gamma_1^{(n),k}(x,z) dw_s^k+\int_{\kappa(n,t)}^t\int_Z\Gamma_2(x,z,z_1)\tilde{N}(ds,dz_1) \notag
\\
&+\int_{\kappa(n,t)}^t\int_Z\Gamma_3(x,z,z_1)N(ds,dz_1) \label{eq:gamma(n)}
\end{align}
almost surely for any $x\in\mathbb{R}^d$, $n\in\mathbb{N}$, $z\in Z$  and $t\in [0,T]$.

In this article, following Milstein-type scheme of SDE \eqref{eq:sde} is considered, 
\begin{align} \label{eq:scheme} 
x_t^n=\xi+ \int_{0}^t b^{(n)}(x_{\kappa(n,s)}^n)ds+\int_{0}^t \sigma^{(n)}(s, x_{\kappa(n,s)}^n)dw_s+ \int_{0}^t\int_{Z}\gamma^{(n)}(s, x_{\kappa(n,s)}^n,z)\tilde N(ds,dz)
\end{align} 
almost surely for any $n \in \mathbb{N}$ and $t \in [0,T]$. 

The main result of this article is stated in the following theorem  which is proved in Section \ref{sec:rate} by a sufficiently large value of $p_0$ so that the required moment estimates are available in the following calculations. 
\begin{thm} \label{thm:main:thm}
Let Assumptions A-1 to  A-5 be satisfied. Then, the Milstein-type scheme \eqref{eq:scheme} converges to the true solution of SDE \eqref{eq:sde} in $\mathcal{L}^2$  with a rate of convergence given by, 
$$
\sup_{0 \leq t \leq T}E|x_t-x_t^n|^2 \leq Kn^{-1-\frac{2}{2+\delta}}+Kn^{-\frac{3}{2}-\frac{1}{2+\delta}} 
$$
for any $t\in[0,T]$ and $n\in\mathbb{N}$.
\end{thm}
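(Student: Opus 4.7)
The plan is to prove the error bound via Itô's formula applied to $|x_t-x_t^n|^2$ and to close the resulting inequality with Gronwall's lemma. Before doing so, I would establish uniform-in-$n$ moment bounds $\sup_n \sup_{t\in[0,T]} E|x_t^n|^{p_0}<\infty$ together with the corresponding bound for the true solution; Assumptions A-1 to A-3 combined with the taming factors $(1+n^{-1}|x|^{4\rho})^{-1}$ present in $b^{(n)}$, $\Lambda_0^{(n)}$, $\Lambda_1^{(n),k}$ and $\Gamma_1^{(n),k}$ are designed precisely for this purpose. A one-step estimate of the form $E|x_t^n-x_{\kappa(n,t)}^n|^{2q}\leq K n^{-q}$ for suitable $q$, obtained by squaring \eqref{eq:scheme} and invoking Burkholder-Davis-Gundy, would also be needed as an input.

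Setting $e_t:=x_t-x_t^n$, Itô's formula for jump processes gives, after the local martingales vanish in expectation,
\begin{align*}
E|e_t|^2 &= 2E\int_0^t e_s\bigl(b(x_s)-b^{(n)}(x_{\kappa(n,s)}^n)\bigr)ds + E\int_0^t |\sigma(x_s)-\sigma^{(n)}(s,x_{\kappa(n,s)}^n)|^2 ds \\
&\quad + E\int_0^t\int_Z |\gamma(x_s,z)-\gamma^{(n)}(s,x_{\kappa(n,s)}^n,z)|^2\nu(dz)ds.
\end{align*}
The decisive step is to split each coefficient mismatch into three parts: a \emph{taming error} such as $b-b^{(n)}$ or $\sigma-\Lambda_0^{(n)}$, which is $O(n^{-1})$ after the moment control; a \emph{Lipschitz piece} comparing the true coefficient at $x_{\kappa(n,s)}$ against its value at $x_{\kappa(n,s)}^n$, which will be absorbed through Gronwall; and a \emph{Milstein remainder}, namely the difference between $\sigma(x_s)$ (resp.\ $\gamma(x_s,z)$) and the one-step Itô-Taylor expansion encoded in $\sigma^{(n)}$ (resp.\ $\gamma^{(n)}$).

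The Milstein remainder is exactly what the correction processes $\Lambda_1^{(n),k},\Lambda_2^{(n)},\Lambda_3^{(n)}$ and $\Gamma_1^{(n),k},\Gamma_2,\Gamma_3$ were built to cancel: applying Itô's formula to $\sigma(x_r)$ on $[\kappa(n,s),s]$ produces exactly these integrands together with a second-order remainder involving $\Delta^2\sigma$ and cross terms in $\Delta\sigma,\Delta\gamma$. Estimating this remainder by Assumption A-5, combined with BDG, the moment bounds and the one-step estimate, yields the $n^{-1-2/(2+\delta)}$ and $n^{-3/2-1/(2+\delta)}$ contributions that appear in the conclusion; the $\delta$-dependence comes from splitting $Z$ into small- and large-jump regions according to the integrability of $\nu$. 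The Lipschitz piece is controlled by Assumption A-4: pairing the drift contribution $2e_s(b(x_{\kappa(n,s)})-b(x_{\kappa(n,s)}^n))$ with $\eta|\sigma(x_{\kappa(n,s)})-\sigma(x_{\kappa(n,s)}^n)|^2$ produces $L|x_{\kappa(n,s)}-x_{\kappa(n,s)}^n|^2$, which transfers to $L|e_s|^2$ at the cost of an additional one-step term.

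The main obstacle will be the sharp analysis of the Milstein remainder uniformly in $n$: since the taming denominators $(1+n^{-1}|x|^{4\rho})^{-1}$ are state-dependent, one must carefully track how $\Delta\Lambda_j^{(n)}$ and the corresponding derivatives of $\Gamma_j^{(n)}$ behave, which is exactly where a sufficiently large $p_0$ (as the theorem statement warns) has to be used to absorb the polynomial growth rate $\rho$ via Hölder. Once the decomposed estimate $E|e_t|^2 \leq K\int_0^t E|e_s|^2\,ds + K n^{-1-2/(2+\delta)} + K n^{-3/2-1/(2+\delta)}$ has been established, Gronwall's lemma closes the argument.
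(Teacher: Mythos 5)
Your overall architecture (It\^o's formula for $|e^n_t|^2$, uniform moment bounds, one-step estimates, a three-way decomposition of each coefficient mismatch, Gronwall) matches the paper's, but there are two genuine gaps. First, and most importantly, you never address the drift increment $b(x_s^n)-b(x_{\kappa(n,s)}^n)$. The scheme carries no Milstein correction for the drift, so this increment is only $O(n^{-1/2})$ in $\mathcal{L}^2$, and bounding $2E\int_0^t e_s^n\{b(x_s^n)-b(x_{\kappa(n,s)}^n)\}\,ds$ by Young or Cauchy--Schwarz caps the final estimate at $Kn^{-1}$, strictly worse than the claimed $Kn^{-1-\frac{2}{2+\delta}}$. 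The paper's Lemma \ref{lem:b-b(n):rate} resolves this by applying It\^o's formula to $b(x_s^n)-b(x_{\kappa(n,s)}^n)$, isolating the martingale part $R(s,x_{\kappa(n,s)}^n)$, and exploiting $E\big(R(s,x_{\kappa(n,s)}^n)\,\big|\,\mathscr{F}_{\kappa(n,s)}\big)=0$ so that $E\int_0^t e_{\kappa(n,s)}^n R(s,x_{\kappa(n,s)}^n)\,ds=0$; only the cross terms between $R$ and the increments $e_s^n-e_{\kappa(n,s)}^n$ survive, and these (via Lemma \ref{lem:sigma-sigma(n):rate}) produce the $n^{-\frac{3}{2}-\frac{1}{2+\delta}}$ term in the statement. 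This cancellation is the crux of the proof and is absent from your plan. Second, your Lipschitz piece is anchored at the wrong time point: pairing $2e_s\{b(x_{\kappa(n,s)})-b(x_{\kappa(n,s)}^n)\}$ with $\eta|\sigma(x_{\kappa(n,s)})-\sigma(x_{\kappa(n,s)}^n)|^2$ does not let you invoke A-\ref{as:sde:lipschitz}, which is a one-sided condition requiring the increment vector to match the argument difference; the discrepancy $2(e_s-e_{\kappa(n,s)})\{b(x_{\kappa(n,s)})-b(x_{\kappa(n,s)}^n)\}$ involves the super-linear difference of $b$ weighted by $(1+|x|)^{\rho}$ and cannot be absorbed into $\int\sup E|e|^2$ by Young's inequality. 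The paper decomposes at $x_s^n$ (the continuous interpolant at the \emph{same} time $s$), so that $2e_s^n\{b(x_s)-b(x_s^n)\}+\eta|\sigma(x_s)-\sigma(x_s^n)|^2\leq L|e_s^n|^2$ applies verbatim.

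A smaller but still substantive point: your explanation of the $\delta$-loss is incorrect. Here $\nu(Z)<\infty$, so there is no small-jump/large-jump dichotomy to exploit. The $\delta$ enters through H\"older's inequality used to separate polynomially growing weights $(1+|x|)^{\rho}$ from increment factors (requiring $E(1+|x|)^{\rho(2+\delta)/\delta}<\infty$, hence the "sufficiently large $p_0$"), combined with the fact that the $(2+\delta)$-th moment of a compensated Poisson integral over an interval of length $n^{-1}$ is only $O(n^{-1})$ rather than $O(n^{-(2+\delta)/2})$; raising to the power $\tfrac{2}{2+\delta}$ yields precisely the exponent $1+\tfrac{2}{2+\delta}$ in Lemma \ref{lem:sigma-sigma(n):rate}. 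Without these two ingredients — the conditional-expectation cancellation and the correct H\"older bookkeeping for the jump integrals — your outline does not reach the stated rate.
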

\begin{remark} \label{rem:rate} 
If A-\ref{as:sde:ini} holds for all $p_0\geq 2$, then $\delta$ which appears in Theorem \ref{thm:main:thm} can be taken to be arbitrarily close to zero and hence one can achieve a rate of convergence arbitrarily close to one for the Milstein-type scheme defined in equation \eqref{eq:scheme}. This finding is consistent with the corresponding results of the classical Milstein scheme \cite{platen2010} and the Milstein-type scheme \cite{kumar2017}. Further, no such reduction in the rate of convergence of the Milstein-type scheme is observed when $\gamma\equiv 0$ which has been proved in  \cite{kumar2016a} i.e. when $\gamma\equiv 0$, then one achieves a rate of convergence equal to one for the convergence in $\mathcal{L}^p$ for any $p\geq 2$. 
\end{remark}

\begin{remark}
To implement the Milstein-type scheme defined in equation \eqref{eq:scheme}, one requires commutative conditions on the coefficients of the SDE \eqref{eq:sde}. These conditions are explained in \cite{platen2010}.
\end{remark}

\begin{remark}
The techniques developed here can be extended to higher order schemes. 
\end{remark}

%\section{Useful Results} 
%One can find the proof of the following lemma in \cite{Mik}.
%\begin{lem} \label{lem:BGD}
%Let $p \geq 2$ be a fixed constant and $f(t,z)$ be a $\mathscr{B}([0,T]) \otimes \mathscr Z-$measurable function satisfying
%$$
%\int_0^T\int_Z |f(t, z)|^2\nu(dz) dt < \infty.
%$$
%Then, there exists a constant $K(p)$ such that 
%\begin{align*}
%E\sup_{0 \leq t \leq T} \Big|\int_0^t \int_Z & f(s,z)\tilde{N}(ds,dz)\Big|^p \leq K E\Big(\int_0^T \int_Z|f(t,z)|^2\nu(dz)dt\Big)^{p/2}
%\\
%&+K E\int_0^T \int_Z |f(t,z)|^p \nu(dz)dt.
%\end{align*}
%If $1\leq p \leq 2$, then second term on the right hand side can be dropped.
%\end{lem}

%The proof of the following lemma can be found in  \cite{yor}.
%\begin{lem} \label{lem:yor}
%Let $X$ be a positive, adapted right continuous process and $A$ be a continuous increasing process such that
%$$
%E[X_\tau| \mathscr{F}_0] \leq E[A_\tau | \mathscr{F}_0]
%$$
%for any bounded stopping time $\tau$. Then for any $\varsigma \in (0,1)$,
%$$
%E\big[\big( X_\infty^*\big)^\varsigma\big] \leq \frac{2-\varsigma}{1-\varsigma} E\big[\big( A_\infty\big)^\varsigma\big]
%$$
%where $X_\infty^*=\sup_{t\geq 0} X_t$.
%\end{lem}
The following useful remarks are required to prove the moment bound and the rate of convergence  of the Milstein-type scheme \eqref{eq:scheme}.
%---------------------------------------------------------- 
\begin{remark} \label{rem:growth}
By using Assumptions A-\ref{as:sde:lipschitz} and A-\ref{as:sde:lip:der}, there exists a constant $K>0$ such that 
\begin{align*}
\Big|\frac{\partial^2 b(x)}{\partial x^{u}x^{v}}\Big| & \leq K(1+|x|)^{\rho-1}, \,\, u,v=1,\ldots,d
\\
\Big|\frac{\partial^2 \sigma^k(x)}{\partial x^{u}x^{v}}\Big| & \leq K(1+|x|)^{\frac{\rho}{2}-1}, \,\, u,v=1,\ldots,d,k=1,\ldots,m
\\
\int_Z\Big|\frac{\partial^2 \gamma(x,z)}{\partial x^{u}x^{v}}\Big|^2\nu(dz) & \leq K, \,\, u,v=1,\ldots,d,
\\
|\Delta b(x)| & \leq K (1+|x|)^\rho, 
\\
|\Delta \sigma^{k}(x)| & \leq K(1+|x|)^\frac{\rho}{2}, \,\, k=1,\ldots,m,
\\
\int_Z |\Delta \gamma(x,z)|^2 \nu(dz) & \leq K 
\\
|b(x)- b(\bar{x})| & \leq K(1+|x|+|\bar{x}|)^{\rho}|x-\bar{x}|
\\
|\sigma^{k}(x)-\sigma^{k}(\bar{x})| & \leq K(1+|x|+|\bar{x}|)^\frac{\rho}{2}|x-\bar{x}|, \,\,k=1,\ldots,m
\\
|b(x)| & \leq K(1+|x|)^{\rho+1}
\\
|\sigma^k(x)| & \leq K(1+|x|)^{\frac{\rho}{2}+1}, \,\,k=1,\ldots,m
\\
 \int_Z |\gamma(x,z)|^2 \nu(dz) &\leq K(1+|x|)^2 
\end{align*}
for any $x, \bar{x}\in \mathbb{R}^d$, where the positive constant $K$  depends on $L$ appearing in assumptions A-\ref{as:sde:growth} to A-\ref{as:sde:lip:der}.
\end{remark}
\begin{remark} \label{rem:growth(n)}
It can be easily verified that,   
\begin{align*}
|b^{(n)}(x)|^{p} & \leq \min( K n^{p/4} (1+|x|)^p, K(1+|x|)^{(\rho+1)p} )
\\
|\Lambda_0^{(n)}(x)|^{p} &\leq \min(Kn^{p/8}(1+|x|)^{p}, K(1+|x|)^{(\rho+2)p})
\\
|\Lambda_1^{(n),k}(x)|^{p} & \leq \min(K n^{p/8}(1+|x|)^{p},K(1+|x|)^{(2\rho+2)p}), k=1,\ldots,m
\\
\int_Z|\Lambda_2^{(n)}(x,z)|^{p}\nu(dz) & \leq \min( Kn^{p/8}(1+|x|)^{p}, K(1+|x|)^{(\rho+2)p})
\\
\int_Z|\Lambda_3^{(n)}(x,z)|^{p}\nu(dz) & \leq \min( Kn^{p/8}(1+|x|)^{p}, K(1+|x|)^{(\rho+2)p})
\\
\int_Z|\Gamma_1(x,z)|^{p}\nu(dz) & \leq \min(Kn^{p/8}(1+|x|)^{p},K(1+|x|)^{(\rho+2)p})
\\
\int_Z\int_Z|\Gamma_2(x,z,z_1)|^{p}\nu(dz_1)\nu(dz) & \leq K(1+|x|)^{p}
\\
\int_Z\int_Z|\Gamma_3(x,z,z_1)|^{p}\nu(dz_1)\nu(dz) & \leq K(1+|x|)^{p}
\end{align*}
for any $2\leq p\leq p_0$, $x\in\mathbb{R}^d$, and $n\in\mathbb{N}$ where positive constant $K$ does not depend on $n$. 
\end{remark}
\section{Moment Bounds}
For the proof of the following lemma, one can refer to \cite{kumar2016b,  situ2005}.
\begin{lem} \label{lem:sde:moment:bound}
Let Assumptions A-\ref{as:sde:ini} to A-\ref{as:sde:lipschitz} be satisfied.  Then, there exists a unique solution to the SDE \eqref{eq:sde} and
$$
\sup_{0 \leq t \leq T}E|x_t|^{p_0} \leq K,
$$
where $K:=K(L,T,p_0,m, d, E|\xi|^{p_0})$ is a positive constant.
\end{lem}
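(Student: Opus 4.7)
The plan is to first invoke a standard well-posedness result for L\'evy-driven SDEs under one-sided monotonicity: Assumption A-\ref{as:sde:lipschitz} supplies a monotone-type bound on the drift together with a global Lipschitz bound on the diffusion and jump coefficients, while A-\ref{as:sde:growth} and A-\ref{as:sde:growth:gam} give the coercivity needed to prevent explosion. Existence and pathwise uniqueness then follow by the usual Picard-type construction on a truncated coefficient, passing to the limit with a stopping-time argument; this is precisely the content of the cited references, so I would not redo it in detail.

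For the moment bound, I would apply It\^o's formula to $\varphi(x):=|x|^{p_0}$ (regularised to $(\varepsilon+|x|^2)^{p_0/2}$ if one wants to avoid issues at the origin) composed with the stopped process $x_{t\wedge\tau_R}$, where $\tau_R:=\inf\{t:|x_t|\geq R\}$. The continuous part yields
\[
p_0|x|^{p_0-2}x\,b(x)\,dt+\tfrac{1}{2}\bigl(p_0|x|^{p_0-2}|\sigma(x)|^2+p_0(p_0-2)|x|^{p_0-4}|\sigma^\ast(x)x|^2\bigr)\,dt
\]
plus a local martingale. Using $|\sigma^\ast(x)x|^2\le|x|^2|\sigma(x)|^2$, the $dt$-part is dominated by
\[
\tfrac{p_0}{2}|x|^{p_0-2}\bigl(2xb(x)+(p_0-1)|\sigma(x)|^2\bigr)\le \tfrac{p_0 L}{2}|x|^{p_0-2}(1+|x|)^2\le K(1+|x|^{p_0}),
\]
which is where Assumption A-\ref{as:sde:growth} is designed to enter and exactly cancel the quadratic variation correction.

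For the compensated Poisson integral, I would use the It\^o formula identity
\[
|x+\gamma(x,z)|^{p_0}-|x|^{p_0}-p_0|x|^{p_0-2}x\,\gamma(x,z)
\]
integrated against $\nu(dz)\,ds$, and bound it via the elementary inequality $|a+b|^{p_0}-|a|^{p_0}-p_0|a|^{p_0-2}a\,b\le C(|a|^{p_0-2}|b|^2+|b|^{p_0})$. Both resulting terms are controlled by A-\ref{as:sde:growth:gam} and Young's inequality, giving an integrand dominated by $K(1+|x|^{p_0})$. The local-martingale terms (Brownian and compensated jump stochastic integrals) have expectation zero up to $\tau_R$, so taking expectation and then the supremum over $t\le T$ yields
\[
E|x_{t\wedge\tau_R}|^{p_0}\le E|\xi|^{p_0}+K\int_0^t\bigl(1+E|x_{s\wedge\tau_R}|^{p_0}\bigr)\,ds.
\]
Gronwall produces a bound uniform in $R$, and letting $R\to\infty$ (together with Fatou) gives the claim.

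The only subtle point, and the real obstacle, is the super-linear growth of $b$ and $\sigma$: one cannot apply It\^o directly to $x_t$ because a priori $E\int_0^T|b(x_s)|\,ds$ is not finite. The stopping-time localisation above is essential, and one must verify that the local martingales are genuine martingales on $[0,t\wedge\tau_R]$, which is ensured by the boundedness of the integrands on that set together with A-\ref{as:sde:growth:gam} for the jump integral. Once this has been carried out cleanly, the rest is the routine Gronwall argument.
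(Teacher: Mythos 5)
The paper does not prove this lemma itself but defers to \cite{kumar2016b, situ2005}; your localisation--It\^{o}--Gronwall argument, with A-\ref{as:sde:growth} cancelling the quadratic-variation correction and the second-order Taylor remainder handling the jump term, is exactly the standard proof those references give (and mirrors the paper's own proof of the moment bound for the scheme in Lemma \ref{lem:mb:scheme}), and it is correct. One small slip in your framing of well-posedness: A-\ref{as:sde:lipschitz} does \emph{not} give a global Lipschitz bound on $\sigma$ alone (the diffusion is super-linear here) --- it gives a joint monotonicity condition on $(b,\sigma)$ plus an $L^2(\nu)$-Lipschitz bound on $\gamma$, and uniqueness follows by applying It\^{o} to $|x_t-\bar{x}_t|^2$ and using that combined condition, not a Lipschitz estimate on $\sigma$ by itself.
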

%-----------------------------------------------------------
\begin{lem} \label{lem:sig(n):c}
Let Assumptions A-\ref{as:sde:ini} to A-\ref{as:sde:lip:der} be satisfied. Then, the following holds, 
\begin{align*}
E\big(|\sigma^{(n)}(t,x_{\kappa(n,t)}^n|^p |\mathscr{F}_{\kappa(n,t)}\big) \leq Kn^{p/8}(1+|x_{\kappa(n,t)}|)^p
\end{align*}
almost surely for any $2\leq  p \leq p_0$, $t\in[0,T]$ and $n\in\mathbb{N}$ where positive constant $K$ does not depend on $n$. 
\end{lem}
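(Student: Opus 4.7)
The plan is to decompose $\sigma^{(n)}(t, x^n_{\kappa(n,t)})$ into the four summands that define it in \eqref{eq:sigma(n)}, take conditional expectation given $\mathscr{F}_{\kappa(n,t)}$, and bound each piece separately using the deterministic moment bounds collected in Remark \ref{rem:growth(n)} together with standard stochastic integral inequalities. The key structural facts that make the proof go through are: (i) $x^n_{\kappa(n,t)}$ is $\mathscr{F}_{\kappa(n,t)}$-measurable, so when we condition it may be treated as a deterministic vector in all the $\Lambda^{(n)}_\cdot(\cdot)$ expressions; and (ii) the stochastic integrals run over an interval of length $t-\kappa(n,t) \leq K/n$, producing small multiplicative factors that will dominate the growth in $n$ coming from Remark \ref{rem:growth(n)}.

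First I would apply the elementary inequality $|a+b+c+d|^p\leq K(|a|^p+|b|^p+|c|^p+|d|^p)$ to reduce the problem to estimating the four summands individually. The leading (non-stochastic) term $\Lambda^{(n)}_0(x^n_{\kappa(n,t)})$ is $\mathscr{F}_{\kappa(n,t)}$-measurable, so Remark \ref{rem:growth(n)} gives $|\Lambda^{(n)}_0(x)|^p\leq Kn^{p/8}(1+|x|)^p$ directly, which already matches the target bound and will dictate the final answer.

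For the Wiener integral I would apply the Burkholder--Davis--Gundy inequality conditionally on $\mathscr{F}_{\kappa(n,t)}$, producing a factor $(t-\kappa(n,t))^{p/2}\leq Kn^{-p/2}$ multiplying $|\Lambda^{(n),k}_1(x)|^p$, which by Remark \ref{rem:growth(n)} is bounded by $Kn^{p/8}(1+|x|)^p$; the resulting estimate $Kn^{-3p/8}(1+|x|)^p$ is absorbed into the target. For the compensated Poisson integral I would use Kunita's first inequality, producing one quadratic-variation-type term and one jump-size-type term; the former is handled using the $p=2$ version of Remark \ref{rem:growth(n)} combined with $(t-\kappa(n,t))^{p/2}$, the latter by the $p$-version combined with $(t-\kappa(n,t))$. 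For the uncompensated Poisson integral involving $\Lambda^{(n)}_3$ I would split $N=\tilde N+\nu(dz)ds$ (legitimate since $\nu(Z)<\infty$), handle the compensated piece by Kunita exactly as above, and control the compensator piece by Jensen's inequality in $(s,z)$, bounding it by $K(t-\kappa(n,t))^{p-1}\nu(Z)^{p-1}\int_{\kappa(n,t)}^t\int_Z|\Lambda^{(n)}_3(x,z)|^p\nu(dz)ds$, which is even smaller.

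I do not anticipate any serious obstacle: every stochastic term picks up a strictly negative power of $n$ from the short interval $[\kappa(n,t),t]$, which more than compensates the $n^{p/8}$ appearing in Remark \ref{rem:growth(n)}, so only the $\Lambda^{(n)}_0$ contribution survives asymptotically. The one point deserving a little care is ensuring that the quadratic-variation factors in BDG/Kunita, which require the $p=2$ version of Remark \ref{rem:growth(n)} inside a $(\cdot)^{p/2}$, combine correctly with the time factor; this works cleanly because $(n^{1/4})^{p/2}\cdot n^{-p/2}=n^{-3p/8}\leq K n^{p/8}$ for all $p\geq 2$.
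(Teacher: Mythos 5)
Your proposal is correct and follows essentially the same route as the paper: decompose $\sigma^{(n)}$ into its four summands, bound $\Lambda_0^{(n)}$ directly by Remark \ref{rem:growth(n)}, and control the three stochastic integrals by conditional moment inequalities (BDG/Kunita, which is what the paper calls ``an elementary inequality of stochastic integral'') together with H\"older's inequality, the splitting $N=\tilde N+\nu(dz)dt$ for the $\Lambda_3^{(n)}$ term, and the length $t-\kappa(n,t)\leq Kn^{-1}$ of the integration interval. The bookkeeping of powers of $n$ matches the paper's, so nothing further is needed.
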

\begin{proof} By using the definition of $\sigma^{(n)}$ given in equation \eqref{eq:sigma(n)}, 
\begin{align}
E\big(|\sigma^{(n)}(t,x_{\kappa(n,t)}^n)|^{p}\big|\mathscr{F}_{\kappa(n,t)}\big) & \leq K E\big(|\Lambda^{(n)}_0 (x_{\kappa(n,t)}^n)|^p\big|\mathscr{F}_{\kappa(n,t)}\big)\notag  
\\
& + K E\Big(\Big|\int_{\kappa(n,t)}^t   \sum_{k=1}^{d} \Lambda_{1}^{(n),k}(x_{\kappa(n,s)}^n) dw_s^k\Big|^p \Big|\mathscr{F}_{\kappa(n,t)}\Big) \notag
\\
&+KE\Big( \Big|\int_{\kappa(n,t)}^t  \int_Z \Lambda_2^{(n)}(x_{\kappa(n,s)}^n,z)\tilde{N}(ds,dz)\Big|^p \Big|\mathscr{F}_{\kappa(n,t)}\Big) \notag
\\
& +K E\Big( \Big|\int_{\kappa(n,t)}^t  \int_Z \Lambda_3^{(n)}(x_{\kappa(n,s)}^n,z)N(ds,dz)\Big|^p \Big|\mathscr{F}_{\kappa(n,t)}\Big) \notag
\end{align}
which on the application of an elementary inequality of stochastic integral and H\"{o}lder's inequality gives, 
\begin{align}
E\big(|\sigma^{(n)}(t,x_{\kappa(n,t)}^n)|^{p}\big|\mathscr{F}_{\kappa(n,t)}\big) & \leq Kn^{p/8} (1+|x_{\kappa(n,t)}^n|)^p  \notag
\\
&+ Kn^{-\frac{p}{2}+1} E\Big(\int_{\kappa(n,t)}^t  \Big| \sum_{k=1}^{d} \Lambda_{1}^{(n),k}(x_{\kappa(n,s)}^n)\Big|^p ds \Big|\mathscr{F}_{\kappa(n,t)}\Big) \notag
\\
& +Kn^{-\frac{p}{2}+1}E\Big( \int_{\kappa(n,t)}^t  \int_Z |\Lambda_2^{(n)}(x_{\kappa(n,s)}^n,z)|^p \nu(dz) ds \Big|\mathscr{F}_{\kappa(n,t)}\Big)\notag
\\
& +KE\Big( \int_{\kappa(n,t)}^t  \int_Z |\Lambda_2^{(n)}(x_{\kappa(n,s)}^n,z)|^p \nu(dz) ds \Big|\mathscr{F}_{\kappa(n,t)}\Big) \notag
\\
&+Kn^{-\frac{p}{2}+1} E\Big( \int_{\kappa(n,t)}^t  \int_Z |\Lambda_3^{(n)}(x_{\kappa(n,s)}^n,z)|^p \nu(dz)ds \Big|\mathscr{F}_{\kappa(n,t)}\Big)\notag
\\
& +K E\Big( \int_{\kappa(n,t)}^t  \int_Z |\Lambda_3^{(n)}(x_{\kappa(n,s)}^n,z)|^p \nu(dz)ds \Big|\mathscr{F}_{\kappa(n,t)}\Big) \notag
\\
&+Kn^{-p+1} E\Big( \int_{\kappa(n,t)}^t  \int_Z |\Lambda_3^{(n)}(x_{\kappa(n,s)}^n,z)|^p \nu(dz)ds\Big|\mathscr{F}_{\kappa(n,t)}\Big) \notag
\end{align}
and then Remark \ref{rem:growth(n)} completes the proof. 
\end{proof}
As a consequence of the above lemma, one obtains the following corollary. 
%-----------------------------------------------------
\begin{cor} \label{cor:sig(n):uc} 
Let Assumptions A-\ref{as:sde:ini} to A-\ref{as:sde:lip:der} be satisfied. Then, the following holds, 
\begin{align*}
E|\sigma^{(n)}(t,x_{\kappa(n,t)}^n)|^p\leq Kn^{p/8}E(1+|x_{\kappa(n,t)}^n|)^p
\end{align*}
for any $2\leq  p \leq p_0$, $t\in[0,T]$ and $n\in\mathbb{N}$ where the positive constant $K$ does not depend on $n$. 
\end{cor}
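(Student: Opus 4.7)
The statement is essentially an unconditional version of Lemma \ref{lem:sig(n):c}, so the natural plan is simply to integrate out the conditioning. Concretely, starting from the almost sure bound
$$E\bigl(|\sigma^{(n)}(t,x_{\kappa(n,t)}^n)|^p \,\big|\, \mathscr{F}_{\kappa(n,t)}\bigr) \leq K n^{p/8}(1+|x_{\kappa(n,t)}^n|)^p,$$
I would apply the tower property by taking the expectation of both sides and observing that the right-hand side is $\mathscr{F}_{\kappa(n,t)}$-measurable. Since the random variable $|x_{\kappa(n,t)}^n|$ on the right is already $\mathscr{F}_{\kappa(n,t)}$-measurable, the expectation passes through the constant $Kn^{p/8}$ and yields exactly the claimed bound $K n^{p/8}\,E(1+|x_{\kappa(n,t)}^n|)^p$.

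There is really no obstacle here: the content of the corollary is strictly weaker than the lemma it follows, and no additional assumption or estimate is invoked beyond monotonicity of the expectation. The only thing worth double-checking is that the right-hand side of Lemma \ref{lem:sig(n):c} should read $(1+|x_{\kappa(n,t)}^n|)^p$ rather than $(1+|x_{\kappa(n,t)}|)^p$ (the missing superscript $n$ appears to be a typographical slip), since otherwise the corollary as written would not match. Assuming that reading, the proof reduces to a single line.
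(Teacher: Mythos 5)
Your proof is correct and is exactly what the paper intends: the corollary is stated as an immediate consequence of Lemma \ref{lem:sig(n):c}, obtained by taking expectations via the tower property. Your observation that the lemma's right-hand side should read $(1+|x_{\kappa(n,t)}^n|)^p$ (with the superscript $n$) is also correct; it is a typographical slip in the paper.
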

%------------------------------------------------------------
\begin{lem}  \label{lem:gam(n):c}
Let Assumptions A-\ref{as:sde:ini} to A-\ref{as:sde:lip:der} be satisfied. Then, the following holds, 
\begin{align*}
E\Big(\int_Z|\gamma^{(n)}(t,x_{\kappa(n,t)}^n,z)|^p\nu(dz)\Big|\mathscr{F}_{\kappa(n,t)}\Big)\leq K(1+|x_{\kappa(n,t)}^n|)^p
\end{align*}
almost surely for any $2\leq  p\leq p_0$, $t\in[0,T]$ and $n\in\mathbb{N}$ where the positive constant $K$ does not depend on $n$. 
\end{lem}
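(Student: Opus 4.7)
The proof should parallel that of Lemma \ref{lem:sig(n):c}, now applied to the representation \eqref{eq:gamma(n)} of $\gamma^{(n)}$. After invoking Tonelli to interchange $\int_Z\cdot\,\nu(dz)$ with the conditional expectation, I would use the elementary inequality $|a_0+a_1+a_2+a_3|^p\leq K\sum_j|a_j|^p$ to split the estimate into four conditional $L^p$ bounds: one for the zeroth-order term $\gamma(x^n_{\kappa(n,t)},z)$, and one each for the stochastic integrals involving $\Gamma_1^{(n),k}$, $\Gamma_2$ and $\Gamma_3$. The $\gamma$-piece is handled directly from Assumption A-\ref{as:sde:growth:gam} together with H\"older's inequality on the finite measure $\nu$, which yields $\int_Z|\gamma(x,z)|^p\nu(dz)\leq K(1+|x|)^p$ for all $2\leq p\leq p_0$.

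The key observation for the three stochastic-integral pieces is that the integrands $\Gamma_1^{(n),k}(x^n_{\kappa(n,t)},z)$, $\Gamma_2(x^n_{\kappa(n,t)},z,z_1)$ and $\Gamma_3(x^n_{\kappa(n,t)},z,z_1)$ do not depend on the time variable $s$, because $x^n_{\kappa(n,t)}$ is $\mathscr{F}_{\kappa(n,t)}$-measurable. For the Wiener piece I would apply the BDG inequality on the short interval $[\kappa(n,t),t]$, producing a factor $(t-\kappa(n,t))^{p/2}\leq n^{-p/2}$. Combining this with the bound $\int_Z|\Gamma_1^{(n),k}(x,z)|^p\nu(dz)\leq Kn^{p/8}(1+|x|)^p$ from Remark \ref{rem:growth(n)} gives an overall factor $n^{p/8-p/2}\leq 1$, leaving the correct bound $K(1+|x^n_{\kappa(n,t)}|)^p$.

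The compensated-Poisson piece involving $\Gamma_2$ would be treated by Kunita's first inequality, which splits the conditional $L^p$ estimate into an $L^2$-type quadratic-variation contribution and an $L^p$-type jump contribution. Pulling $\int_Z\cdot\,\nu(dz)$ inside and using H\"older on the finite measure $\nu$ to pass from $(\int_Z|\Gamma_2|^2\nu(dz_1))^{p/2}$ to $\int_Z|\Gamma_2|^p\nu(dz_1)$, both contributions reduce to the double integral $\int_Z\int_Z|\Gamma_2|^p\nu(dz_1)\nu(dz)\leq K(1+|x|)^p$ from Remark \ref{rem:growth(n)}, multiplied by nonnegative powers of $(t-\kappa(n,t))\leq n^{-1}$. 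The uncompensated integral involving $\Gamma_3$ is handled analogously after writing $N(ds,dz_1)=\tilde N(ds,dz_1)+\nu(dz_1)\,ds$: Kunita on the compensated part and H\"older on the finite-variation drift again reduce everything to $\int_Z\int_Z|\Gamma_3|^p\nu(dz_1)\nu(dz)\leq K(1+|x|)^p$. I expect the main care-point to be this H\"older exchange on the finite measure $\nu$ in the $\Gamma_2$ and $\Gamma_3$ estimates, since that is where the $L^2$-in-$z_1$ square-bracket terms must be converted to the $L^p$-in-$z_1$ form in which Remark \ref{rem:growth(n)} supplies the bound.
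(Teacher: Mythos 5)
Your proposal is correct and follows essentially the same route as the paper's proof: split $\gamma^{(n)}$ via \eqref{eq:gamma(n)} into the $\gamma$ term plus the three stochastic integrals, bound the first via Assumption A-\ref{as:sde:growth:gam}, and control the Wiener, compensated-Poisson and uncompensated-Poisson integrals with BDG/Kunita-type inequalities and H\"older (the paper's ``elementary inequality of stochastic integral''), reducing everything to the bounds in Remark \ref{rem:growth(n)} with nonpositive net powers of $n$. The term-by-term factors you identify ($n^{p/8-p/2}$ for the $\Gamma_1^{(n)}$ piece, the $L^2$-to-$L^p$ exchange on the finite measure $\nu$ for $\Gamma_2$, and the $\tilde N+\nu\,dt$ decomposition for $\Gamma_3$) match the paper's computation exactly.
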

\begin{proof}
By using the definition in \eqref{eq:gamma(n)}, 
\begin{align*}
E\Big(\int_Z|\gamma^{(n)}(t,&x_{\kappa(n,t)}^n,z)|^p\nu(dz)\Big|\mathscr{F}_{\kappa(n,t)}\Big) \leq KE\Big(\int_Z|\gamma(x_{\kappa(n,t)}^n,z)|^p\nu(dz)\Big|\mathscr{F}_{\kappa(n,t)}\Big)
\\
&+KE\Big(\int_Z\Big|\int_{\kappa(n,t)}^t \sum_{k=1}^m \Gamma_1^{(n),k}(x_{\kappa(n,s)}^n,z) dw_s^k\Big|^p\nu(dz)\Big|\mathscr{F}_{\kappa(n,t)}\Big)
\\
&+KE\Big(\int_Z\Big|\int_{\kappa(n,t)}^t\int_Z\Gamma_2(x_{\kappa(n,s)}^n,z,z_1)\tilde{N}(ds,dz_1)\Big|^p\nu(dz)\Big|\mathscr{F}_{\kappa(n,t)}\Big)
\\
& +KE\Big(\int_Z\Big|\int_{\kappa(n,t)}^t\int_Z\Gamma_3(x_{\kappa(n,s)}^n,z,z_1)N(ds,dz_1)\Big|^p\nu(dz)\Big|\mathscr{F}_{\kappa(n,t)}\Big) 
\end{align*}
which on the application of Assumption A-\ref{as:sde:growth:gam}, an elementary inequality of stochastic integral and H\"{o}lder's inequality yields, 
\begin{align*}
E\Big(\int_Z|\gamma^{(n)}(t,&x_{\kappa(n,t)}^n,z)|^p\nu(dz)\Big|\mathscr{F}_{\kappa(n,t)}\Big) \leq K (1+|x_{\kappa(n,t)}^n|)^p
\\
&+Kn^{-\frac{p}{2}+1}E\Big(\int_Z\int_{\kappa(n,t)}^t \Big|\sum_{k=1}^m \Gamma_1^{(n),k}(x_{\kappa(n,s)}^n,z)\Big|^p ds \nu(dz)\Big|\mathscr{F}_{\kappa(n,t)}\Big)
\\
&+Kn^{-\frac{p}{2}+1}E\Big(\int_Z\int_{\kappa(n,t)}^t\int_Z|\Gamma_2(x_{\kappa(n,s)}^n,z,z_1)|^p\nu(dz_1)ds \nu(dz)\Big|\mathscr{F}_{\kappa(n,t)}\Big)
\\
&+KE\Big(\int_Z\int_{\kappa(n,t)}^t\int_Z|\Gamma_2(x_{\kappa(n,s)}^n,z,z_1)|^p\nu(dz_1)ds \nu(dz)\Big|\mathscr{F}_{\kappa(n,t)}\Big)
\\
&+Kn^{-\frac{p}{2}+1}E\Big(\int_Z\int_{\kappa(n,t)}^t\int_Z|\Gamma_3(x_{\kappa(n,s)}^n,z,z_1)|^p\nu(dz_1)ds\nu(dz)\Big|\mathscr{F}_{\kappa(n,t)}\Big) 
\\
& +KE\Big(\int_Z\int_{\kappa(n,t)}^t\int_Z|\Gamma_3(x_{\kappa(n,s)}^n,z,z_1)|^p\nu(dz_1)ds\nu(dz)\Big|\mathscr{F}_{\kappa(n,t)}\Big) 
\\
& +Kn^{-p+1}E\Big(\int_Z\int_{\kappa(n,t)}^t\int_Z|\Gamma_3(x_{\kappa(n,s)}^n,z,z_1)|^p\nu(dz_1)ds\nu(dz)\Big|\mathscr{F}_{\kappa(n,t)}\Big) 
\end{align*}
and then Remark \ref{rem:growth(n)} completes the proof. 
\end{proof}
As a consequence of the above lemma, one obtains the following corollary. 
%---------------------------------------------------------
\begin{cor} \label{cor:gam(n):uc} 
Let Assumptions A-\ref{as:sde:ini} to A-\ref{as:sde:lip:der} be satisfied. Then, the following holds, 
\begin{align*}
E\int_Z|\gamma^{(n)}(t,x_{\kappa(n,t)}^n,z)|^p\nu(dz)\leq KE(1+|x_{\kappa(n,t)}^n|)^p
\end{align*}
almost surely for any $2\leq  p \leq p_0$, $t\in[0,T]$ and $n\in\mathbb{N}$ where the positive constant $K$ does not depend on $n$. 
\end{cor}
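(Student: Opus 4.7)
The plan is to deduce Corollary \ref{cor:gam(n):uc} directly from Lemma \ref{lem:gam(n):c} by taking the unconditional expectation of both sides of the almost-sure conditional estimate. Since the integrand $|\gamma^{(n)}(t,x_{\kappa(n,t)}^n,z)|^p$ is non-negative and jointly measurable in $(\omega,z)$, Tonelli's theorem allows the outer $E$ and the $\nu$-integral to be interchanged freely, so I can write
\[
E\int_Z|\gamma^{(n)}(t,x_{\kappa(n,t)}^n,z)|^p\nu(dz)
= E\Big[E\Big(\int_Z|\gamma^{(n)}(t,x_{\kappa(n,t)}^n,z)|^p\nu(dz)\,\Big|\,\mathscr{F}_{\kappa(n,t)}\Big)\Big].
\]
Substituting the almost-sure bound from Lemma \ref{lem:gam(n):c} inside the outer expectation gives the desired inequality, with the same constant $K$ as in Lemma \ref{lem:gam(n):c}, which is independent of $n$.

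There is essentially no obstacle in this step: the content lies entirely in Lemma \ref{lem:gam(n):c}, and the corollary is a one-line consequence via the tower property. The only small point worth noting is that the bound on the right-hand side, $K\,E(1+|x_{\kappa(n,t)}^n|)^p$, is only useful in subsequent arguments once a uniform-in-$n$ moment bound of order $p$ on the scheme $\{x_t^n\}$ has been established; the corollary itself, however, is an estimate valid regardless of whether this expectation is finite.
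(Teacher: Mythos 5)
Your proposal is correct and matches the paper exactly: the paper states the corollary "as a consequence of the above lemma," i.e., by taking expectations of the conditional bound in Lemma \ref{lem:gam(n):c} via the tower property, which is precisely your argument.
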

%-------------------------------------------------------------
%                Lemma : one-step : mb
%-----------------------------------------------------------------
\begin{lem} \label{lem:one-step:c:mb}
Let Assumptions A-\ref{as:sde:ini} to A-\ref{as:sde:lip:der} be satisfied. Then,  the following holds, 
\begin{align*}
E\big(|x_t^n-x_{\kappa(n,t)}^n|^{p}|\mathscr{F}_{\kappa(n,t)}\big)  & \leq K n^{-1}\big(1+|x_{\kappa(n,t)}^n|\big)^{p}, \mbox{ when } \, 8/3\leq p \leq p_0, \mbox{ and }
\\
E\big(|x_t^n-x_{\kappa(n,t)}^n|^{p}|\mathscr{F}_{\kappa(n,t)}\big)  & \leq K n^{-3p/8}\big(1+|x_{\kappa(n,t)}^n|\big)^{p}, \mbox{ when } \, 1\leq p < 8/3,
\end{align*}
almost surely for any  $t \in [0,T]$ and $n \in \mathbb{N}$ where positive constant $K$ does not depend on $n$.
\end{lem}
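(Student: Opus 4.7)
The plan is to start by writing the increment explicitly from the scheme \eqref{eq:scheme}. Since $\kappa(n,s)=\kappa(n,t)$ for every $s\in[\kappa(n,t),t]$, the integrand $x^n_{\kappa(n,s)}$ is constantly equal to $x^n_{\kappa(n,t)}$, so
\[
x_t^n-x_{\kappa(n,t)}^n=(t-\kappa(n,t))\,b^{(n)}(x_{\kappa(n,t)}^n)+\int_{\kappa(n,t)}^t\sigma^{(n)}(s,x_{\kappa(n,t)}^n)dw_s+\int_{\kappa(n,t)}^t\!\!\int_Z\gamma^{(n)}(s,x_{\kappa(n,t)}^n,z)\tilde N(ds,dz).
\]
I would then take conditional $p$th moments and treat the three pieces separately, first for $p\geq 2$.

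For the drift piece I would use $|t-\kappa(n,t)|\leq n^{-1}$ together with the bound $|b^{(n)}(x)|^p\leq K n^{p/4}(1+|x|)^p$ from Remark~\ref{rem:growth(n)} to get an $n^{-3p/4}(1+|x_{\kappa(n,t)}^n|)^p$ contribution. For the Wiener integral I would combine the Burkholder--Davis--Gundy inequality with H\"older in time, which reduces matters to $\int_{\kappa(n,t)}^t E\bigl(|\sigma^{(n)}|^{p}\bigm|\mathscr{F}_{\kappa(n,t)}\bigr)ds$; substituting Lemma~\ref{lem:sig(n):c} produces $K n^{-p/2+1}\cdot n^{-1}\cdot n^{p/8}(1+|x_{\kappa(n,t)}^n|)^p = Kn^{-3p/8}(1+|x_{\kappa(n,t)}^n|)^p$.

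The main obstacle is the compensated Poisson integral, since the usual BDG bound produces both a quadratic-variation term and a jump $L^p$ term:
\[
E\Bigl(\bigl|\textstyle\int\!\!\int\gamma^{(n)}\tilde N\bigr|^p\Bigm|\mathscr{F}_{\kappa(n,t)}\Bigr)\leq K\,E\Bigl(\bigl(\textstyle\int\!\!\int|\gamma^{(n)}|^2\nu dz\,ds\bigr)^{p/2}\!\Bigm|\mathscr{F}_{\kappa(n,t)}\Bigr)+K\,E\Bigl(\textstyle\int\!\!\int|\gamma^{(n)}|^p\nu dz\,ds\Bigm|\mathscr{F}_{\kappa(n,t)}\Bigr).
\]
On the first summand I would apply Jensen's inequality in $\nu$ (using $\nu(Z)<\infty$) to pass from an $L^2(\nu)$ to an $L^p(\nu)$ norm, then H\"older in time, and finally Lemma~\ref{lem:gam(n):c}; this gives $K n^{-p/2}(1+|x_{\kappa(n,t)}^n|)^p$. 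The second summand is handled directly by Lemma~\ref{lem:gam(n):c} and yields $K n^{-1}(1+|x_{\kappa(n,t)}^n|)^p$, which dominates for $p\geq 2$.

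Adding the three contributions one gets, for $p\geq 2$,
\[
E\bigl(|x_t^n-x_{\kappa(n,t)}^n|^p\bigm|\mathscr{F}_{\kappa(n,t)}\bigr)\leq K\bigl(n^{-3p/4}+n^{-3p/8}+n^{-1}\bigr)(1+|x_{\kappa(n,t)}^n|)^p,
\]
and the dominant exponent is $-1$ exactly when $p\geq 8/3$, giving the first case, while $-3p/8$ dominates for $2\leq p<8/3$, giving the second case in that range. Finally, to cover $1\leq p<2$, I would apply conditional Jensen's inequality against the $p=2$ estimate already obtained, which raises the bound $K n^{-3/4}(1+|x_{\kappa(n,t)}^n|)^2$ to the power $p/2$ and yields exactly $K n^{-3p/8}(1+|x_{\kappa(n,t)}^n|)^p$. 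The delicate point throughout is keeping track of the balance between the $n^{p/8}$ growth of $\sigma^{(n)}$ (Lemma~\ref{lem:sig(n):c}) and the $n$-free bound for $\gamma^{(n)}$ (Lemma~\ref{lem:gam(n):c}), since it is precisely this asymmetry that forces the two-regime statement at the threshold $p=8/3$.
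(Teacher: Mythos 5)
Your proposal is correct and follows essentially the same route as the paper: split the increment into drift, Wiener and compensated Poisson pieces, bound each via the elementary (BDG/H\"older) inequalities together with Remark~\ref{rem:growth(n)} and Lemmas~\ref{lem:sig(n):c} and~\ref{lem:gam(n):c}, and observe that the resulting exponents $n^{-3p/4}$, $n^{-3p/8}$, $n^{-p/2}$ and $n^{-1}$ cross over at $p=8/3$. Your explicit treatment of the range $1\leq p<2$ by conditional Jensen against the $p=2$ bound is a point the paper leaves implicit, and it is handled correctly.
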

\begin{proof} By using the Milstein-type scheme defined in equation \eqref{eq:scheme}, one can write, 
\begin{align*}
E\big(|x_t^n-x_{\kappa(n,t)}^n|^{p}|\mathscr{F}_{\kappa(n,t)}\big) & \leq K E\Big(\Big|\int^{t}_{\kappa(n,t)} b^{(n)}(x_{\kappa(n,s)}^n)ds\Big|^{p}\Big|\mathscr{F}_{\kappa(n,t)}\Big) \notag
\\
&+K E\Big(\Big|\int^{t}_{\kappa(n,t)} \sigma^{(n)}(s, x_{\kappa(n,s)}^n)dw_s\Big|^{p}\Big|\mathscr{F}_{\kappa(n,t)}\Big)
\\
& + K E\Big(\Big|\int^{t}_{\kappa(n,t)}\int_Z \gamma^{(n)}(s, x_{\kappa(n,s)}^n, z)\tilde N(ds,dz)\Big|^{p}\Big|\mathscr{F}_{\kappa(n,t)}\Big)
\end{align*}
which on the application of an elementary inequality of stochastic integral and H\"older's inequality yields, 
\begin{align}
E\big(|x_t^n-x_{\kappa(n,t)}^n|^{p}|\mathscr{F}_{\kappa(n,t)}\big) & \leq K n^{-p+1}E \Big(\int_{\kappa(n,t)}^{t}|b^{(n)}(x_{\kappa(n,s)}^n)|^{p} ds \Big|\mathscr{F}_{\kappa(n,t)}\Big) \notag 
\\
&+ K n^{-\frac{p}{2}+1}E\Big(\int_{\kappa(n,t)}^{t}|\sigma^{(n)}(s,x_{\kappa(n,s)}^n)|^{p}ds\Big|\mathscr{F}_{\kappa(n,t)}\Big) \notag
\\
& + K n^{-\frac{p}{2}+1}E\Big(\int^{t}_{\kappa(n,t)}\int_Z |\gamma^{(n)}(s, x_{\kappa(n,s)}^n, z)|^p  \nu(dz) ds\Big|\mathscr{F}_{\kappa(n,t)}\Big) \notag
\\
& +K E\Big( \int^{t}_{\kappa(n,t)}\int_Z |\gamma^{(n)}(s,x_{\kappa(n,s)}^n, z)|^p  \nu(dz)ds\Big|\mathscr{F}_{\kappa(n,t)}\Big) \notag
\end{align}
and then the application of Lemmas [\ref{lem:sig(n):c}, \ref{lem:gam(n):c}] completes the proof. 
\end{proof}
As a consequence of the above lemma, one obtains the following corollary. 
%------------------------------------------------------------
\begin{cor} \label{cor:one-step:uc:mb}
Let Assumptions A-\ref{as:sde:ini} to A-\ref{as:sde:lip:der} be satisfied. Then,  the following holds, 
\begin{align*}
E|x_t^n-x_{\kappa(n,t)}^n|^{p} & \leq K n^{-1}E\big(1+|x_{\kappa(n,t)}^n|\big)^{p}, \mbox{ when } \, 8/3\leq p \leq p_0, \mbox{ and }
\\
E|x_t^n-x_{\kappa(n,t)}^n|^{p}|  & \leq K n^{-3p/8}E\big(1+|x_{\kappa(n,t)}^n|\big)^{p}, \mbox{ when } \, 1\leq p < 8/3, 
\end{align*}
for any  $t \in [0,T]$ and $n \in \mathbb{N}$ where positive constant $K$ does not depend on $n$.
\end{cor}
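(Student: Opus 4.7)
The plan is to derive the unconditional moment bound directly from the conditional estimate established in Lemma \ref{lem:one-step:c:mb} by invoking the tower property of conditional expectation; no fresh stochastic analysis is required.

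In both parameter ranges, Lemma \ref{lem:one-step:c:mb} provides an almost sure upper bound of the form
$$
E\big(|x_t^n - x_{\kappa(n,t)}^n|^p \,\big|\, \mathscr{F}_{\kappa(n,t)}\big) \leq K n^{-\alpha} \big(1 + |x_{\kappa(n,t)}^n|\big)^p,
$$
where $\alpha = 1$ for $8/3 \leq p \leq p_0$ and $\alpha = 3p/8$ for $1 \leq p < 8/3$. Since $x_{\kappa(n,t)}^n$ is $\mathscr{F}_{\kappa(n,t)}$-measurable, the entire right-hand side is $\mathscr{F}_{\kappa(n,t)}$-measurable, and the inequality can be integrated against $P$ without any further manipulation.

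Taking unconditional expectation on both sides and applying the identity $E\big[E[\,\cdot\,|\mathscr{F}_{\kappa(n,t)}]\big] = E[\,\cdot\,]$ yields precisely the two bounds claimed in the corollary. The constant $K$ remains independent of $n$ and $t$ because it is inherited from the lemma, which already asserts uniformity in these parameters.

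No genuine obstacle is anticipated: the corollary is a formal consequence of the law of total expectation applied to an inequality that has already been verified almost surely. The only point worth recording explicitly is the $\mathscr{F}_{\kappa(n,t)}$-measurability of the majorant $\big(1 + |x_{\kappa(n,t)}^n|\big)^p$, which is immediate from the construction of the scheme \eqref{eq:scheme} since $x_{\kappa(n,t)}^n$ is built from the Wiener and Poisson increments up to time $\kappa(n,t)$ together with $\xi$.
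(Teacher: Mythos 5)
Your proposal is correct and coincides with the paper's intended argument: the paper states the corollary as an immediate consequence of Lemma \ref{lem:one-step:c:mb}, obtained by taking expectations of the conditional bounds via the tower property. Your explicit remark on the $\mathscr{F}_{\kappa(n,t)}$-measurability of the majorant is a harmless but welcome bit of bookkeeping.
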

%-----------------------------------------------------------------
%                Lemma : scheme :momen bound
%-----------------------------------------------------------------
The following lemma establishes the moment bounds of the Milstein-type scheme defined in equation \eqref{eq:scheme}. 
\begin{lem} \label{lem:mb:scheme}
Let Assumptions A-\ref{as:sde:ini} to A-\ref{as:sde:lip:der} be satisfied. Then, the following holds, 
$$
\sup_{n \in \mathbb{N}}\sup_{0 \leq t \leq T}E|x_t^n|^{p_0} \leq K,
$$
where $K:=K(L,T,p_0,m,d,E|\xi|^{p_0})$ is a positive constant which does not depend on $n$.
\end{lem}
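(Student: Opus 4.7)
The plan is to apply It\^o's formula to $|x_t^n|^{p_0}$ along the scheme \eqref{eq:scheme} and then combine the drift, quadratic-variation, and jump-compensator contributions in a form to which the monotonicity Assumption A-\ref{as:sde:growth} can be applied at the frozen point $x_{\kappa(n,s)}^n$. Corollary \ref{cor:one-step:uc:mb} together with Remark \ref{rem:growth(n)} will absorb the freezing error into an $O(n^{-1})$ perturbation, and Gronwall's inequality then closes the loop.

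First I would apply It\^o to $|x_t^n|^{p_0}$, take expectations, and use the elementary inequality $|\sigma^{(n)*}x|^2\le |x|^2|\sigma^{(n)}|^2$ together with the pointwise binomial bound
\[
\big||y+h|^{p_0}-|y|^{p_0}-p_0|y|^{p_0-2}y\cdot h\big|\le K\big(|y|^{p_0-2}|h|^2+|h|^{p_0}\big)
\]
on the jump compensator to arrive at
\begin{align*}
E|x_t^n|^{p_0}\le E|\xi|^{p_0}+E\int_0^t\Big\{&p_0|x_s^n|^{p_0-2}x_s^n\cdot b^{(n)}(x_{\kappa(n,s)}^n)+\tfrac{p_0(p_0-1)}{2}|x_s^n|^{p_0-2}|\sigma^{(n)}(s,x_{\kappa(n,s)}^n)|^2\\
&+K\int_Z|x_s^n|^{p_0-2}|\gamma^{(n)}(s,x_{\kappa(n,s)}^n,z)|^2\nu(dz)\\
&+K\int_Z|\gamma^{(n)}(s,x_{\kappa(n,s)}^n,z)|^{p_0}\nu(dz)\Big\}ds.
\end{align*}

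Second, the key manipulation is to replace the evaluation point $x_s^n$ by $x_{\kappa(n,s)}^n$ in the integrands and to peel off the leading terms of $\sigma^{(n)}$ and $\gamma^{(n)}$. Writing $x_s^n=x_{\kappa(n,s)}^n+(x_s^n-x_{\kappa(n,s)}^n)$ in the drift and decomposing $\sigma^{(n)}(s,x_{\kappa(n,s)}^n)=\Lambda_0^{(n)}(x_{\kappa(n,s)}^n)+\text{(stochastic integrals on }[\kappa(n,s),s])$, the dominant contribution combines into
\[
\frac{p_0|x_{\kappa(n,s)}^n|^{p_0-2}\big\{2 x_{\kappa(n,s)}^n\cdot b(x_{\kappa(n,s)}^n)+(p_0-1)|\sigma(x_{\kappa(n,s)}^n)|^2\big\}}{2\big(1+n^{-1}|x_{\kappa(n,s)}^n|^{4\rho}\big)},
\]
after using $(1+n^{-1}|x|^{4\rho})^{-2}\le (1+n^{-1}|x|^{4\rho})^{-1}$ to align the two denominators. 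Assumption A-\ref{as:sde:growth} bounds the numerator bracket by $L(1+|x_{\kappa(n,s)}^n|)^2$, and dividing by a quantity $\ge 1$ preserves this bound even when the bracket is negative (the ratio is then nonpositive), so the main term is $\le K(1+|x_{\kappa(n,s)}^n|^{p_0})$. The remaining pieces---the displacement cross-term $(x_s^n-x_{\kappa(n,s)}^n)\cdot b^{(n)}$, the Milstein residual $|\sigma^{(n)}|^2-|\Lambda_0^{(n)}|^2$, and the freezing increments $|x_s^n|^{p_0-2}-|x_{\kappa(n,s)}^n|^{p_0-2}$---are all controlled by Cauchy--Schwarz and H\"older combined with Corollaries \ref{cor:sig(n):uc}, \ref{cor:gam(n):uc}, \ref{cor:one-step:uc:mb} and the saturations in Remark \ref{rem:growth(n)}, each producing an additional $n^{-1}$ coming from the short interval $[\kappa(n,s),s]$.

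Third, once every error is bounded by $Kn^{-1}(1+E|x_{\kappa(n,s)}^n|^{p_0})$, the estimate reads $E|x_t^n|^{p_0}\le K+K\int_0^t\sup_{0\le u\le s}E|x_u^n|^{p_0}\,ds$ and Gronwall's inequality gives the claim. The main obstacle is the Milstein-specific bookkeeping: since $\sigma^{(n)}(s,x_{\kappa(n,s)}^n)$ is itself a stochastic integral over $[\kappa(n,s),s]$ whose ingredients $\Lambda_i^{(n)}$ and $\Gamma_i^{(n)}$ can grow like $n^{1/8}(1+|x|)$ by Remark \ref{rem:growth(n)}, one must verify that the short-interval H\"older and It\^o-isometry estimates produce enough negative powers of $n$ to beat back these polynomial blow-ups in every cross term. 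The taming exponent $4\rho$ in the definitions of $b^{(n)}$, $\Lambda_i^{(n)}$ and $\Gamma_1^{(n),k}$ has been chosen precisely so that this cancellation works at the level of $p_0$-th moments, and Remark \ref{rem:growth(n)} has pre-packaged exactly the saturations needed to carry out the argument mechanically.
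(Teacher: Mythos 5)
Your proposal follows essentially the same route as the paper's proof: It\^o's formula for $|x_t^n|^{p_0}$, the Taylor-remainder bound on the jump compensator, peeling $\Lambda_0^{(n)}$ off $\sigma^{(n)}$ so that Assumption A-\ref{as:sde:growth} can be applied at the frozen point through the shared taming denominator $1+n^{-1}|x|^{4\rho}$, absorbing the cross and freezing errors via the one-step bounds (Corollary \ref{cor:one-step:uc:mb}) and Remark \ref{rem:growth(n)}, and then Gronwall. The only cosmetic differences are that the paper keeps the prefactor $|x_s^n|^{p_0-2}$ (rather than freezing it to $|x_{\kappa(n,s)}^n|^{p_0-2}$) in the dominant coercivity term, which spares it one error term involving the possibly large negative bracket, and that not every residual is actually $O(n^{-1})$ (the drift cross term, for instance, only yields $O(n^{-1/2})$), though this is immaterial since all that is needed is a bound of the form $K+K\int_0^t\sup_{0\le u\le s}E|x_u^n|^{p_0}\,ds$.
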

\begin{proof} 
Let us assume that  $p_0 \geq 4$. By using It\^{o}'s formula for the Milstein-type scheme defined in equation \eqref{eq:scheme},
\begin{align}
|x_t^n|^{p_0} = &|\xi|^{p_0}+ p_0 \int_0^{t} |x_s^n|^{p_0-2} x_s^n b^{(n)}(x_{\kappa(n,s)}^n) ds + p_0\int_{0}^{t} |x_s^n|^{p_0-2} x_s^n \sigma^{(n)}(s,x_{\kappa(n,s)}^n)  dw_s  \notag
\\
& + \frac{p_0(p_0-2)}{2} \int_{0}^{t} |x_s^n|^{p_0-4}| \sigma^{(n)*}(s,x_{\kappa(n,s)}^n)x_s^n|^2 ds  + \frac{p_0}{2} \int_{t_0}^{t} |x_s^n|^{p_0-2}|\sigma^{(n)}(s,x_{\kappa(n,s)}^n)|^2 ds  \notag
\\
&+  p_0 \int_{0}^{t} \int_{Z} |x_s^n|^{p_0-2} x_{s}^n  \gamma^{(n)}(s,x_{\kappa(n,s)}^n,z)   \tilde N(ds,dz) \notag
\\
+\int_{0}^{t} & \int_{Z} \big\{ |x_{s}^n+ \gamma^{(n)}(s,x_{\kappa(n,s)}^n,z)|^{p_0}-|x_{s}^n|^{p_0}-p_0|x_{s}^n|^{p_0-2} x_{s}^n  \gamma^{(n)}(s,x_{\kappa(n,s)}^n,z) \big\}N(ds,dz) \notag
\end{align}
almost surely for any $t \in [0,T]$ and $n \in \mathbb{N}$. By taking expectation on both sides, martingale terms vanish hence one obtains,  
\begin{align}
E|x_t^n|^{p_0} = &E|\xi|^{p_0}+ p_0 E \int_0^{t} |x_s^n|^{p_0-2} x_s^n b^{(n)}(x_{\kappa(n,s)}^n) ds \notag 
\\
& + \frac{p_0(p_0-1)}{2} E\int_{0}^{t} |x_s^n|^{p_0-2}| \sigma^{(n)}(s,x_{\kappa(n,s)}^n)|^2 ds  \notag
\\
+E\int_{0}^{t} \int_{Z}& \big\{ |x_{s}^n+ \gamma^{(n)}(s,x_{\kappa(n,s)}^n,z)|^{p_0}-|x_{s}^n|^{p_0}-p_0|x_{s}^n|^{p_0-2} x_{s}^n  \gamma^{(n)}(s,x_{\kappa(n,s)}^n,z) \big\}\nu(dz)ds \notag
\end{align}
for any $t\in[0,T]$ and $n\in \mathbb{N}$. 
For last term on the right hand side of the above equation, one applies formula for the remainder for map $y \in \mathbb{R}^d \to |y|^{p_0}$, 
\begin{align*}
|y_1+y_2|^{p_0}-|y_1|^{p_0}-p_0 |y_1|^{p_0-2}y_1y_2 \leq K\int_0^1|y_1+sy_2|^{p_0-2}|y_2|^2 ds \leq K(|y_1|^{p_0-2}|y_2|^2+|y_2|^{p_0})
\end{align*}  
for any $y_1,y_2\in \mathbb{R}^d$ which therefore gives,  
\begin{align}
E|x_t^n|^{p_0} \leq  &E|\xi|^{p_0}+ p_0 E \int_0^{t} |x_s^n|^{p_0-2} x_s^n b^{(n)}(x_{\kappa(n,s)}^n) ds \notag 
\\
& + \frac{p_0(p_0-1)}{2} E\int_{0}^{t} |x_s^n|^{p_0-2}| \sigma^{(n)}(s,x_{\kappa(n,s)}^n)|^2 ds  \notag
\\
&+K E\int_{0}^{t} \int_{Z}|x_{s}^n|^{p_0-2} |\gamma^{(n)}(s,x_{\kappa(n,s)}^n,z)|^{2}\nu(dz)ds \notag
\\
&+K E\int_{0}^{t} \int_{Z}  |\gamma^{(n)}(s,x_{\kappa(n,s)}^n,z)|^{p_0}\nu(dz)ds. \notag
\end{align}
for any $t\in[0,T]$ and $n\in\mathbb{N}$. Now, one uses the following inequality for estimating the third term on the right hand side of the above inequality,  
\begin{align*}
|y_0&+y_1+y_2+y_3|^2=|y_0|^2+|y_1|^2+|y_2|^2+|y_3|^2+2\sum_{i=1}^d\sum_{j=1}^my_0^{ij}y_1^{ij}+2\sum_{i=1}^d\sum_{j=1}^my_0^{ij}y_2^{ij}
\\
&\qquad+2\sum_{i=1}^d\sum_{j=1}^my_0^{ij}y_3^{ij}+2\sum_{i=1}^d\sum_{j=1}^my_1^{ij}y_2^{ij}+2\sum_{i=1}^d\sum_{j=1}^m y_1^{ij}y_3^{ij}+2\sum_{i=1}^d\sum_{j=1}^my_2^{ij}y_3^{ij}
\\
&\leq |y_0|^2+K|y_1|^2+K|y_2|^2+K|y_3|^2+2\sum_{i=1}^d\sum_{j=1}^my_0^{ij}y_1^{ij}+2\sum_{i=1}^d\sum_{j=1}^my_0^{ij}y_2^{ij}+2\sum_{i=1}^d\sum_{j=1}^my_0^{ij}y_3^{ij}
\end{align*}
for any $y_0, y_1, y_2, y_3 \in \mathbb{R}^{d \times m}$ to obtain the following,
\begin{align}
E|x_t^n|^{p_0} \leq  &E|\xi|^{p_0}+ p_0 E \int_0^{t} |x_s^n|^{p_0-2} (x_s^n-x_{\kappa(n,s)}^n) b^{(n)}(x_{\kappa(n,s)}^n) ds \notag 
\\
& + \frac{p_0}{2} E\int_{0}^{t} |x_s^n|^{p_0-2}\{2x_{\kappa(n,s)}^nb^{(n)}(x_{\kappa(n,s)}^n)+(p_0-1)| \Lambda_0^{(n)}(x_{\kappa(n,s)}^n)|^2\} ds  \notag
\\
& + K E\int_{0}^{t} |x_s^n|^{p_0-2}\Big|\int_{\kappa(n,s)}^s \sum_{k=1}^{m}
\Lambda_1^{(n),k}(x_{\kappa(n,r)})dw_r^k\Big|^2ds \notag
\\
& + K E\int_{0}^{t} |x_s^n|^{p_0-2}\Big|\int_{\kappa(n,s)}^s\int_Z\Lambda_2^{(n)}(x_{\kappa(n,r)}^n,z) \tilde{N}(dr,dz)\Big|^2 ds \notag
\\
& + K E\int_{0}^{t} |x_s^n|^{p_0-2}\Big|\int_{\kappa(n,s)}^s\int_Z\Lambda_3^{(n)}(x_{\kappa(n,r)}^n,z) N(dr,dz)\Big|^2 ds \notag
\\
& + KE\int_{0}^{t} |x_s^n|^{p_0-2}\sum_{i=1}^d\sum_{j=1}^m\Lambda_0^{(n),ij}(x_{\kappa(n,s)}^n)\int_{\kappa(n,s)}^s\sum_{k=1}^m\Lambda_1^{(n),k,ij}(x_{\kappa(n,r)}^n)dw_r^k ds \notag
\\
& + KE\int_{0}^{t} |x_s^n|^{p_0-2}\sum_{i=1}^d\sum_{j=1}^m\Lambda_0^{(n),ij}(x_{\kappa(n,s)}^n)\int_{\kappa(n,s)}^s\int_Z\Lambda_2^{(n),ij}(x_{\kappa(n,r)}^n,z)\tilde{N}(dr,dz) ds \notag
\\
& + KE\int_{0}^{t} |x_s^n|^{p_0-2}\sum_{i=1}^d\sum_{j=1}^m\Lambda_0^{(n),ij}(x_{\kappa(n,s)}^n)\int_{\kappa(n,s)}^s\int_Z\Lambda_3^{(n),ij}(x_{\kappa(n,r)}^n,z)N(dr,dz) ds \notag
\\
&+K E\int_{0}^{t} |x_{s}^n|^{p_0} ds +K E\int_{0}^{t} \int_{Z}  |\gamma(x_{\kappa(n,s)}^n,z)|^{p_0}\nu(dz)ds \notag
\\
&+K E\int_{0}^{t} \int_{Z}  \Big|\int_{\kappa(n,s)}^s\sum_{k=1}^m\Gamma_1^{(n),k}(x_{\kappa(n,r)}^n,z)dw_r^k\Big|^{p_0}\nu(dz)ds \notag
\\
&+K E\int_{0}^{t} \int_{Z}  \Big|\int_{\kappa(n,s)}^s\int_Z\Gamma_2(x_{\kappa(n,r)}^n,z,z_1)\tilde{N}(dr,dz_1)\Big|^{p_0}\nu(dz)ds \notag
\\
&+K E\int_{0}^{t} \int_{Z}  \Big|\int_{\kappa(n,s)}^s\int_Z\Gamma_3(x_{\kappa(n,r)}^n,z,z_1)N(dr,dz_1)\Big|^{p_0}\nu(dz)ds \notag
\\
&=:E|\xi|^{p_0}+K+K\int_0^t\sup_{0 \leq r \leq s}E|x_r^n|^{p_0}ds \notag
\\
& +F_1+F_2+F_3+F_4+F_5+F_6+F_7+F_8+F_8+F_9+F_{10}+F_{11} \label{eq:F1+F11}
\end{align}
for any $t \in [0,T]$ and $n\in\mathbb{N}$. For estimating $F_1$, one proceeds as follows, 
\begin{align}
F_1&:=p_0 E \int_0^{t} |x_s^n|^{p_0-2} (x_s^n-x_{\kappa(n,s)}^n) b^{(n)}(x_{\kappa(n,s)}^n) ds \notag
\\
&\leq K E \int_0^{t} |x_s^n-x_{\kappa(n,s)}^n|^{p_0-1} |b^{(n)}(x_{\kappa(n,s)}^n)| ds \notag
\\
&+ K E \int_0^{t} |x_{\kappa(n,s)}^n|^{p_0-2}|x_s^n-x_{\kappa(n,s)}^n| |b^{(n)}(x_{\kappa(n,s)}^n)| ds \notag
\end{align}
which on using Remark \ref{rem:growth(n)} yields,  
\begin{align*}
F_1&\leq K n^{1/4}  E \int_0^{t} (1+|x_{\kappa(n,s)}^n|) E(|x_s^n-x_{\kappa(n,s)}|^{p_0-1}|\mathscr{F}_{\kappa(n,s)})  ds \notag
\\
&+n^{1/4} K E \int_0^{t} (1+|x_{\kappa(n,s)}^n|^{p_0-1}) E(|x_s^n-x_{\kappa(n,s)}^n| | \mathscr{F}_{\kappa(n,s)}) ds \notag 
\end{align*}
and then the application of Lemma \ref{lem:one-step:c:mb} gives the following estimates, 
\begin{align}
F_1&\leq K n^{-\frac{1}{2}}  E \int_0^{t} (1+|x_{\kappa(n,s)}^n|^{p_0})  ds \leq K+K\int_0^t \sup_{0 \leq r \leq s} E|x_r^n|^{p_0}ds \label{eq:F1} 
\end{align}
for any $t \in [0,T]$ and $n\in\mathbb{N}$. Moreover, $F_2$ can be estimated as follows, 
\begin{align}
F_2 &:= \frac{p_0}{2} E\int_{0}^{t} |x_s^n|^{p_0-2}\{2x_{\kappa(n,s)}^nb^{(n)}(x_{\kappa(n,s)}^n)+(p_0-1)| \Lambda_0^{(n)}(x_{\kappa(n,s)}^n)|^2\} ds \notag
\\
&\leq \frac{p_0}{2} E\int_{0}^{t} |x_s^n|^{p_0-2}\frac{2x_{\kappa(n,s)}^nb(x_{\kappa(n,s)}^n)+(p_0-1)| \sigma(x_{\kappa(n,s)}^n)|^2}{1+n^{-1}|x_{\kappa(n,s)}^n|^{4 \rho}} ds \notag
\\
&\leq K E\int_{0}^{t} |x_s^n|^{p_0-2}(1+|x_{\kappa(n,s)}^n|^2) ds \leq K+K\int_0^t \sup_{0 \leq r \leq s} E|x_r^n|^{p_0}ds \label{eq:F2}
\end{align}
for any $t \in [0,T]$ and $n\in\mathbb{N}$. Furthermore, one uses Young's inequality and an elementary inequality of stochastic integral to obtain, 
\begin{align}
F_3 & := K E\int_{0}^{t} |x_s^n|^{p_0-2}\Big|\int_{\kappa(n,s)}^s \sum_{k=1}^{m}
\Lambda_1^{(n),k}(x_{\kappa(n,r)}^n)dw_r^k\Big|^2ds \notag
\\
&\leq K E\int_{0}^{t} |x_s^n|^{p_0} ds + K E\int_{0}^{t} \Big|\int_{\kappa(n,s)}^s \sum_{k=1}^{m}
\Lambda_1^{(n),k}(x_{\kappa(n,r)}^n)dw_r^k\Big|^{p_0}ds \notag
\\
&\leq K E\int_{0}^{t} |x_s^n|^{p_0} ds + K n^{-\frac{p_0}{2}+1}E\int_{0}^{t} \int_{\kappa(n,s)}^s \Big|\sum_{k=1}^{m}
\Lambda_1^{(n),k}(x_{\kappa(n,r)}^n)\Big|^{p_0} dr ds \notag
\end{align}
and then the application of Remark \ref{rem:growth(n)} implies, 
\begin{align}
F_3 \leq K+ K \int_{0}^{t} \sup_{0 \leq r \leq s}E|x_r^n|^{p_0} ds \label{eq:F3}
\end{align}
for any $t\in [0,T]$ and $n\in\mathbb{N}$. For $F_4$, one writes 
\begin{align}
F_4&:= K E\int_{0}^{t} |x_s^n|^{p_0-2}\Big|\int_{\kappa(n,s)}^s\int_Z\Lambda_2^{(n)}(x_{\kappa(n,r)}^n,z) \tilde{N}(dr,dz)\Big|^2 ds \notag
\\
&\leq K E\int_{0}^{t} n^{\frac{p_0-2}{p_0}} |x_s^n-x_{\kappa(n,s)}^n|^{p_0-2} n^{-\frac{p_0-2}{p_0}}\Big|\int_{\kappa(n,s)}^s\int_Z\Lambda_2^{(n)}(x_{\kappa(n,r)}^n,z) \tilde{N}(dr,dz)\Big|^2 ds \notag
\\
&+K E\int_{0}^{t} |x_{\kappa(n,s)}^n|^{p_0-2}\Big|\int_{\kappa(n,s)}^s\int_Z\Lambda_2^{(n)}(x_{\kappa(n,r)}^n,z) \tilde{N}(dr,dz)\Big|^2  ds \notag
\end{align}
and then due to Young's inequality and an elementary inequality of stochastic integral, it follows that, 
\begin{align}
F_4&\leq  K n \int_{0}^{t}  E|x_s^n-x_{\kappa(n,s)}^n|^{p_0} ds \notag 
\\
&+  K n^{-\frac{p_0-2}{2}-\frac{p_0}{2}+1}E \int_{0}^{t}\int_{\kappa(n,s)}^s\int_Z|\Lambda_2^{(n)}(x_{\kappa(n,r)}^n,z)|^{p_0} \nu(dz)dr ds \notag
\\
& +  K n^{-\frac{p_0-2}{2}}E \int_{0}^{t}\int_{\kappa(n,s)}^s\int_Z|\Lambda_2^{(n)}(x_{\kappa(n,r)}^n,z)|^{p_0} \nu(dz) dr ds \notag
\\
&+K E\int_{0}^{t} |x_{\kappa(n,s)}^n|^{p_0-2}\int_{\kappa(n,s)}^s\int_Z|\Lambda_2^{(n)}(x_{\kappa(n,r)}^n,z)|^2 \nu(dz)dr ds \notag
\end{align}
 which on the application of Remark \ref{rem:growth(n)} and Corollary  \ref{cor:one-step:uc:mb} yields the following estimates, 
 \begin{align}
 F_4 \leq K + K \int_0^t \sup_{0 \leq r \leq s} E|x_r^n|^{p_0} ds  \label{eq:F4} 
 \end{align}
 for any $t \in [0,T]$ and $n\in\mathbb{N}$. For estimating $F_5$, one proceeds as follows,  
\begin{align}
F_5&:=K E\int_{0}^{t} |x_s^n|^{p_0-2}\Big|\int_{\kappa(n,s)}^s\int_Z\Lambda_3^{(n)}(x_{\kappa(n,r)}^n,z) N(dr,dz)\Big|^2 ds \notag
\\
&\leq K E\int_{0}^{t} |x_s^n|^{p_0-2}\Big|\int_{\kappa(n,s)}^s\int_Z\Lambda_3^{(n)}(x_{\kappa(n,r)}^n,z)\tilde{N}(dr,dz)\Big|^2 ds \notag
\\
&+K E\int_{0}^{t} |x_s^n|^{p_0-2}\Big|\int_{\kappa(n,s)}^s\int_Z\Lambda_3^{(n)}(x_{\kappa(n,r)}^n,z)\nu(dz)dr\Big|^2 ds \notag
\\
&\leq K E\int_{0}^{t} n^\frac{p_0-2}{p_0}|x_s^n-x_{\kappa(n,s)}^n|^{p_0-2} n^{-\frac{p_0-2}{p_0}}\Big|\int_{\kappa(n,s)}^s\int_Z\Lambda_3^{(n)}(x_{\kappa(n,r)}^n,z) \tilde{N}(dr,dz)\Big|^2 ds \notag
\\
& + K E\int_{0}^{t} |x_{\kappa(n,s)}^n|^{p_0-2}\Big|\int_{\kappa(n,s)}^s\int_Z\Lambda_3^{(n)}(x_{\kappa(n,r)}^n,z) \tilde{N}(dr,dz)\Big|^2 ds \notag
\\
&+Kn^{-1} E\int_{0}^{t} |x_s^n|^{p_0-2} \int_{\kappa(n,s)}^s\int_Z|\Lambda_3^{(n)}(x_{\kappa(n,r)}^n,z)|^2 \nu(dz)dr ds \notag
\end{align} 
which on the application of Young's inequality and an elementary inequality of stochastic integrals give, 
\begin{align}
F_5 & \leq K n E\int_{0}^{t} |x_s^n-x_{\kappa(n,s)}^n|^{p_0} ds + K n^{-\frac{p_0-2}{2}-\frac{p_0}{2}+1} E\int_{0}^{t} \int_{\kappa(n,s)}^s\int_Z|\Lambda_3^{(n)}(x_{\kappa(n,r)}^n,z)|^{p_0} \nu(dz)dr ds \notag
\\
&+ K n^{-\frac{p_0-2}{2}} E\int_{0}^{t} \int_{\kappa(n,s)}^s\int_Z|\Lambda_3^{(n)}(x_{\kappa(n,r)}^n,z)|^{p_0} \nu(dz)dr  ds  \notag
\\
&+ K E\int_{0}^{t} |x_{\kappa(n,s)}^n|^{p_0-2}\int_{\kappa(n,s)}^s\int_Z|\Lambda_3^{(n)}(x_{\kappa(n,r)}^n,z)|^2 \nu(dz) dr  ds \notag
\\
&+Kn^{-1} E\int_{0}^{t} |x_s^n|^{p_0-2} \int_{\kappa(n,s)}^s\int_Z|\Lambda_3^{(n)}(x_{\kappa(n,r)}^n,z)|^2 \nu(dz)dr ds \notag
\end{align}
and then due to Remark \ref{rem:growth(n)}, Corollary \ref{cor:one-step:uc:mb} and Young's inequality, one obtains, 
\begin{align}
F_5\leq K + K \int_0^t \sup_{0\leq r \leq s}E|x_r^n|^{p_0} ds \label{eq:F5}
\end{align}
for any $t\in[0,T]$ and $n\in\mathbb{N}$.  In order to estimate $F_6$, one uses Remark \ref{rem:growth(n)} to obtain the following,  
 \begin{align}
 F_6& := KE\int_{0}^{t} |x_s^n|^{p_0-2}\sum_{i=1}^d\sum_{j=1}^m\Lambda_0^{(n),ij}(x_{\kappa(n,s)}^n)\int_{\kappa(n,s)}^s\sum_{k=1}^m\Lambda_1^{(n),k,ij}(x_{\kappa(n,r)}^n)dw_r^k ds \notag
  \\
 & \leq KE\int_{0}^{t} |x_s^n|^{p_0-2}n^{1/8}(1+|x_{\kappa(n,s)}^n|)\Big|\int_{\kappa(n,s)}^s\sum_{k=1}^m\Lambda_1^{(n),k}(x_{\kappa(n,r)}^n)dw_r^k\Big| ds \notag
 \\
 & \leq KE\int_{0}^{t} |x_s^n-x_{\kappa(n,s)}^n|^{p_0-2}n^{\frac{p_0-2}{p_0}}(1+|x_{\kappa(n,s)}^n|)n^{\frac{1}{8}-\frac{p_0-2}{p_0}}\Big|\int_{\kappa(n,s)}^s\sum_{k=1}^m\Lambda_1^{(n),k}(x_{\kappa(n,r)}^n)dw_r^k\Big| ds \notag
 \\
&+ KE\int_{0}^{t} (1+|x_{\kappa(n,s)}^n|^{p_0-1})n^{\frac{1}{8}}\Big|\int_{\kappa(n,s)}^s\sum_{k=1}^m\Lambda_1^{(n),k}(x_{\kappa(n,r)}^n)dw_r^k\Big| ds \notag 
 \end{align}
 which on the application of Young's inequality and an elementary inequality of stochastic integral gives the following estimates, 
 \begin{align}
 F_6 & \leq K n E\int_{0}^{t} |x_s^n-x_{\kappa(n,s)}^n|^{p_0}ds \notag
 \\
 & +K n^{\frac{p_0}{16}-\frac{p_0-2}{2}-\frac{p_0}{4}+1}E\int_0^t (1+|x_{\kappa(n,s)}^n|)^\frac{p_0}{2}\int_{\kappa(n,s)}^s\Big|\sum_{k=1}^m\Lambda_1^{(n),k}(x_{\kappa(n,r)}^n)\Big|^\frac{p_0}{2} dr ds \notag
 \\
&+ KE\int_{0}^{t} (1+|x_{\kappa(n,s)}^n|^{p_0-1})n^{\frac{1}{8}}\Big(\int_{\kappa(n,s)}^s\Big|\sum_{k=1}^m\Lambda_1^{(n),k}(x_{\kappa(n,r)}^n)\Big|^2 dr\Big)^\frac{1}{2} ds \notag
 \end{align}
 and then due to Remark \ref{rem:growth(n)} and Corollary \ref{cor:one-step:uc:mb}, one obtains, 
 \begin{align}
 F_6\leq K+ K\int_0^t \sup_{0 \leq r \leq s}E|x_r^n|^{p_0}ds \label{eq:F6}
 \end{align}
 for any $t \in [0,T]$ and $n\in\mathbb{N}$. Similarly, for estimating $F_7$, one writes, 
 \begin{align}
 F_7&:=KE\int_{0}^{t} |x_s^n|^{p_0-2}\sum_{i=1}^d\sum_{j=1}^m\Lambda_0^{(n),ij}(x_{\kappa(n,s)}^n)\int_{\kappa(n,s)}^s\int_Z\Lambda_2^{(n),ij}(x_{\kappa(n,r)}^n,z)\tilde{N}(dr,dz) ds \notag
 \\
 \leq & KE\int_{0}^{t} n^\frac{p_0-2}{p_0}|x_s^n-x_{\kappa(n,s)}^n|^{p_0-2}n^{\frac{1}{8}-\frac{p_0-2}{p_0}}(1+|x_{\kappa(n,s)}^n|)\Big|\int_{\kappa(n,s)}^s\int_Z\Lambda^{(n)}_2(x_{\kappa(n,r)}^n,z)\tilde{N}(dr,dz)\Big| ds \notag 
 \\
 &+KE\int_{0}^{t} (1+|x_{\kappa(n,s)}^n|^{p_0-1})n^\frac{1}{8}\Big|\int_{\kappa(n,s)}^s\int_Z\Lambda^{(n)}_2(x_{\kappa(n,r)}^n,z)\tilde{N}(dr,dz)\Big| ds \notag  
 \end{align}
 and then one uses Young's inequality and an elementary inequality of stochastic integral to obtain, 
\begin{align}
F_7  &\leq K n E\int_{0}^{t} |x_s^n-x_{\kappa(n,s)}^n|^{p_0} ds \notag
\\
& + K n^{\frac{p_0}{16}-\frac{p_0-2}{2}-\frac{p_0}{4}+1}E\int_{0}^{t}(1+|x_{\kappa(n,s)}^n|)^\frac{p_0}{2}\int_{\kappa(n,s)}^s\int_Z|\Lambda_2^{(n)}(x_{\kappa(n,r)}^n,z)|^\frac{p_0}{2}\nu(dz)dr ds \notag 
\\
& + K n^{\frac{p_0}{16}-\frac{p_0-2}{2}}E\int_{0}^{t}(1+|x_{\kappa(n,s)}^n|)^\frac{p_0}{2}\int_{\kappa(n,s)}^s\int_Z|\Lambda_2^{(n)}(x_{\kappa(n,r)}^n,z)|^\frac{p_0}{2}\nu(dz)dr ds \notag 
 \\
 &+KE\int_{0}^{t} (1+|x_{\kappa(n,s)}^n|^{p_0-1})n^\frac{1}{8}\Big(\int_{\kappa(n,s)}^s\int_Z|\Lambda_2^{(n)}(x_{\kappa(n,r)}^n,z)|^2\nu(dz) dr\Big)^\frac{1}{2} ds \notag  
\end{align} 
which due to Remark \ref{rem:growth(n)} and Corollary \ref{cor:one-step:uc:mb} yields, 
\begin{align}
F_7 \leq K+K\int_0^t \sup_{0\leq r \leq s} E|x_r^n| ds \label{eq:F7} 
\end{align}
for any $t \in [0,T]$ and $n\in\mathbb{N}$. Further, due to Remark \ref{rem:growth(n)}, one has
\begin{align}
F_8& :=KE\int_{0}^{t} |x_s^n|^{p_0-2}\sum_{i=1}^d\sum_{j=1}^m\Lambda_0^{(n),ij}(x_{\kappa(n,s)}^n)\int_{\kappa(n,s)}^s\int_Z\Lambda_3^{(n),ij}(x_{\kappa(n,r)}^n,z)N(dr,dz) ds \notag
\\
&=KE\int_{0}^{t} |x_s^n|^{p_0-2}\sum_{i=1}^d\sum_{j=1}^m\Lambda_0^{(n),ij}(x_{\kappa(n,s)}^n)\int_{\kappa(n,s)}^s\int_Z\Lambda_3^{(n),ij}(x_{\kappa(n,r)}^n,z)\tilde{N}(dr,dz) ds \notag
\\
&+KE\int_{0}^{t} |x_s^n|^{p_0-2}\sum_{i=1}^d\sum_{j=1}^m\Lambda_0^{(n),ij}(x_{\kappa(n,s)}^n)\int_{\kappa(n,s)}^s\int_Z\Lambda_3^{(n),ij}(x_{\kappa(n,r)}^n,z)\nu(dz)dr ds \notag
\\
& \leq  KE\int_{0}^{t} n^\frac{p_0-2}{p_0}|x_s^n-x_{\kappa(n,s)}^n|^{p_0-2} n^{\frac{1}{8}-\frac{p_0-2}{p_0}}(1+|x_{\kappa(n,s)}^n|)\Big|\int_{\kappa(n,s)}^s\int_Z\Lambda_3^{(n)}(x_{\kappa(n,r)}^n,z)\tilde{N}(dr,dz)\Big| ds \notag
\\
& + KE\int_{0}^{t} |x_{\kappa(n,s)}^n|^{p_0-2} n^\frac{1}{8}(1+|x_{\kappa(n,s)}^n|)\Big|\int_{\kappa(n,s)}^s\int_Z\Lambda_3^{(n)}(x_{\kappa(n,r)}^n,z)\tilde{N}(dr,dz)\Big| ds \notag
\\
&+KE\int_{0}^{t} |x_s^n|^{p_0-2}n^\frac{1}{8}(1+|x_{\kappa(n,s)}^n|)\int_{\kappa(n,s)}^s\int_Z|\Lambda_3^{(n)}(x_{\kappa(n,r)}^n,z)|\nu(dz)dr ds \notag
\end{align} 
which on applying Young's inequality and an elementary inequality of stochastic integrals gives, 
\begin{align}
F_8 & \leq Kn E\int_{0}^{t} |x_s^n-x_{\kappa(n,s)}^n|^{p_0} ds \notag
\\
&+  K n^{\frac{p_0}{16}-\frac{p_0-2}{2}-\frac{p_0}{4}+1} E\int_{0}^{t} (1+|x_{\kappa(n,s)}^n|)^\frac{p_0}{2}\int_{\kappa(n,s)}^s\int_Z\Lambda_3^{(n)}(x_{\kappa(n,r)}^n,z)|^\frac{p_0}{2}\nu(dz) dr ds \notag
\\
&+  K n^{\frac{p_0}{16}-\frac{p_0-2}{2}} E\int_{0}^{t} (1+|x_{\kappa(n,s)}^n|)^\frac{p_0}{2}\int_{\kappa(n,s)}^s\int_Z\Lambda_3^{(n)}(x_{\kappa(n,r)}^n,z)|^\frac{p_0}{2}\nu(dz) dr ds \notag
\\
& + KE\int_{0}^{t} |x_{\kappa(n,s)}^n|^{p_0-2} n^\frac{1}{8}(1+|x_{\kappa(n,s)}^n|)\Big(\int_{\kappa(n,s)}^s\int_Z|\Lambda_3^{(n)}(x_{\kappa(n,r)}^n,z)|^2 \nu(dz) dr\Big)^\frac{1}{2} ds \notag 
\\
&+KE\int_{0}^{t} |x_s^n|^{p_0-2}n^\frac{1}{8}(1+|x_{\kappa(n,s)}^n|) \int_{\kappa(n,s)}^s\int_Z|\Lambda_3^{(n)}(x_{\kappa(n,r)}^n,z)|\nu(dz)dr ds \notag
\end{align}
and then the application of Remark \ref{rem:growth(n)} and Corollary \ref{cor:one-step:uc:mb} implies 
\begin{align}
F_8 & \leq K+K\int_0^t \sup_{0 \leq r \leq s}E|x_r^n|^{p_0} ds \label{eq:F8} 
\end{align} 
for any $t\in [0,T]$ and $n\in\mathbb{N}$. For estimating $F_9$, one uses an elementary inequality of stochastic integral and obtains the following,  
\begin{align}
F_9&:=K E\int_{0}^{t} \int_{Z}  \Big|\int_{\kappa(n,s)}^s\sum_{k=1}^m\Gamma_1^{(n),k}(x_{\kappa(n,r)}^n,z)dw_r^j\Big|^{p_0}\nu(dz)ds \notag
\\
&\leq K n^{-\frac{p_0}{2}+1}E\int_{0}^{t} \int_{Z}  \int_{\kappa(n,s)}^s|\Gamma_1^{(n)}(x_{\kappa(n,r)}^n,z)|^{p_0}dr\nu(dz)ds \notag
\end{align}
which on the application of Remark \ref{rem:growth(n)} gives, 
\begin{align}
F_9\leq K+K\int_0^t \sup_{0 \leq r \leq s}E|x_r^n|^{p_0} ds \label{eq:F9} 
\end{align}
for any $t \in [0,T]$ and $n\in\mathbb{N}$. Again, due to an elementary inequality of stochastic integral, one obtains the following estimates, 
\begin{align}
F_{10}& :=K E\int_{0}^{t} \int_{Z}  \Big|\int_{\kappa(n,s)}^s\int_Z\Gamma_2(x_{\kappa(n,r)}^n,z,z_1)\tilde{N}(dr,dz_1)\Big|^{p_0}\nu(dz)ds \notag 
\\
&\leq K n^{-\frac{p_0}{2}+1} E\int_{0}^{t} \int_{Z}  \int_{\kappa(n,s)}^s\int_Z |\Gamma_2(x_{\kappa(n,r)}^n,z,z_1)|^{p_0}dr \nu(dz_1)\nu(dz)ds \notag
\\
& +K  E\int_{0}^{t} \int_{Z}  \int_{\kappa(n,s)}^s\int_Z |\Gamma_2(x_{\kappa(n,r)}^n,z,z_1)|^{p_0}dr \nu(dz_1)\nu(dz)ds \notag
\end{align} 
and then Remark \ref{rem:growth(n)} gives the following estimates,
\begin{align}
F_{10}\leq K+K\int_0^t \sup_{0 \leq r \leq s} E|x_r^n|^{p_0} ds \label{eq:F10}
\end{align} 
for any $t \in [0,T]$ and $n\in\mathbb{N}$. For estimating $F_{11}$, one uses an elementary inequality of stochastic integrals to get, 
\begin{align}
F_{11}&:=K E\int_{0}^{t} \int_{Z}  \Big|\int_{\kappa(n,s)}^s\int_Z\Gamma_3(x_{\kappa(n,r)}^n,z,z_1)N(dr,dz_1)\Big|^{p_0}\nu(dz)ds \notag
\\
&\leq K E\int_{0}^{t} \int_{Z}  \Big|\int_{\kappa(n,s)}^s\int_Z\Gamma_3(x_{\kappa(n,r)}^n,z,z_1)\tilde{N}(dr,dz_1)\Big|^{p_0}\nu(dz)ds \notag
\\
&+ K E\int_{0}^{t} \int_{Z}  \Big|\int_{\kappa(n,s)}^s\int_Z\Gamma_3(x_{\kappa(n,r)}^n,z,z_1) \nu(dz_1) dr\Big|^p\nu(dz)ds \notag
\\
& + K n^{-\frac{p}{2}+1}E\int_{0}^{t} \int_{Z}  \int_{\kappa(n,s)}^s\int_Z|\Gamma_3(x_{\kappa(n,r)}^n,z,z_1)|^p \nu(dz_1)dr \nu(dz)ds \notag
\\
& \leq K E\int_{0}^{t} \int_{Z}  \int_{\kappa(n,s)}^s\int_Z|\Gamma_3(x_{\kappa(n,r)}^n,z,z_1)|^p \nu(dz_1)dr \nu(dz)ds \notag
\\
&+ K n^{-p+1} E\int_{0}^{t} \int_{Z}  \int_{\kappa(n,s)}^s\int_Z|\Gamma_3(x_{\kappa(n,r)}^n,z,z_1)|^p \nu(dz_1) dr\nu(dz)ds \notag
\end{align}
and then the application Remark \ref{rem:growth(n)} gives,
\begin{align}
F_{11} \leq K+K\int_0^t \sup_{0 \leq r \leq s} E|x_r^n|^{p_0} ds \label{eq:F11}
\end{align} 
for any $t \in [0,T]$ and $n\in\mathbb{N}$.
Therefore, on substituting all the estimates from equations \eqref{eq:F1} to \eqref{eq:F11} in equation \eqref{eq:F1+F11} yields,
\begin{align}
\sup_{0 \leq r \leq t} E|x_r^n|^{p_0} \leq K+K\int_0^t \sup_{0 \leq r \leq s} E|x_r^n|^{p_0} ds 
\end{align} 
for any $t \in [0,T]$ and $n\in\mathbb{N}$. The Gronwall's inequality completes the proof. 
\end{proof}
%------------------------------------------------------------------
\section{Rate of Convergence} \label{sec:rate}
%------------------------------------------------------------------
Before showing the main result of this article i.e. Theorem \ref{thm:main:thm}, one requires to prove the following lemmas.  %-----------------------------------------------------------------
%                Lemma : one-step : Rate
%-----------------------------------------------------------------
\begin{lem} \label{lem:one-step:rate:c}
Let Assumptions A-\ref{as:sde:ini} to A-\ref{as:sde:lip:der} be satisfied. Then, the following holds, 
$$
E\big(|x_t^n-x_{\kappa(n,t)}^n|^p|\mathscr{F}_{\kappa(n,t)}) \leq K n^{-1} (1+|x_{\kappa(n,t)}^n|)^p
$$
almost surely for any $2 \leq p \leq p_0/(2\rho+2)$ and $n\in\mathbb{N}$ where the positive constant $K$ does not depend on $n$.
\end{lem}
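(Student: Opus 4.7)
The plan is to refine the argument of Lemma \ref{lem:one-step:c:mb} by replacing its use of the $n$-dependent polynomial bounds of Remark \ref{rem:growth(n)} with the $n$-independent ones wherever the trade yields a better power of $n$. This removes the spurious $n^{p/8}$ inflation inherited through Lemma \ref{lem:sig(n):c}, which was the bottleneck forcing $p\geq 8/3$ in Lemma \ref{lem:one-step:c:mb}; the price is a higher polynomial factor in $|x_{\kappa(n,t)}^n|$, to be controlled by the moment bound Lemma \ref{lem:mb:scheme} via the restriction $p\leq p_0/(2\rho+2)$.

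Since $\kappa(n,s)=\kappa(n,t)$ for every $s\in[\kappa(n,t),t)$, the Milstein-type scheme \eqref{eq:scheme} gives
\begin{align*}
x_t^n-x_{\kappa(n,t)}^n &= b^{(n)}(x_{\kappa(n,t)}^n)(t-\kappa(n,t))+\int_{\kappa(n,t)}^t\sigma^{(n)}(s,x_{\kappa(n,t)}^n)\,dw_s\\
&\quad+\int_{\kappa(n,t)}^t\!\!\int_Z\gamma^{(n)}(s,x_{\kappa(n,t)}^n,z)\,\tilde{N}(ds,dz).
\end{align*}
Taking the $p$-th conditional moment and applying the elementary inequality, H\"older's inequality, and BDG produces a drift piece bounded by $K n^{-p}|b^{(n)}(x_{\kappa(n,t)}^n)|^p$, a Brownian piece bounded by $K n^{-p/2+1}E(\int|\sigma^{(n)}|^p\,ds|\mathscr{F}_{\kappa(n,t)})$, and Poisson pieces (compensated and not).

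For the drift, use $|b^{(n)}(x)|\leq |b(x)|\leq K(1+|x|)^{\rho+1}$ from Remark \ref{rem:growth(n)}; the prefactor $n^{-p}$ is $\leq n^{-1}$ since $p\geq 2$. For the Brownian piece, expand $\sigma^{(n)}$ via \eqref{eq:sigma(n)} and apply BDG once more to its inner stochastic integrals, which run over subintervals of length $\leq n^{-1}$; using the $n$-independent bounds on $\Lambda_0^{(n)},\Lambda_1^{(n)},\Lambda_2^{(n)},\Lambda_3^{(n)}$ from Remark \ref{rem:growth(n)} yields a conditional bound on $E(|\sigma^{(n)}|^p|\mathscr{F}_{\kappa(n,t)})$ that is polynomial in $(1+|x_{\kappa(n,t)}^n|)$ with no $n$-inflation, so the remaining prefactor $n^{-p/2+1}\cdot n^{-1}=n^{-p/2}$ is $\leq n^{-1}$ for $p\geq 2$. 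The Poisson pieces are handled analogously, expanding $\gamma^{(n)}$ via \eqref{eq:gamma(n)} and using the $n$-independent bounds for $\Gamma_1^{(n)},\Gamma_2,\Gamma_3$ from Remark \ref{rem:growth(n)} together with Assumption A-\ref{as:sde:growth:gam}.

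The main technical difficulty is the bookkeeping of the polynomial factors in $(1+|x_{\kappa(n,t)}^n|)$ that accumulate from Remark \ref{rem:growth(n)}: the largest exponent produced equals $(2\rho+2)p$, coming from $\Lambda_1^{(n)}$, and the constraint $p\leq p_0/(2\rho+2)$ is precisely what is needed to keep this exponent within the moment range controlled by Lemma \ref{lem:mb:scheme}. Collecting the contributions then yields the claimed conditional bound $K n^{-1}(1+|x_{\kappa(n,t)}^n|)^p$.
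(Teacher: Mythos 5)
The paper offers no argument here beyond the sentence ``the proof follows by using the Milstein-type scheme,'' so there is nothing to compare step by step; your proposal is the natural way to fill that gap, and its core idea --- trading the $n^{p/8}$-inflated entries of the $\min$ in Remark \ref{rem:growth(n)} for the $n$-free polynomial ones, so that the Burkholder--Davis--Gundy prefactor becomes $n^{-p/2+1}\cdot n^{-1}=n^{-p/2}\le n^{-1}$ for $p\ge 2$ --- is exactly what removes the $n^{-3p/8}$ obstruction of Lemma \ref{lem:one-step:c:mb} on the range $2\le p<8/3$. The bookkeeping of the drift, Brownian and Poisson pieces is correct.

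Your last sentence, however, contains a genuine gap. The estimate your argument actually delivers is
\begin{align*}
E\big(|x_t^n-x_{\kappa(n,t)}^n|^{p}\big|\mathscr{F}_{\kappa(n,t)}\big)\le Kn^{-1}\big(1+|x_{\kappa(n,t)}^n|\big)^{(2\rho+2)p},
\end{align*}
not the stated bound with exponent $p$. This is an almost-sure inequality, and Lemma \ref{lem:mb:scheme} controls only unconditional moments; it cannot be invoked to lower the exponent of $|x_{\kappa(n,t)}^n|$ pointwise, so the constraint $p\le p_0/(2\rho+2)$ does not convert the displayed bound into the claimed one. Nor can interpolation between the two entries of the $\min$ rescue the exponent: at $p=2$ the requirement $n^{-p/2+(1-\theta)p/8}\le n^{-1}$ forces $\theta=1$, i.e.\ the full polynomial bound, and already the single term $(t-\kappa(n,t))\,|\Lambda_0^{(n)}(x_{\kappa(n,t)}^n)|^2\sim n^{-1}(1+|x_{\kappa(n,t)}^n|)^{\rho+2}$ exceeds $Kn^{-1}(1+|x_{\kappa(n,t)}^n|)^{2}$ on the positive-probability event that $|x_{\kappa(n,t)}^n|$ is large but below the taming threshold, whenever $\sigma$ is genuinely superlinear. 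So the literal statement with exponent $p$ is not reachable by this route and appears to be a misstatement of the lemma. What you do prove is precisely what the restriction $p\le p_0/(2\rho+2)$ is designed for, and it suffices verbatim for Corollary \ref{cor:one-step:rate:uc} and for every subsequent application (the terms $G_3$, $G_7$, $H_4$, $H_8$, $H_{13}$, and so on, all pair the increment with a polynomial weight before taking full expectations); you should therefore state and carry forward the conditional bound with exponent $(2\rho+2)p$ rather than assert the exponent $p$.
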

\begin{proof}
The proof follows by using the Milstein-type scheme defined in equation \eqref{eq:scheme}. 
\end{proof}
%-----------------------------------------------------------------
\begin{cor} \label{cor:one-step:rate:uc}
Let Assumptions A-\ref{as:sde:ini} to A-\ref{as:sde:lip:der} be satisfied. Then, the following holds, 
$$
\sup_{0 \leq t \leq T}E|x_t^n-x_{\kappa(n,t)}^n|^p \leq K n^{-1}
$$
for any $2 \leq p \leq p_0/(2\rho+2)$ and $n\in\mathbb{N}$ where the positive constant $K$ does not depend on $n$.
\end{cor}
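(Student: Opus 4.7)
The plan is to derive the unconditional estimate directly from the conditional estimate in Lemma \ref{lem:one-step:rate:c} by means of the tower property of conditional expectation, after which the only remaining ingredient is a uniform-in-$n$ bound on $E|x_{\kappa(n,t)}^n|^p$, which is already available from the moment bound for the scheme (Lemma \ref{lem:mb:scheme}).

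More precisely, I would first take total expectation on both sides of the inequality
$$
E\bigl(|x_t^n-x_{\kappa(n,t)}^n|^p \,\big|\, \mathscr{F}_{\kappa(n,t)}\bigr) \leq K n^{-1} \bigl(1+|x_{\kappa(n,t)}^n|\bigr)^p
$$
and apply the tower property to obtain
$$
E|x_t^n-x_{\kappa(n,t)}^n|^p \leq K n^{-1}\, E\bigl(1+|x_{\kappa(n,t)}^n|\bigr)^p.
$$
Then, using the elementary inequality $(1+a)^p \leq 2^{p-1}(1+a^p)$ for $a\geq 0$, I would reduce this to controlling $E|x_{\kappa(n,t)}^n|^p$ uniformly in $n\in\mathbb{N}$ and $t\in[0,T]$. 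Because the hypothesis requires $p\leq p_0/(2\rho+2) \leq p_0$, Lemma \ref{lem:mb:scheme} yields
$$
\sup_{n\in\mathbb{N}}\sup_{0\leq t\leq T} E|x_{\kappa(n,t)}^n|^p \leq \sup_{n\in\mathbb{N}}\sup_{0\leq t\leq T} \bigl(E|x_t^n|^{p_0}\bigr)^{p/p_0} \leq K,
$$
where Jensen's (or H\"older's) inequality handles the case $p<p_0$. Taking supremum over $t\in[0,T]$ in the resulting bound then gives the claimed estimate.

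There is no genuine obstacle here: the result is an immediate unconditional version of Lemma \ref{lem:one-step:rate:c}, and the only point that deserves attention is the compatibility of the exponent $p$ with the available moment bound. The constraint $p \leq p_0/(2\rho+2)$ inherited from the lemma is more restrictive than what is needed for this corollary (which only uses $p\leq p_0$), so the moment estimate is comfortably applicable and the constant $K$ depends only on the quantities listed in Lemma \ref{lem:mb:scheme} and on $L,T,\rho$.
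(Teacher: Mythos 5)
Your argument is correct and is essentially the route the paper intends: condition, apply the tower property, and absorb $E(1+|x_{\kappa(n,t)}^n|)^p$ via the uniform moment bound for the scheme together with Jensen's inequality for $p<p_0$. Note that the paper's one-line proof cites Lemma \ref{lem:sde:moment:bound} (the moment bound for the true solution), which appears to be a typo for Lemma \ref{lem:mb:scheme}; you have correctly invoked the latter, since the quantity to be controlled involves $x^n$ rather than $x$.
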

\begin{proof}
The proof follows due to Lemmas [\ref{lem:one-step:rate:c}, \ref{lem:sde:moment:bound}].
\end{proof}
%-----------------------------------------------------------------
%                Lemma : gamma - tilde gamma: Rate
%-----------------------------------------------------------------
\begin{lem} \label{lem:gamma-gamma(n):rate}
Let Assumptions A-\ref{as:sde:ini} to A-\ref{as:sde:lip:der} be satisfied. Then, the following holds, 
\begin{align*}
\sup_{0 \leq t \leq T}E\int_Z|\gamma(x_t^n, z) -  \gamma^{(n)}(t,x_{\kappa(n,t)}^n, z)|^2 \nu(dz) \leq K n^{-2}
\end{align*}
for any $n\in\mathbb{N}$ where constant $K>0$ does not depend on $n$. 
\end{lem}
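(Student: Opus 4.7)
The plan is to apply Itô's formula to $\gamma(x_s^n,z)$ on the interval $[\kappa(n,t),t]$, treating $z\in Z$ as a parameter, and to compare the resulting stochastic expansion against the explicit form of $\gamma^{(n)}(t,x_{\kappa(n,t)}^n,z)$ given in \eqref{eq:gamma(n)}. The Milstein-correction terms $\Gamma_1^{(n)},\Gamma_2,\Gamma_3$ are designed precisely as the leading-order Itô integrands frozen at the discretization node $x_{\kappa(n,t)}^n$, so after subtraction only remainder terms survive. The task is then to bound each such remainder by $Kn^{-2}$ in $L^2(\Omega\times Z)$.

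Writing $t_0:=\kappa(n,t)$, I decompose
\begin{align*}
\gamma(x_t^n,z)-\gamma^{(n)}(t,x_{t_0}^n,z) = A_1(z)+A_2(z)+A_3(z),
\end{align*}
where $A_1$ collects the drift-type terms (the Itô drift $\int_{t_0}^t\Delta\gamma(x_s^n,z)b^{(n)}(x_{t_0}^n)\,ds$, the second-order Itô correction from the continuous martingale part of $x^n$, and the compensator $\int_{t_0}^t\int_Z\Gamma_2(x_{t_0}^n,z,z_1)\nu(dz_1)\,ds$ produced by rewriting $\Gamma_2\tilde N+\Gamma_3 N = [\gamma(x_{t_0}^n+\gamma(x_{t_0}^n,z_1),z)-\gamma(x_{t_0}^n,z)]N-\Gamma_2\nu(dz_1)\,ds$); $A_2$ is the Brownian mismatch
\begin{align*}
A_2(z)=\int_{t_0}^t\bigl[\Delta\gamma(x_s^n,z)\sigma^{(n)}(s,x_{t_0}^n)-\Delta\gamma(x_{t_0}^n,z)\Lambda_0^{(n)}(x_{t_0}^n)\bigr]\,dw_s;
\end{align*}
and $A_3$ is the Poisson mismatch $\int_{t_0}^t\int_Z\Psi(s,z,z_1)\,N(ds,dz_1)$ with integrand $\Psi=[\gamma(x_{s-}^n+\gamma^{(n)}(s,x_{t_0}^n,z_1),z)-\gamma(x_{s-}^n,z)]-[\gamma(x_{t_0}^n+\gamma(x_{t_0}^n,z_1),z)-\gamma(x_{t_0}^n,z)]$.

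For $A_1$, Cauchy--Schwarz in time produces a factor $(t-t_0)^2\le n^{-2}$, while the $z$-integrated quantities $\int_Z|\Delta\gamma|^2\nu(dz),\int_Z|\partial^2\gamma/\partial x^u\partial x^v|^2\nu(dz)\le K$ and $\int_Z|\gamma|^2\nu(dz)\le K(1+|x|)^2$ from Remark \ref{rem:growth}, combined with Lemma \ref{lem:mb:scheme} and the growth estimates for $b^{(n)},\sigma^{(n)}$ in Remark \ref{rem:growth(n)}, close the estimate. For $A_2$, I split the integrand as $[\Delta\gamma(x_s^n,z)-\Delta\gamma(x_{t_0}^n,z)]\Lambda_0^{(n)}(x_{t_0}^n)+\Delta\gamma(x_s^n,z)[\sigma^{(n)}(s,x_{t_0}^n)-\Lambda_0^{(n)}(x_{t_0}^n)]$ and apply Itô isometry together with Fubini in $z$. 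The first piece is handled by the Lipschitz bound $\int_Z|\Delta\gamma(x,z)-\Delta\gamma(\bar x,z)|^2\nu(dz)\le L|x-\bar x|^2$ from A-\ref{as:sde:lip:der} and the conditional one-step estimate Lemma \ref{lem:one-step:rate:c}; the second uses $\int_Z|\Delta\gamma|^2\nu(dz)\le K$ together with the fact that $\sigma^{(n)}(s,x_{t_0}^n)-\Lambda_0^{(n)}(x_{t_0}^n)$ is a sum of stochastic integrals of $\Lambda_1^{(n)},\Lambda_2^{(n)},\Lambda_3^{(n)}$ over $[t_0,s]$ whose squared $L^2$-norm is $O(n^{-1})$ by Remark \ref{rem:growth(n)}; in both cases the outer $\int_{t_0}^tds$ supplies the remaining $n^{-1}$.

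The main obstacle is $A_3$. Writing $N=\tilde N+\nu(dz_1)\,ds$, the drift contribution merges into $A_1$ and is absorbed by a second-order Taylor expansion of $\gamma(x_{t_0}^n+\gamma(x_{t_0}^n,z_1),z)$ around $x_{t_0}^n$ using the bound on the second derivatives of $\gamma$ from Remark \ref{rem:growth}. The genuinely non-trivial piece is the compensated martingale $\int_{t_0}^t\int_Z\Psi\tilde N(ds,dz_1)$; by Assumption A-\ref{as:sde:lipschitz} (the Lipschitz bound on $\gamma$ in its first argument, integrated over $z$),
\begin{align*}
\int_Z|\Psi(s,z,z_1)|^2\nu(dz)\le K|x_{s-}^n-x_{t_0}^n|^2 + K|\gamma^{(n)}(s,x_{t_0}^n,z_1)-\gamma(x_{t_0}^n,z_1)|^2,
\end{align*}
so integrating over $z_1\in Z$ (using $\nu(Z)<\infty$) reduces the estimate to Lemma \ref{lem:one-step:rate:c} and the bound $E\int_Z|\gamma^{(n)}(s,x_{t_0}^n,z_1)-\gamma(x_{t_0}^n,z_1)|^2\nu(dz_1)\le Kn^{-1}$, which follows directly from \eqref{eq:gamma(n)} together with the uniform-in-$n$ estimates on $\Gamma_1^{(n)},\Gamma_2,\Gamma_3$ in Remark \ref{rem:growth(n)}. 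Itô isometry then supplies the remaining $(t-t_0)=O(n^{-1})$, yielding the desired $O(n^{-2})$. The technical delicacy is the careful bookkeeping of the simultaneous $z,z_1$ integrations while ensuring that all constants arising through Remark \ref{rem:growth(n)} remain $n$-independent.
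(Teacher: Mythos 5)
Your proposal is correct and follows essentially the same route as the paper: apply It\^{o}'s formula to $\gamma(x_t^n,z)$ over $[\kappa(n,t),t]$, cancel the Milstein correction terms against the definition \eqref{eq:gamma(n)}, and bound each surviving remainder by $Kn^{-2}$ using Remarks \ref{rem:growth} and \ref{rem:growth(n)}, Lemma \ref{lem:mb:scheme}, the one-step estimate Lemma \ref{lem:one-step:rate:c}, and the $O(n^{-1})$ size of the inner stochastic integrals. Your grouping into $A_1,A_2,A_3$ (in particular recombining the $\Gamma_2\tilde N+\Gamma_3 N$ terms into a single $N$-integrand $\Psi$ handled by the four-point Lipschitz rearrangement under A-\ref{as:sde:lipschitz}) is only a cosmetic repackaging of the paper's finer split into $G_1,\ldots,G_{11}$.
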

\begin{proof}
By It\^{o}'s formula, one obtains
%\begin{align}
%\gamma(x_t^n,z)&=\gamma(x_{\kappa(n,t)}^n,z)+\sum_{u=1}^{d}  \int_{\kappa(n,t)}^{t}\frac{\partial \gamma(x_s^n,z)}{\partial x^{u}}b^{(n),u}(x_{\kappa(n,s)}^n)ds \notag
%\\
%&+\frac{1}{2}\sum_{u}^{d} \sum_{v=1}^{d} \sum_{j=1}^{m}\int_{\kappa(n,t)}^{t}\frac{\partial^2 \gamma(x_s^n,z)}{\partial x^{u} \partial x^{v}} \sigma^{(n),uj}(s,x_{\kappa(n,s)}^n) \sigma^{(n),vj}(s,x_{\kappa(n,s)}^n)ds \notag
%\\
%&+ \sum_{u=1}^{d}\sum_{j=1}^m \int_{\kappa(n,t)}^t \frac{\partial \gamma(x_s^n,z)}{\partial x^{u}} \sigma^{(n), uj}(s,x_{\kappa(n,s)}^n) dw_s^j \notag
%\\
%&+ \sum_{u=1}^{d} \int_{\kappa(n,t)}^t \int_Z \frac{\partial \gamma(x_s^n,z)}{\partial x^{u}} \gamma^{(n), u}(s,x_{\kappa(n,s)}^n,z_1) \tilde{N}(ds,dz_1) \notag
%\\
%+\int_{\kappa(n,t)}^t \int_Z & \Big\{\gamma(x_s^n+\gamma^{(n)}(s,x_{\kappa(n,s)}^n,z_1),z)-\gamma(x_s^{n},z)-\sum_{u=1}^d \frac{\partial \gamma(x_s^n,z)}{\partial x^{u}} \gamma^{(n),u}(s,x_{\kappa(n,s)}^n,z_1) \Big\} N(ds,dz_1) \notag
%\end{align}
%which can further be written as, 
\begin{align}
\gamma(x_t^n,z)&=\gamma(x_{\kappa(n,t)}^n,z)+\sum_{u=1}^{d}  \int_{\kappa(n,t)}^{t}\frac{\partial \gamma(x_s^n,z)}{\partial x^{u}}b^{(n),u}(x_{\kappa(n,s)}^n)ds \notag
\\
&+\frac{1}{2}\sum_{u}^{d} \sum_{v=1}^{d} \int_{\kappa(n,t)}^{t}\frac{\partial^2 \gamma(x_s^n,z)}{\partial x^{u} \partial x^{v}} \sum_{j=1}^{m}\sigma^{(n),uj}(s,x_{\kappa(n,s)}^n) \sigma^{(n),vj}(s,x_{\kappa(n,s)}^n)ds \notag
\\
&+ \sum_{u=1}^{d} \int_{\kappa(n,t)}^t \frac{\partial \gamma(x_s^n,z)}{\partial x^{u}} \sum_{j=1}^m\frac{\sigma^{uj}(x_{\kappa(n,s)}^n)}{1+n^{-1}|x_{\kappa(n,s)}^n|^{4 \rho}} dw_s^j \notag
\\
&+ \sum_{u=1}^{d} \int_{\kappa(n,t)}^t \frac{\partial \gamma(x_s^n,z)}{\partial x^{u}} \sum_{j=1}^m\int_{\kappa(n,s)}^s \sum_{k=1}^m \Lambda_1^{(n),k,uj}(x_{\kappa(n,r)}^n)dw_r^kdw_s^j\notag
\\
&+ \sum_{u=1}^{d} \int_{\kappa(n,t)}^t \frac{\partial \gamma(x_s^n,z)}{\partial x^{u}} \sum_{j=1}^m\int_{\kappa(n,s)}^s \int_Z \Lambda_2^{(n),uj}(x_{\kappa(n,r)}^n, z_1)\tilde{N}(dr,dz_1)dw_s^j\notag
\\
&+ \sum_{u=1}^{d} \int_{\kappa(n,t)}^t \frac{\partial \gamma(x_s^n,z)}{\partial x^{u}}\sum_{j=1}^m \int_{\kappa(n,s)}^s \int_Z \Lambda_3^{(n),uj}(x_{\kappa(n,r)}^n, z_1)N(dr,dz_1)dw_s^j \notag
\\
&+ \sum_{u=1}^{d} \int_{\kappa(n,t)}^t  \frac{\partial \gamma(x_s^n,z)}{\partial x^{u}} \int_Z \gamma^{u}(x_{\kappa(n,s)}^n,z_1) \tilde{N}(ds,dz_1) \notag
\\
&+ \sum_{u=1}^{d} \int_{\kappa(n,t)}^t \frac{\partial \gamma(x_s^n,z)}{\partial x^{u}} \int_{\kappa(n,s)}^s\int_Z  \sum_{k=1}^m \Gamma_1^{(n),k,u}(x_{\kappa(n,r)}^n,z_1)dw_r^k \tilde{N}(ds,dz_1) \notag
\\
&+ \sum_{u=1}^{d} \int_{\kappa(n,t)}^t  \frac{\partial \gamma(x_s^n,z)}{\partial x^{u}} \int_{\kappa(n,s)}^s \int_Z \int_Z  \Gamma_2^{(n),u}(x_{\kappa(n,r)}^n,z_1,z_2)\tilde{N}(dr,dz_2)\tilde{N}(ds,dz_1) \notag
\\
&+ \sum_{u=1}^{d} \int_{\kappa(n,t)}^t  \frac{\partial \gamma(x_s^n,z)}{\partial x^{u}} \int_{\kappa(n,s)}^s\int_Z\int_Z  \Gamma_3^{(n),u}(x_{\kappa(n,r)}^n,z_1,z_2)N(dr,dz_2)\tilde{N}(ds,dz_1) \notag
\\
 +\int_{\kappa(n,t)}^t &\int_Z \Big\{\gamma(x_s^n+\gamma^{(n)}(s,x_{\kappa(n,s)}^n,z_1),z)-\gamma(x_s^{n},z)-\sum_{u=1}^d \frac{\partial \gamma(x_s^n,z)}{\partial x^{u}} \gamma^{(n),u}(s,x_{\kappa(n,s)}^n,z_1) \Big\} N(ds,dz_1) \notag
\end{align}
which can also be written as below, 
\begin{align}
\gamma(x_t^n,z)&=\gamma(x_{\kappa(n,t)}^n,z)+\sum_{u=1}^{d}  \int_{\kappa(n,t)}^{t}\frac{\partial \gamma(x_s^n,z)}{\partial x^{u}}b^{(n),u}(x_{\kappa(n,s)}^n)ds \notag
\\
&+\frac{1}{2}\sum_{u}^{d} \sum_{v=1}^{d} \sum_{j=1}^{m}\int_{\kappa(n,t)}^{t}\frac{\partial^2 \gamma(x_s^n,z)}{\partial x^{u} \partial x^{v}} \sigma^{(n),uj}(s,x_{\kappa(n,s)}^n) \sigma^{(n),vj}(s,x_{\kappa(n,s)}^n)ds \notag
\\
&+ \sum_{u=1}^{d}\sum_{j=1}^m \int_{\kappa(n,t)}^t \Big\{\frac{\partial \gamma(x_s^n,z)}{\partial x^{u}}-\frac{\partial \gamma(x_{\kappa(n,s)}^n,z)}{\partial x^{u}}\Big\} \frac{\sigma^{uj}(x_{\kappa(n,s)}^n)}{1+n^{-1}|x_{\kappa(n,s)}^n|^{4 \rho}} dw_s^j \notag
\\
&+  \int_{\kappa(n,t)}^t \sum_{j=1}^m \Gamma_1^{(n),j}(x_{\kappa(n,s)}^n,z) dw_s^j \notag
\\
&+ \sum_{u=1}^{d}\sum_{j=1}^m \int_{\kappa(n,t)}^t \frac{\partial \gamma(x_s^n,z)}{\partial x^{u}}\int_{\kappa(n,s)}^s \sum_{k=1}^m \Lambda_1^{(n),k,uj}(x_{\kappa(n,r)}^n)dw_r^kdw_s^j\notag
\\
&+ \sum_{u=1}^{d}\sum_{j=1}^m \int_{\kappa(n,t)}^t \frac{\partial \gamma(x_s^n,z)}{\partial x^{u}}\int_{\kappa(n,s)}^s \int_Z \Lambda_2^{(n),uj}(x_{\kappa(n,r)}^n, z_1)\tilde{N}(dr,dz_1)dw_s^j\notag
\\
&+ \sum_{u=1}^{d}\sum_{j=1}^m \int_{\kappa(n,t)}^t \frac{\partial \gamma(x_s^n,z)}{\partial x^{u}}\int_{\kappa(n,s)}^s \int_Z \Lambda_3^{(n),uj}(x_{\kappa(n,r)}^n, z_1)N(dr,dz_1)dw_s^j \notag
\\
&+ \sum_{u=1}^{d} \int_{\kappa(n,t)}^t  \Big\{\frac{\partial \gamma(x_s^n,z)}{\partial x^{u}}-\frac{\partial \gamma(x_{\kappa(n,s)}^n,z)}{\partial x^{u}}\Big\} \int_Z \gamma^{u}(x_{\kappa(n,s)}^n,z_1) \tilde{N}(ds,dz_1) \notag
\\
&+  \int_{\kappa(n,t)}^t \int_Z \Gamma_2(x_{\kappa(n,s)}^n,z,z_1) \tilde{N}(ds,dz_1) \notag
\\
&+ \sum_{u=1}^{d} \int_{\kappa(n,t)}^t  \frac{\partial \gamma(x_s^n,z)}{\partial x^{u}} \int_{\kappa(n,s)}^s\int_Z \sum_{k=1}^m \Gamma_1^{(n),k,u}(x_{\kappa(n,r)}^n,z_1)dw_r^k \tilde{N}(ds,dz_1) \notag
\\
&+ \sum_{u=1}^{d} \int_{\kappa(n,t)}^t \frac{\partial \gamma(x_s^n,z)}{\partial x^{u}} \int_{\kappa(n,s)}^s \int_Z\int_Z  \Gamma_2^{(n),u}(x_{\kappa(n,r)}^n,z_1,z_2)\tilde{N}(dr,dz_2)\tilde{N}(ds,dz_1) \notag
\\
&+ \sum_{u=1}^{d} \int_{\kappa(n,t)}^t \frac{\partial \gamma(x_s^n,z)}{\partial x^{u}} \int_{\kappa(n,s)}^s  \int_Z \int_Z  \Gamma_3^{(n),u}(x_{\kappa(n,r)}^n,z_1,z_2)N(dr,dz_2)\tilde{N}(ds,dz_1) \notag
\\
&+\int_{\kappa(n,t)}^t \int_Z  \Big\{\gamma(x_s^n+\gamma^{(n)}(s,x_{\kappa(n,s)}^n,z_1),z)-\gamma(x_s^{n},z)\notag
\\
&\qquad \qquad-\sum_{u=1}^d \frac{\partial \gamma(x_s^n,z)}{\partial x^{u}} \gamma^{(n),u}(s,x_{\kappa(n,s)}^n,z_1) \Big\} N(ds,dz_1) \notag
\\
&-\int_{\kappa(n,t)}^t \int_Z \Big\{\gamma(x_{\kappa(n,s)}^n+\gamma(x_{\kappa(n,s)}^n,z_1),z)-\gamma(x_{\kappa(n,s)}^n,z)\notag
\\
&\qquad \qquad-\sum_{u=1}^d\frac{\partial \gamma(x_{\kappa(n,s)}^n,z)}{\partial x^{u}}\gamma^{u}(x_{\kappa(n,s)}^n,z_1)\Big\} N(ds,dz_1) \notag
\\
&+\int_{\kappa(n,t)}^t \int_Z\Gamma_3(x_{\kappa(n,s)}^n,z,z_1) N(ds,dz_1) \notag
\end{align}
and hence one obtains the following estimates,  
\begin{align}
E&\int_Z|\gamma(x_t^n, z)  -  \gamma^{(n)}(t,x_{\kappa(n,t)}^n, z)|^2\nu(dz)  \leq K E\int_Z\Big|\sum_{u=1}^{d}  \int_{\kappa(n,t)}^{t}\frac{\partial \gamma(x_s^n,z)}{\partial x^{u}}b^{(n),u}(x_{\kappa(n,s)}^n)ds\Big|^2 \nu(dz) \notag 
\\
&+K E\int_Z\Big|\sum_{u}^{d} \sum_{v=1}^{d} \sum_{j=1}^{m}\int_{\kappa(n,t)}^{t} \frac{\partial^2 \gamma(x_s^n,z)}{\partial x^{u} \partial x^{v}} \sigma^{(n),uj}(s,x_{\kappa(n,s)}^n) \sigma^{(n),vj}(s,x_{\kappa(n,s)}^n)ds\Big|^2\nu(dz) \notag
\\
&+ K E \int_Z\Big|\sum_{u=1}^{d}\sum_{j=1}^m \int_{\kappa(n,t)}^t \Big\{\frac{\partial \gamma(x_s^n,z)}{\partial x^{u}}-\frac{\partial \gamma(x_{\kappa(n,s)}^n,z)}{\partial x^{u}}\Big\} \frac{\sigma^{uj}(x_{\kappa(n,s)}^n)}{1+n^{-1}|x_{\kappa(n,s)}^n|^{4 \rho}} dw_s^j\Big|^2 \nu(dz) \notag
\\
&+ K E \int_Z\Big|\sum_{u=1}^{d}\sum_{j=1}^m \int_{\kappa(n,t)}^t \frac{\partial \gamma(x_s^n,z)}{\partial x^{u}}\int_{\kappa(n,s)}^s \sum_{k=1}^m \Lambda_1^{(n),k,uj}(x_{\kappa(n,r)}^n)dw_r^kdw_s^j\Big|^2 \nu(dz) \notag
\\
&+ K E \int_Z \Big|\sum_{u=1}^{d}\sum_{j=1}^m \int_{\kappa(n,t)}^t \frac{\partial \gamma(x_s^n,z)}{\partial x^{u}}\int_{\kappa(n,s)}^s \int_Z \Lambda_2^{(n),uj}(x_{\kappa(n,r)}^n, z_1)\tilde{N}(dr,dz_1)dw_s^j\Big|^2 \nu(dz)\notag
\\
&+ K E \int_Z \Big|\sum_{u=1}^{d}\sum_{j=1}^m \int_{\kappa(n,t)}^t \frac{\partial \gamma(x_s^n,z)}{\partial x^{u}}\int_{\kappa(n,s)}^s \int_Z \Lambda_3^{(n),uj}(x_{\kappa(n,r)}^n, z_1)N(dr,dz_1)dw_s^j\Big|^2 \nu(dz) \notag
\\
&+ K E \int_Z \Big|\sum_{u=1}^{d} \int_{\kappa(n,t)}^t \Big\{\frac{\partial \gamma(x_s^n,z)}{\partial x^{u}}-\frac{\partial \gamma(x_{\kappa(n,s)}^n,z)}{\partial x^{u}}\Big\}  \int_Z\gamma^{u}(x_{\kappa(n,s)}^n,z_1) \tilde{N}(ds,dz_1)\Big|^2 \nu(dz) \notag
\\
&+ K E \int_Z \Big| \sum_{u=1}^{d} \int_{\kappa(n,t)}^t  \frac{\partial \gamma(x_s^n,z)}{\partial x^{u}} \int_Z \int_{\kappa(n,s)}^s\sum_{k=1}^m \Gamma_1^{(n),k,u}(x_{\kappa(n,r)}^n,z_1)dw_r^k \tilde{N}(ds,dz_1)\Big|^2 \nu(dz) \notag
\\
&+ K E \int_Z \Big| \sum_{u=1}^{d} \int_{\kappa(n,t)}^t  \frac{\partial \gamma(x_s^n,z)}{\partial x^{u}} \int_Z \int_{\kappa(n,s)}^s \int_Z \Gamma_2^{(n),u}(x_{\kappa(n,r)}^n,z_1,z_2)\tilde{N}(dr,dz_2)\tilde{N}(ds,dz_1)\Big|^2 \nu(dz) \notag
\\
&+ K E \int_Z \Big|\sum_{u=1}^{d} \int_{\kappa(n,t)}^t  \frac{\partial \gamma(x_s^n,z)}{\partial x^{u}} \int_Z \int_{\kappa(n,s)}^s\int_Z  \Gamma_3^{(n),u}(x_{\kappa(n,r)}^n,z_1,z_2)N(dr,dz_2)\tilde{N}(ds,dz_1)\Big|^2 \nu(dz) \notag
\\
&+K E \int_Z \Big|\int_{\kappa(n,t)}^t \int_Z  \Big\{\gamma(x_s^n+\gamma^{(n)}(s,x_{\kappa(n,s)}^n,z_1),z)-\gamma(x_{\kappa(n,s)}^n+\gamma(x_{\kappa(n,s)}^n,z_1),z)+\gamma(x_{\kappa(n,s)}^n,z)  \notag
\\
&-\gamma(x_s^{n},z)+\sum_{u=1}^d\frac{\partial \gamma(x_{\kappa(n,s)}^n,z)}{\partial x^{u}}\gamma^{u}(x_{\kappa(n,s)}^n,z_1)-\sum_{u=1}^d \frac{\partial \gamma(x_s^n,z)}{\partial x^{u}} \gamma^{(n),u}(s,x_{\kappa(n,s)}^n,z_1) \Big\} N(ds,dz_1) \Big|^2 \nu(dz)\notag
\end{align}
for any $t\in[0,T]$, $z\in Z$ and  $n\in\mathbb{N}$. Due to H\"{o}lder's inequality and an elementary inequality of stochastic integral, one obtains
\begin{align}
E&\int_Z|\gamma(x_t^n, z)  -  \gamma^{(n)}(t,x_{\kappa(n,t)}^n, z)|^2\nu(dz) \notag
\\
& \leq K n^{-1} E \int_{\kappa(n,t)}^{t} \sum_{u=1}^d |b^{(n),u}(x_{\kappa(n,s)}^n)|^2 \int_Z\Big|\frac{\partial \gamma(x_s^n,z)}{\partial x^{u}}\Big|^2 \nu(dz) ds  \notag 
\\
&+K n^{-1}E\sum_{u}^{d} \sum_{v=1}^{d} \sum_{j=1}^{m}\int_{\kappa(n,t)}^{t} | \sigma^{(n),uj}(s,x_{\kappa(n,s)}^n) \sigma^{(n),vj}(s,x_{\kappa(n,s)}^n)|^2 \int_Z \Big|\frac{\partial^2 \gamma(x_s^n,z)}{\partial x^{u} \partial x^{v}}\Big|^2\nu(dz)ds \notag
\\
&+ K E \sum_{u=1}^{d}\sum_{j=1}^m \int_{\kappa(n,t)}^t |\sigma^{uj}(x_{\kappa(n,s)}^n)|^2 \int_Z \Big|\frac{\partial \gamma(x_s^n,z)}{\partial x^{u}}-\frac{\partial \gamma(x_{\kappa(n,s)}^n,z)}{\partial x^{u}}\Big|^2 \nu(dz)  ds  \notag
\\
&+ K E\sum_{u=1}^{d}\sum_{j=1}^m \sum_{k=1}^m \int_{\kappa(n,t)}^t  \Big|\int_{\kappa(n,s)}^s  \Lambda_1^{(n),k,uj}(x_{\kappa(n,r)}^n)dw_r^k\Big|^2 \int_Z\Big|\frac{\partial \gamma(x_s^n,z)}{\partial x^{u}}\Big|^2   \nu(dz)ds \notag
\\
&+ K E  \sum_{u=1}^{d}\sum_{j=1}^m \int_{\kappa(n,t)}^t \Big| \int_{\kappa(n,s)}^s \int_Z \Lambda_2^{(n),uj}(x_{\kappa(n,r)}^n, z_1)\tilde{N}(dr,dz_1)\Big|^2 \int_Z\Big|\frac{\partial \gamma(x_s^n,z)}{\partial x^{u}}\Big|^2 \nu(dz)ds \notag
\\
&+ K E  \sum_{u=1}^{d}\sum_{j=1}^m \int_{\kappa(n,t)}^t \Big| \int_{\kappa(n,s)}^s \int_Z \Lambda_3^{(n),uj}(x_{\kappa(n,r)}^n, z_1)N(dr,dz_1)\Big|^2 \int_Z\Big|\frac{\partial \gamma(x_s^n,z)}{\partial x^{u}}\Big|^2 \nu(dz)ds \notag
\\
&+ K E  \sum_{u=1}^{d} \int_{\kappa(n,t)}^t \int_Z |\gamma^{u}(x_{\kappa(n,s)}^n,z_1)|^2 \nu(dz_1) \int_Z\Big|\frac{\partial \gamma(x_s^n,z)}{\partial x^{u}}-\frac{\partial \gamma(x_{\kappa(n,s)}^n,z)}{\partial x^{u}}\Big|^2 \nu(dz) ds \notag
\\
&+ K E   \sum_{u=1}^{d} \int_{\kappa(n,t)}^t   \int_Z \Big|\int_{\kappa(n,s)}^s\sum_{k=1}^m \Gamma_1^{(n),k,u}(x_{\kappa(n,r)}^n,z_1)dw_r^k\Big|^2 \nu(dz_1) \int_Z \Big|\frac{\partial \gamma(x_s^n,z)}{\partial x^{u}}\Big|^2 \nu(dz) ds \notag
\\
&+ K E   \sum_{u=1}^{d} \int_{\kappa(n,t)}^t   \int_Z \Big| \int_{\kappa(n,s)}^s \int_Z \Gamma_2^{(n),u}(x_{\kappa(n,r)}^n,z_1,z_2) \tilde{N}(dr,dz_2)\Big|^2 \nu(dz_1) \int_Z\Big|\frac{\partial \gamma(x_s^n,z)}{\partial x^{u}} \Big|^2 \nu(dz)  ds \notag
\\
&+ K E \sum_{u=1}^{d} \int_{\kappa(n,t)}^t  \int_Z\Big| \int_{\kappa(n,s)}^s\int_Z  \Gamma_3^{(n),u}(x_{\kappa(n,r)}^n,z_1,z_2)N(dr,dz_2)\Big|^2 \nu(dz_1)  \int_Z  \Big|  \frac{\partial \gamma(x_s^n,z)}{\partial x^{u}}\Big|^2 \nu(dz) ds \notag
\\
&+K E \int_Z \int_{\kappa(n,t)}^t \int_Z  \Big|\gamma(x_s^n+\gamma^{(n)}(s,x_{\kappa(n,s)}^n,z_1),z)-\gamma(x_{\kappa(n,s)}^n+\gamma(x_{\kappa(n,s)}^n,z_1),z)\notag
\\
&\qquad+\gamma(x_{\kappa(n,s)}^n,z)-\gamma(x_s^{n},z) \notag
\\
&+\sum_{u=1}^d\frac{\partial \gamma(x_{\kappa(n,s)}^n,z)}{\partial x^{u}}\gamma^{u}(x_{\kappa(n,s)}^n,z_1)-\sum_{u=1}^d \frac{\partial \gamma(x_s^n,z)}{\partial x^{u}} \gamma^{(n),u}(s,x_{\kappa(n,s)}^n,z_1) \Big|^2 \nu(dz_1) ds  \nu(dz)\notag
\\
=:& \,G_1+G_2+G_3+G_4+G_5+G_6+G_7+G_8+G_9+G_{10}+G_{11} \label{G1+G11}
\end{align}
for any $t \in [0,T]$ and $n\in\mathbb{N}$. For estimating $G_1$, one uses Remarks [\ref{rem:growth}, \ref{rem:growth(n)}] and Lemma \ref{lem:mb:scheme} to obtain the following estimates, 
\begin{align}
G_1&:=n^{-1} E \int_{\kappa(n,t)}^{t} \sum_{u=1}^d |b^{(n),u}(x_{\kappa(n,s)}^n)|^2 \int_Z\Big|\frac{\partial \gamma(x_s^n,z)}{\partial x^{u}}\Big|^2 \nu(dz) ds  \notag 
\\
&\leq K n^{-2} E (1+|x_{\kappa(n,t)}^n|)^{2\rho+2} \leq K n^{-2}  \label{eq:G1} 
\end{align} 
for any $t\in[0,T]$ and $n\in\mathbb{N}$. Similarly, $G_2$ can be estimated by using Remark [\ref{rem:growth}, \ref{rem:growth(n)}] and Lemma \ref{lem:mb:scheme}, 
\begin{align}
G_2&:=K n^{-1}E\sum_{u}^{d} \sum_{v=1}^{d} \sum_{j=1}^{m}\int_{\kappa(n,t)}^{t} | \sigma^{(n),uj}(s,x_{\kappa(n,s)}^n) \sigma^{(n),vj}(s,x_{\kappa(n,s)}^n)|^2 \int_Z \Big|\frac{\partial^2 \gamma(x_s^n,z)}{\partial x^{u} \partial x^{v}}\Big|^2\nu(dz)ds \notag
\\
&\leq K n^{-1}E\int_{\kappa(n,t)}^{t} |\sigma^{(n)}(s,x_{\kappa(n,s)}^n)|^4 ds\leq K n^{-2} \label{eq:G2}
\end{align}
for any $t\in [0,T]$ and $n\in\mathbb{N}$. Also, for estimating $G_3$, one uses Assumption A-\ref{as:sde:lip:der}, Remark \ref{rem:growth}  and Lemma \ref{lem:one-step:rate:c} to write, 
\begin{align}
G_3&:=K E \sum_{u=1}^{d}\sum_{j=1}^m \int_{\kappa(n,t)}^t |\sigma^{uj}(x_{\kappa(n,s)}^n)|^2 \int_Z \Big|\frac{\partial \gamma(x_s^n,z)}{\partial x^{u}}-\frac{\partial \gamma(x_{\kappa(n,s)}^n,z)}{\partial x^{u}}\Big|^2 \nu(dz)  ds  \notag
\\
&\leq K E \int_{\kappa(n,t)}^t |x_s^n-x_{\kappa(n,s)}^n|^2 |\sigma(x_{\kappa(n,s)}^n)|^2 ds \leq Kn^{-2}  \label{eq:G3}
\end{align}
for any $t\in[0,T]$ and $n\in\mathbb{N}$. Due to an elementary inequality of stochastic integral, Remarks [\ref{rem:growth}, \ref{rem:growth(n)}] and Lemma \ref{lem:mb:scheme}, $G_4$ is estimated by, 
\begin{align}
G_4&:=K E\sum_{u=1}^{d}\sum_{j=1}^m \sum_{k=1}^m \int_{\kappa(n,t)}^t  \Big|\int_{\kappa(n,s)}^s  \Lambda_1^{(n),k,uj}(x_{\kappa(n,r)}^n)dw_r^k\Big|^2 \int_Z\Big|\frac{\partial \gamma(x_s^n,z)}{\partial x^{u}}\Big|^2   \nu(dz)ds \notag
\\
&\leq K E\sum_{u=1}^{d}\sum_{j=1}^m \sum_{k=1}^m \int_{\kappa(n,t)}^t  \Big|\int_{\kappa(n,s)}^s  \Lambda_1^{(n),k,uj}(x_{\kappa(n,r)}^n)dw_r^k\Big|^2 ds \notag
\\
&\leq K E \sum_{k=1}^m \int_{\kappa(n,t)}^t \int_{\kappa(n,s)}^s  |\Lambda_1^{(n),k}(x_{\kappa(n,r)}^n)|^2drds \leq Kn^{-2} \label{eq:G4}
\end{align}
for any $t\in[0,T]$ and $n\in\mathbb{N}$. Furthermore, one uses Remarks [\ref{rem:growth}, \ref{rem:growth(n)}] and Lemma \ref{lem:mb:scheme} to estimate $G_5$ as,  
\begin{align}
G_5&:=K E  \sum_{u=1}^{d}\sum_{j=1}^m \int_{\kappa(n,t)}^t \Big| \int_{\kappa(n,s)}^s \int_Z \Lambda_2^{(n),uj}(x_{\kappa(n,r)}^n, z_1)\tilde{N}(dr,dz_1)\Big|^2 \int_Z\Big|\frac{\partial \gamma(x_s^n,z)}{\partial x^{u}}\Big|^2 \nu(dz)ds \notag
\\
&\leq K E  \sum_{u=1}^{d}\sum_{j=1}^m \int_{\kappa(n,t)}^t \Big| \int_{\kappa(n,s)}^s \int_Z \Lambda_2^{(n),uj}(x_{\kappa(n,r)}^n, z_1)\tilde{N}(dr,dz_1)\Big|^2 ds \notag
\\
&\leq K E \int_{\kappa(n,t)}^t \int_{\kappa(n,s)}^s \int_Z |\Lambda_2^{(n)}(x_{\kappa(n,r)}^n, z_1)|^2 \nu(dz_1)drds \leq Kn^{-2} \label{eq:G5}
\end{align}
for any $t \in [0,T]$ and $n\in\mathbb{N}$. Similarly, one estimates $G_6$ as,  
\begin{align}
G_6&:=K E  \sum_{u=1}^{d}\sum_{j=1}^m \int_{\kappa(n,t)}^t \Big| \int_{\kappa(n,s)}^s \int_Z \Lambda_3^{(n),uj}(x_{\kappa(n,r)}^n, z_1)N(dr,dz_1)\Big|^2 \int_Z\Big|\frac{\partial \gamma(x_s^n,z)}{\partial x^{u}}\Big|^2 \nu(dz)ds \notag
\\
& \leq K E  \sum_{u=1}^{d}\sum_{j=1}^m \int_{\kappa(n,t)}^t \Big| \int_{\kappa(n,s)}^s \int_Z \Lambda_3^{(n),uj}(x_{\kappa(n,r)}^n, z_1)\tilde{N}(dr,dz_1)\Big|^2 ds \notag
\\
& + K E  \sum_{u=1}^{d}\sum_{j=1}^m \int_{\kappa(n,t)}^t \Big| \int_{\kappa(n,s)}^s \int_Z \Lambda_3^{(n),uj}(x_{\kappa(n,r)}^n, z_1)\nu(dz_1)dr\Big|^2 ds \notag
\\
&\leq K E  \int_{\kappa(n,t)}^t \int_{\kappa(n,s)}^s\int_Z |\Lambda_3^{(n)}(x_{\kappa(n,r)}^n, z_1)|^2\nu(z_1)drds\leq Kn^{-2}  \label{eq:G6}
\end{align}
for any $t \in [0,T]$ and $n\in\mathbb{N}$. Similarly, for estimating $G_7$, one uses Remark \ref{rem:growth}, Assumption \ref{as:sde:lip:der} and Lemma \ref{lem:one-step:rate:c} to obtain the following, 
\begin{align}
G_7&:=K E  \sum_{u=1}^{d} \int_{\kappa(n,t)}^t \int_Z |\gamma^{u}(x_{\kappa(n,s)}^n,z_1)|^2 \nu(dz_1) \int_Z\Big|\frac{\partial \gamma(x_s^n,z)}{\partial x^{u}}-\frac{\partial \gamma(x_{\kappa(n,s)}^n,z)}{\partial x^{u}}\Big|^2 \nu(dz) ds \notag
\\
&\leq K E  \int_{\kappa(n,t)}^t (1+|x_{\kappa(n,s)}^n|)^2 |x_s^n-x_{\kappa(n,s)}^n|^2  ds \leq K n^{-2} \label{eq:G7}
\end{align}
for any $t\in [0,T]$ and $n\in\mathbb{N}$. Also, ones the similar approach as before to estimate $G_8$ by,  
\begin{align}
G_8&:=K E   \sum_{u=1}^{d} \int_{\kappa(n,t)}^t   \int_Z \Big|\int_{\kappa(n,s)}^s\sum_{k=1}^m \Gamma_1^{(n),k,u}(x_{\kappa(n,r)}^n,z_1)dw_r^k\Big|^2 \nu(dz_1) \int_Z \Big|\frac{\partial \gamma(x_s^n,z)}{\partial x^{u}}\Big|^2 \nu(dz) ds \notag
\\
&\leq K E   \sum_{u=1}^{d} \int_{\kappa(n,t)}^t   \int_Z \Big|\int_{\kappa(n,s)}^s\sum_{k=1}^m \Gamma_1^{(n),k,u}(x_{\kappa(n,r)}^n,z_1)dw_r^k\Big|^2 \nu(dz_1)  ds \notag
\\
&\leq K E  \int_{\kappa(n,t)}^t \int_Z  \int_{\kappa(n,s)}^s  |\Gamma_1^{(n)}(x_{\kappa(n,r)}^n,z_1)|^2 dr \nu(dz_1) ds \leq Kn^{-2} \label{eq:G8}
\end{align}
for any $t \in [0,T]$ and $n\in\mathbb{N}$. Also, one estimates $G_9$ by, 
\begin{align}
G_9&:=K E   \sum_{u=1}^{d} \int_{\kappa(n,t)}^t   \int_Z \Big| \int_{\kappa(n,s)}^s \int_Z \Gamma_2^{(n),u}(x_{\kappa(n,r)}^n,z_1,z_2) \tilde{N}(dr,dz_2)\Big|^2 \nu(dz_1) \int_Z\Big|\frac{\partial \gamma(x_s^n,z)}{\partial x^{u}} \Big|^2 \nu(dz)  ds \notag
\\
&\leq K E   \sum_{u=1}^{d} \int_{\kappa(n,t)}^t   \int_Z \Big| \int_{\kappa(n,s)}^s \int_Z \Gamma_2^{(n),u}(x_{\kappa(n,r)}^n,z_1,z_2) \tilde{N}(dr,dz_2)\Big|^2 \nu(dz_1)   ds \notag
\\
&\leq K E  \int_{\kappa(n,t)}^t \int_Z \int_{\kappa(n,s)}^s\int_Z  |\Gamma_2(x_{\kappa(n,r)}^n,z_1,z_2)|^2 \nu(dz_2) dr \nu(dz_1)ds  \leq K n^{-2} \label{eq:G9}
\end{align}
for any $t\in [0,T]$ and $n\in\mathbb{N}$. Moreover, for estimating $G_{10}$, one writes,
\begin{align}
G_{10}&:=K E \sum_{u=1}^{d} \int_{\kappa(n,t)}^t  \int_Z\Big| \int_{\kappa(n,s)}^s\int_Z  \Gamma_3^{(n),u}(x_{\kappa(n,r)}^n,z_1,z_2)N(dr,dz_2)\Big|^2 \nu(dz_1)  \int_Z  \Big|  \frac{\partial \gamma(x_s^n,z)}{\partial x^{u}}\Big|^2 \nu(dz) ds \notag
\\
&\leq K E \sum_{u=1}^{d} \int_{\kappa(n,t)}^t  \int_Z\Big| \int_{\kappa(n,s)}^s\int_Z  \Gamma_3^{(n),u}(x_{\kappa(n,r)}^n,z_1,z_2)N(dr,dz_2)\Big|^2 \nu(dz_1)  ds \notag
\\
&\leq K E \int_{\kappa(n,t)}^t \int_Z  \int_{\kappa(n,s)}^s\int_Z  |\Gamma_3^{(n)}(x_{\kappa(n,r)}^n,z_1,z_2)|^2 \nu(dz_2) dr \nu(dz_1)ds  \leq K n^{-2} \label{eq:G10}
\end{align} 
for any $t\in [0,T]$. Finally, for estimating $G_{11}$, one writes, 
\begin{align}
G_{11}&:=K E \int_Z \int_{\kappa(n,t)}^t \int_Z  \Big|\gamma(x_s^n+\gamma^{(n)}(s,x_{\kappa(n,s)}^n,z_1),z)-\gamma(x_{\kappa(n,s)}^n+\gamma(x_{\kappa(n,s)}^n,z_1),z)\notag
\\
&\qquad+\gamma(x_{\kappa(n,s)}^n,z)-\gamma(x_s^{n},z) \notag
\\
&+\sum_{u=1}^d\frac{\partial \gamma(x_{\kappa(n,s)}^n,z)}{\partial x^{u}}\gamma^{u}(x_{\kappa(n,s)}^n,z_1)-\sum_{u=1}^d \frac{\partial \gamma(x_s^n,z)}{\partial x^{u}} \gamma^{(n),u}(s,x_{\kappa(n,s)}^n,z_1) \Big|^2 \nu(dz_1) ds  \nu(dz)\notag
\\
&\leq K E \int_Z \int_{\kappa(n,t)}^t\Big\{ \int_Z  |\gamma(x_s^n+\gamma^{(n)}(s,x_{\kappa(n,s)}^n,z_1),z)-\gamma(x_{\kappa(n,s)}^n+\gamma(x_{\kappa(n,s)}^n,z_1),z)|^2 \nu(dz) \notag
\\
&\qquad+ \int_Z |\gamma(x_{\kappa(n,s)}^n,z)-\gamma(x_s^{n},z)|^2\nu(dz) \notag
\\
&+\int_Z\sum_{u=1}^d\Big|\frac{\partial \gamma(x_{\kappa(n,s)}^n,z)}{\partial x^{u}}- \frac{\partial \gamma(x_s^n,z)}{\partial x^{u}}\Big|^2 \nu(dz)|\gamma^u(x_{\kappa(n,s)}^n,z_1)|^2 \Big\}\nu(dz_1) ds  \notag
\\
&+K E \int_Z \int_{\kappa(n,t)}^t \int_Z\sum_{u=1}^d\Big|\frac{\partial \gamma(x_s^n,z)}{\partial x^{u}}\Big|^2\Big|\int_{\kappa(n,s)}^{s} \sum_{k=1}^{m}\Gamma_1^{(n),j,u}(x_{\kappa(n,r)}^n,z_1)dw_r^k\Big|^2 \nu(dz_1)ds\nu(dz) \notag
\\
&+K E \int_Z \int_{\kappa(n,t)}^t \int_Z\sum_{u=1}^d\Big|\frac{\partial \gamma(x_s^n,z)}{\partial x^{u}}\Big|^2\Big|\int_{\kappa(n,s)}^{s} \int_Z \Gamma_2^u(x_{\kappa(n,r)}^n,z_1,z_2)\tilde{N}(dr,dz_2)\Big|^2 \nu(dz_1)ds\nu(dz) \notag
\\
&+K E \int_Z \int_{\kappa(n,t)}^t \int_Z\sum_{u=1}^d\Big|\frac{\partial \gamma(x_s^n,z)}{\partial x^{u}}\Big|^2\Big|\int_{\kappa(n,s)}^{s} \int_Z \Gamma_3^u(x_{\kappa(n,r)}^n,z_1,z_2)N(dr,dz_2)\Big|^2 \nu(dz_1)ds\nu(dz) \notag
\end{align}
which due to Remark [\ref{rem:growth}, \ref{rem:growth(n)}], Lemma \ref{lem:one-step:rate:c}, Corollary \ref{cor:one-step:rate:uc} and an elementary inequality of stochastic integrals gives,  
\begin{align}
G_{11} &\leq K E \int_{\kappa(n,t)}^t|x_s^n-x_{\kappa(n,s)}^n|^2 +K E \int_{\kappa(n,t)}^t|x_s^n-x_{\kappa(n,s)}^n|^2(1+|x_{\kappa(n,s)}^n|^2)ds  \notag
\\
&+K E \int_Z \int_{\kappa(n,t)}^t \int_{\kappa(n,s)}^{s} |\Gamma_1^{(n)}(x_{\kappa(n,r)}^n,z_1)|^2 dr \nu(dz_1)ds \notag
\\
&+K E  \int_{\kappa(n,t)}^t \int_Z \int_{\kappa(n,s)}^{s} \int_Z |\Gamma_2(x_{\kappa(n,r)}^n,z_1,z_2)|^2dr\nu(dz_2) \nu(dz_1)ds \notag
\\
&+K E \int_{\kappa(n,t)}^t \int_Z\int_{\kappa(n,s)}^{s} \int_Z |\Gamma_3(x_{\kappa(n,r)}^n,z_1,z_2)|^2 dr\nu(dz_2)\nu(dz_1)ds \leq Kn^{-2} \label{eq:G11}
\end{align}
for any $t\in[0,T]$ and $n\in\mathbb{N}$. The proof is completed by substituting estimates from \eqref{eq:G1} to \eqref{eq:G11} in equation \eqref{G1+G11}. 
\end{proof}
%-----------------------------------------------------------------
%                Lemma : sigma - sigma(n) : Rate
%-----------------------------------------------------------------
\begin{lem} \label{lem:sigma-sigma(n):rate}
Let Assumptions A-\ref{as:sde:ini} to A-\ref{as:sde:lip:der} be satisfied. Then, the following holds, 
\begin{align*}
\sup_{0 \leq t \leq T}E|\sigma(x_t^n) -  \sigma^{(n)}(t,x_{\kappa(n,t)}^n)|^2  \leq K n^{-1-\frac{2}{2+\delta}}.
\end{align*}
for any $n\in\mathbb{N}$ where the positive constant $K$ does not depend on $n$.  
\end{lem}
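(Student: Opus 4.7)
The plan is to mirror the approach taken in Lemma \ref{lem:gamma-gamma(n):rate}: apply It\^o's formula to $\sigma(x_t^n)$ on the interval $[\kappa(n,t),t]$, expand $\sigma^{(n)}(t,x_{\kappa(n,t)}^n)$ via its defining equation \eqref{eq:sigma(n)}, and subtract. The zeroth-order term $\Lambda_0^{(n)}(x_{\kappa(n,t)}^n)$ cancels $\sigma(x_{\kappa(n,t)}^n)$ up to the taming residual $\sigma(x_{\kappa(n,t)}^n)\,n^{-1}|x_{\kappa(n,t)}^n|^{4\rho}/(1+n^{-1}|x_{\kappa(n,t)}^n|^{4\rho})$; the Milstein corrections $\Lambda_1^{(n),k}$, $\Lambda_2^{(n)}$ and $\Lambda_3^{(n)}$ are designed to cancel, respectively, the leading parts of the It\^o-diffusion, compensated small-jump and uncompensated large-jump contributions in the expansion of $\sigma(x_t^n)-\sigma(x_{\kappa(n,t)}^n)$.

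What remains after this cancellation is a sum of about a dozen terms, structurally analogous to $G_1+\cdots+G_{11}$ in the proof of Lemma \ref{lem:gamma-gamma(n):rate}. I would estimate them one by one using Remarks \ref{rem:growth} and \ref{rem:growth(n)}, Lemmas \ref{lem:mb:scheme} and \ref{lem:one-step:rate:c}, Corollary \ref{cor:one-step:rate:uc}, the It\^o isometry and the Burkholder-type elementary inequalities already used in the paper, and Young's inequality. Most pieces---the drift term, the second-derivative $\partial^2\sigma$ term, the iterated stochastic integrals, and the compensated parts of the mixed jump-diffusion terms---give $Kn^{-2}$ once the super-linear polynomial bounds $|\sigma|\le K(1+|x|)^{\rho/2+1}$, $|\partial\sigma|\le K(1+|x|)^{\rho/2}$, $|\partial^2\sigma|\le K(1+|x|)^{\rho/2-1}$ are combined with the high moments guaranteed by Lemma \ref{lem:mb:scheme}, provided $p_0$ is taken sufficiently large.

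The main obstacle is the Brownian-diffusion residual produced by the matching against $\Lambda_1^{(n),k}$, which after an It\^o-isometry reduction looks essentially like
\[
K\,E\int_{\kappa(n,t)}^{t}\bigl|\partial\sigma(x_s^n)-\partial\sigma(x_{\kappa(n,s)}^n)\bigr|^{2}\bigl|\Lambda_0^{(n)}(x_{\kappa(n,s)}^n)\bigr|^{2}\,ds.
\]
By Assumption A-\ref{as:sde:lip:der} and Remark \ref{rem:growth} this is dominated by $K\int_{\kappa(n,t)}^{t}E\bigl[(1+|x_s^n|+|x_{\kappa(n,s)}^n|)^{2\rho}|x_s^n-x_{\kappa(n,s)}^n|^{2}\bigr]ds$. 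The super-linear factor $(1+|x|)^{2\rho}$ blocks the clean conditional argument used for $\gamma$, where the Lipschitz constant for $\partial\gamma$ delivered by A-\ref{as:sde:lip:der} was the $x$-independent constant $L$. My resolution is a H\"older split with conjugate exponents $p=1+\delta/2$ and $q=(2+\delta)/\delta$: Lemma \ref{lem:one-step:rate:c} applied with exponent $2+\delta$, admissible provided $2+\delta\le p_0/(2\rho+2)$, yields $(E|x_s^n-x_{\kappa(n,s)}^n|^{2+\delta})^{2/(2+\delta)}\le Kn^{-2/(2+\delta)}$, while Lemma \ref{lem:mb:scheme} keeps $(E(1+|x|)^{2\rho q})^{1/q}$ bounded once $p_0\ge 2\rho(2+\delta)/\delta$. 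Together with the factor $n^{-1}$ from the length of the time interval this produces $Kn^{-1-2/(2+\delta)}$ for this term, which dominates all the other contributions. The same H\"older interpolation absorbs the analogous residuals coming from the $\tilde N$ and $N$ integrals in which $\partial\sigma$ or $\sigma$ is evaluated at $x_s^n$ rather than at $x_{\kappa(n,s)}^n$, and the rate $Kn^{-1-2/(2+\delta)}$ asserted in the statement then follows by summation.
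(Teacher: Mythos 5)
Your proposal follows essentially the same route as the paper: It\^o's formula on $\sigma(x_t^n)$ over $[\kappa(n,t),t]$, cancellation against the terms of \eqref{eq:sigma(n)}, and a H\"older split with exponents $\tfrac{2+\delta}{2}$ and $\tfrac{2+\delta}{\delta}$ combined with Lemma \ref{lem:one-step:rate:c} (exponent $2+\delta$) and Lemma \ref{lem:mb:scheme} to handle the dominant residual $\int|\Delta\sigma(x_s^n)-\Delta\sigma(x_{\kappa(n,s)}^n)|^2|\Lambda_0^{(n)}|^2\,ds$, which is exactly the paper's term $H_4$ and yields the rate $Kn^{-1-\frac{2}{2+\delta}}$. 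The only minor inaccuracy is that the iterated integrals involving the jump measure (the analogues of $H_6$, $H_7$, $H_{11}$, $H_{12}$, $H_{13}$) give $Kn^{-1-\frac{2}{2+\delta}}$ rather than $Kn^{-2}$, since the $(2+\delta)$-moment of a compensated Poisson integral over an interval of length $n^{-1}$ is only $O(n^{-1})$; this does not affect the stated conclusion.
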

\begin{proof}
By the application of It\^{o}'s formula, one obtains
%\begin{align}
%\sigma(x_t^n)&=\sigma(x_{\kappa(n,t)}^n)+\sum_{u=1}^{d}  \int_{\kappa(n,t)}^{t}\frac{\partial \sigma(x_s^n)}{\partial x^{u}}b^{(n),u}(x_{\kappa(n,s)}^n)ds \notag
%\\
%&+\frac{1}{2}\sum_{u}^{d} \sum_{v=1}^{d} \sum_{j=1}^{m}\int_{\kappa(n,t)}^{t}\frac{\partial^2 \sigma(x_s^n)}{\partial x^{u} \partial x^{v}} \sigma^{(n),uj}(s,x_{\kappa(n,s)}^n) \sigma^{(n),vj}(s,x_{\kappa(n,s)}^n)ds \notag
%\\
%&+ \sum_{u=1}^{d}\sum_{j=1}^m \int_{\kappa(n,t)}^t \frac{\partial \sigma(x_s^n)}{\partial x^{u}} \sigma^{(n), uj}(s,x_{\kappa(n,s)}^n) dw_s^j \notag
%\\
%&+ \sum_{u=1}^{d} \int_{\kappa(n,t)}^t \int_Z \frac{\partial \sigma(x_s^n)}{\partial x^{u}} \gamma^{(n), u}(s,x_{\kappa(n,s)}^n,z_1) \tilde{N}(ds,dz_1) \notag
%\\
%+\int_{\kappa(n,t)}^t \int_Z & \big\{\sigma(x_s^n+\gamma^{(n)}(s,x_{\kappa(n,s)}^n,z_1))-\sigma(x_s^{n})-\sum_{u=1}^d \frac{\partial \sigma(x_s^n)}{\partial x^{u}} \gamma^{(n),u}(s,x_{\kappa(n,s)}^n,z_1) \big\} N(ds,dz_1) \notag
%\end{align}
%which can be further written as 
\begin{align}
\sigma(x_t^n)&=\sigma(x_{\kappa(n,t)}^n)+\sum_{u=1}^{d}  \int_{\kappa(n,t)}^{t}\frac{\partial \sigma(x_s^n)}{\partial x^{u}}b^{(n),u}(x_{\kappa(n,s)}^n)ds \notag
\\
&+\frac{1}{2}\sum_{u}^{d} \sum_{v=1}^{d} \int_{\kappa(n,t)}^{t}\frac{\partial^2 \sigma(x_s^n)}{\partial x^{u} \partial x^{v}} \sum_{j=1}^{m}\sigma^{(n),uj}(s,x_{\kappa(n,s)}^n) \sigma^{(n),vj}(s,x_{\kappa(n,s)}^n)ds \notag
\\
&+ \sum_{u=1}^{d} \int_{\kappa(n,t)}^t \frac{\partial \sigma(x_s^n)}{\partial x^{u}} \sum_{j=1}^m\frac{\sigma^{uj}(x_{\kappa(n,s)}^n)}{1+n^{-1}|x_{\kappa(n,s)}^n|^{4\rho}} dw_s^j \notag
\\
&+ \sum_{u=1}^{d} \int_{\kappa(n,t)}^t \frac{\partial \sigma(x_s^n)}{\partial x^{u}} \sum_{j=1}^m \int_{\kappa(n,s)}^s \sum_{k=1}^m \Lambda_1^{(n),k, uj}(x_{\kappa(n,r)}^n)dw_r^k dw_s^j \notag
\\
&+ \sum_{u=1}^{d} \int_{\kappa(n,t)}^t \frac{\partial \sigma(x_s^n)}{\partial x^{u}} \sum_{j=1}^m \int_{\kappa(n,s)}^s \int_Z \Lambda_2^{(n),uj}(x_{\kappa(n,r)}^n,z_1)\tilde{N}(dr,dz_1) dw_s^j \notag
\\
&+ \sum_{u=1}^{d} \int_{\kappa(n,t)}^t \frac{\partial \sigma(x_s^n)}{\partial x^{u}} \sum_{j=1}^m \int_{\kappa(n,s)}^s \int_Z \Lambda_3^{(n),uj}(x_{\kappa(n,r)}^n,z_1)N(dr,dz_1) dw_s^j \notag
\\
&+ \sum_{u=1}^{d} \int_{\kappa(n,t)}^t \int_Z \frac{\partial \sigma(x_s^n)}{\partial x^{u}} \gamma^{u}(x_{\kappa(n,s)}^n,z_1) \tilde{N}(ds,dz_1) \notag
\\
&+ \sum_{u=1}^{d} \int_{\kappa(n,t)}^t \int_Z \frac{\partial \sigma(x_s^n)}{\partial x^{u}} \int_{\kappa(n,s)}^s \sum_{k=1}^{m} \Gamma_1^{(n),k, u}(x_{\kappa(n,r)}^n, z_1)dw_r^k \tilde{N}(ds,dz_1) \notag
\\
&+ \sum_{u=1}^{d} \int_{\kappa(n,t)}^t \int_Z \frac{\partial \sigma(x_s^n)}{\partial x^{u}} \int_{\kappa(n,s)}^s \int_Z \Gamma_2^u(x_{\kappa(n,r)}^n,z_1,z_2) \tilde{N}(dr,dz_2) \tilde{N}(ds,dz_1) \notag
\\
&+ \sum_{u=1}^{d} \int_{\kappa(n,t)}^t \int_Z \frac{\partial \sigma(x_s^n)}{\partial x^{u}} \int_{\kappa(n,s)}^s \int_Z \Gamma_3^u(x_{\kappa(n,r)}^n,z_1,z_2)N(dr,dz_2) \tilde{N}(ds,dz_1) \notag
\\
+\int_{\kappa(n,t)}^t \int_Z & \big\{\sigma(x_s^n+\gamma^{(n)}(s,x_{\kappa(n,s)}^n,z_1))-\sigma(x_s^{n})-\sum_{u=1}^d \frac{\partial \sigma(x_s^n)}{\partial x^{u}} \gamma^{(n),u}(s,x_{\kappa(n,s)}^n,z_1) \big\} N(ds,dz_1) \notag
\end{align}
which on using the definition \eqref{eq:sigma(n)} gives, 
%\begin{align}
%\sigma(x_t^n)&-\sigma^{(n)}(t,x_{\kappa(n,t)}^n)=\sigma(x_t^n)-\sigma(x_{\kappa(n,t)}^n)+\sigma(x_{\kappa(n,t)}^n)-\Lambda_0^{(n)}(x_{\kappa(n,t)}^n) \notag
%\\
%&-\int_{\kappa(n,t)}^{t}\sum_{k=1}^m \Lambda_1^{(n),k}(x_{\kappa(n,s)}^n)dw_s^k - \int_{\kappa(n,t)}^{t}\int_Z \Lambda_2^{(n)}(x_{\kappa(n,s)}^n,z_1)\tilde{N}(ds,dz_1)\notag
%\\
%&- \int_{\kappa(n,t)}^{t}\int_Z \Lambda_3^{(n)}(x_{\kappa(n,s)}^n,z_1)\tilde{N}(ds,dz_1)\notag
%\end{align}
%and as a consequence, one obtains
\begin{align}
E|&\sigma(x_t^n)-\sigma^{(n)}(t,x_{\kappa(n,t)}^n)|^2\leq KE|\sigma(x_{\kappa(n,t)}^n)-\Lambda_0^{(n)}(x_{\kappa(n,t)}^n)|^2 \notag
\\
& + K E\Big|\sum_{u=1}^{d}  \int_{\kappa(n,t)}^{t}\frac{\partial \sigma(x_s^n)}{\partial x^{u}}b^{(n),u}(x_{\kappa(n,s)}^n)ds\Big|^2 \notag
\\
&+KE\Big|\sum_{u}^{d} \sum_{v=1}^{d} \sum_{j=1}^{m}\int_{\kappa(n,t)}^{t}\frac{\partial^2 \sigma(x_s^n)}{\partial x^{u} \partial x^{v}} \sigma^{(n),uj}(s,x_{\kappa(n,s)}^n) \sigma^{(n),vj}(s,x_{\kappa(n,s)}^n)ds\Big|^2 \notag
\\
&+ KE\Big| \sum_{k=1}^m \int_{\kappa(n,t)}^t \sum_{u=1}^{d}\Big\{\frac{\partial \sigma(x_s^n)}{\partial x^{u}}-\frac{\partial \sigma(x_{\kappa(n,s)}^n)}{\partial x^{u}}\Big\} \frac{\sigma^{uk}(x_{\kappa(n,s)}^n)}{1+n^{-1}|x_{\kappa(n,s)}^n|^{4\rho}} dw_s^k\Big|^2 \notag
\\
&+ K E\Big|\sum_{u=1}^{d}\sum_{j=1}^m \int_{\kappa(n,t)}^t \frac{\partial \sigma(x_s^n)}{\partial x^{u}} \int_{\kappa(n,s)}^s \sum_{k=1}^m \Lambda_1^{(n),k, uj}(x_{\kappa(n,r)}^n)dw_r^k dw_s^j\Big|^2 \notag
\\
&+ K E\Big|\sum_{u=1}^{d}\sum_{j=1}^m \int_{\kappa(n,t)}^t \frac{\partial \sigma(x_s^n)}{\partial x^{u}} \int_{\kappa(n,s)}^s \int_Z \Lambda_2^{(n),uj}(x_{\kappa(n,r)}^n,z_1)\tilde{N}(dr,dz_1) dw_s^j\Big|^2 \notag
\\
&+ K E\Big|\sum_{u=1}^{d}\sum_{j=1}^m \int_{\kappa(n,t)}^t \frac{\partial \sigma(x_s^n)}{\partial x^{u}}  \int_{\kappa(n,s)}^s \int_Z \Lambda_3^{(n),uj}(x_{\kappa(n,r)}^n,z_1)N(dr,dz_1) dw_s^j\Big|^2 \notag
\\
&+ K E\Big|\sum_{u=1}^{d} \int_{\kappa(n,t)}^t \int_Z \Big\{\frac{\partial \sigma(x_s^n)}{\partial x^{u}}-\frac{\partial \sigma(x_{\kappa(n,s)}^n)}{\partial x^{u}}\Big\} \gamma^{u}(x_{\kappa(n,s)}^n,z_1) \tilde{N}(ds,dz_1)\Big|^2 \notag
\\
&+ K E\Big| \sum_{u=1}^{d} \int_{\kappa(n,t)}^t \int_Z \Big\{1 -\frac{1}{1+n^{-1}|x_{\kappa(n,s)}^n|^{4\rho}}\Big\} \frac{\partial \sigma(x_{\kappa(n,s)}^n)}{\partial x^{u}} \gamma^{u}(x_{\kappa(n,s)}^n,z_1) \tilde{N}(ds,dz_1)\Big|^2 \notag
\\
&+ K E\Big| \sum_{u=1}^{d} \int_{\kappa(n,t)}^t \int_Z \frac{\partial \sigma(x_s^n)}{\partial x^{u}} \int_{\kappa(n,s)}^s \sum_{k=1}^{m} \Gamma_1^{(n),k, u}(x_{\kappa(n,r)}^n, z_1)dw_r^k \tilde{N}(ds,dz_1)\Big|^2 \notag
\\
&+ K E\Big| \sum_{u=1}^{d} \int_{\kappa(n,t)}^t \int_Z \frac{\partial \sigma(x_s^n)}{\partial x^{u}} \int_{\kappa(n,s)}^s \int_Z \Gamma_2^u(x_{\kappa(n,r)}^n,z_1,z_2) \tilde{N}(dr,dz_2) \tilde{N}(ds,dz_1) \Big|^2 \notag
\\
&+  K E\Big| \sum_{u=1}^{d} \int_{\kappa(n,t)}^t \int_Z \frac{\partial \sigma(x_s^n)}{\partial x^{u}} \int_{\kappa(n,s)}^s \int_Z \Gamma_3^u(x_{\kappa(n,r)}^n,z_1,z_2)N(dr,dz_2) \tilde{N}(ds,dz_1) \Big|^2\notag
\\
&+K E\Big| \int_{\kappa(n,t)}^t \int_Z  \Big\{\sigma(x_s^n+\gamma^{(n)}(s,x_{\kappa(n,s)}^n,z_1))-\sigma(x_s^{n})-\sum_{u=1}^d \frac{\partial \sigma(x_s^n)}{\partial x^{u}} \gamma^{(n),u}(s,x_{\kappa(n,s)}^n,z_1)  \notag
\\
& - \frac{\sigma(x_s^n+\gamma(x_{\kappa(n,s)}^n,z_1))-\sigma(x_s^{n})-\sum_{u=1}^d \frac{\partial \sigma(x_s^n)}{\partial x^{u}} \gamma^{u}(x_{\kappa(n,s)}^n,z_1) }{1+n^{-1}|x_{\kappa(n,s)}^n|^{4\rho}}\Big\} N(ds,dz_1)\Big|^2 \notag
\\
:=& H_1+H_2+H_3+H_4+H_5+H_6+H_7+H_8+H_9+H_{10}+H_{11}+H_{12}+H_{13} \label{eq:H1+H13} 
\end{align}
for any $t\in [0,T]$ and $n\in\mathbb{N}$. One uses Remark \ref{rem:growth} and Lemma \ref{lem:mb:scheme} to obtain,  
\begin{align}
H_1&:=KE|\sigma(x_{\kappa(n,t)}^n)-\Lambda_0^{(n)}(x_{\kappa(n,t)}^n)|^2 \notag
\\
&\leq KE\Big|\sigma(x_{\kappa(n,t)}^n)-\frac{\sigma(x_{\kappa(n,t)}^n)}{1+n^{-1}|x_{\kappa(n,t)}^n|^{2\rho}}\Big|^2 \notag 
\\
&\leq K n^{-2}E|\sigma(x_{\kappa(n,t)}^n)|^2|x_{\kappa(n,t)}^n|^{4\rho} \leq Kn^{-2} \label{eq:H1}
\end{align}
for any $t\in[0,T]$ and $n\in\mathbb{N}$. For estimating $H_2$, one applies H\"{o}lder's inequality, Remarks [\ref{rem:growth}, \ref{rem:growth(n)}] and Lemma \ref{lem:mb:scheme} to get, 
\begin{align}
H_2&:=K  E  \Big|\sum_{u=1}^{d}\int_{\kappa(n,t)}^{t}\frac{\partial \sigma(x_s^n)}{\partial x^{u}}b^{(n),u}(x_{\kappa(n,s)}^n) ds\Big|^2 \notag
\\
&\leq K  n^{-1}E \int_{\kappa(n,t)}^{t} (1+|x_s^n|)^\rho (1+|x_{\kappa(n,s)}^n|)^{2\rho+2} ds \leq K n^{-2} \label{eq:H2}
\end{align}
for any $t\in [0,T]$ and $n\in\mathbb{N}$. Similarly, one estimates $H_3$ by,  
\begin{align}
H_3&:=KE\Big|\sum_{u}^{d} \sum_{v=1}^{d} \sum_{j=1}^{m}\int_{\kappa(n,t)}^{t}\frac{\partial^2 \sigma(x_s^n)}{\partial x^{u} \partial x^{v}} \sigma^{(n),uj}(s,x_{\kappa(n,s)}^n) \sigma^{(n),vj}(s,x_{\kappa(n,s)}^n)ds\Big|^2 \notag
\\
&\leq Kn^{-1}E\int_{\kappa(n,t)}^{t}(1+|x_s^n|)^{\rho-2} |\sigma^{(n)}(s,x_{\kappa(n,s)}^n)|^4ds  \leq K n^{-2} \label{eq:H3}
\end{align}
for any $t\in[0,T]$ and $n\in\mathbb{N}$. For estimating $H_4$, one uses  Assumption A-\ref{as:sde:lip:der}, Remark \ref{rem:growth} to obtain,   
\begin{align}
H_4&:=KE\Big| \sum_{k=1}^m \int_{\kappa(n,t)}^t \sum_{u=1}^{d}\Big\{\frac{\partial \sigma(x_s^n)}{\partial x^{u}}-\frac{\partial \sigma(x_{\kappa(n,s)}^n)}{\partial x^{u}}\Big\} \frac{\sigma^{uk}(x_{\kappa(n,s)}^n)}{1+n^{-1}|x_{\kappa(n,s)}^n|^{4\rho}} dw_s^k\Big|^2 \notag
\\
&\leq  KE\sum_{k=1}^m \int_{\kappa(n,t)}^t \sum_{u=1}^{d}\Big|\frac{\partial \sigma(x_s^n)}{\partial x^{u}}-\frac{\partial \sigma(x_{\kappa(n,s)}^n)}{\partial x^{u}}\Big|^2 |\sigma^{uk}(x_{\kappa(n,s)}^n)|^2 ds \notag
\\
&\leq KE \int_{\kappa(n,t)}^t (1+|x_s^n|+|x_{\kappa(n,s)}^n|)^{\rho-2}|x_s^n-x_{\kappa(n,s)}^n|^2 (1+|x_{\kappa(n,s)}^n|)^{\rho+2} ds \notag
\end{align}
which on the application of H\"{o}lder's inequality, Corollary \ref{cor:one-step:rate:uc} gives the following estimates, 
\begin{align}
H_4&\leq K\int_{\kappa(n,t)}^t \{E|x_s^n-x_{\kappa(n,s)}^n|^{2+\delta}\}^\frac{2}{2+\delta} \{E(1+|x_s^n|+|x_{\kappa(n,s)}^n|)^\frac{2\rho(2+\delta)}{\delta}\}^\frac{\delta}{2+\delta} ds \notag
\\
&\leq K n^{-1-\frac{2}{2+\delta}} \label{eq:H4}
\end{align}
for any $t\in [0,T]$ and $n\in\mathbb{N}$. For estimating $H_5$, one uses H\"{o}lder's equality, an elementary inequality of stochastic integrals, Remarks [\ref{rem:growth}, \ref{rem:growth(n)}] and Lemma \ref{lem:mb:scheme} to obtain,  
\begin{align}
H_5&:= K E\Big|\sum_{u=1}^{d}\sum_{j=1}^m \int_{\kappa(n,t)}^t \frac{\partial \sigma(x_s^n)}{\partial x^{u}} \int_{\kappa(n,s)}^s \sum_{k=1}^m \Lambda_1^{(n),k, uj}(x_{\kappa(n,r)}^n)dw_r^k dw_s^j\Big|^2 \notag
\\
&\leq K  \int_{\kappa(n,t)}^t \{E(1+|x_s^n|)^{\rho}\}^\frac{1}{2} \Big\{ E\Big|\int_{\kappa(n,s)}^s \sum_{k=1}^m \Lambda_1^{(n),k}(x_{\kappa(n,r)}^n)dw_r^k\Big|^4\Big\}^\frac{1}{2} ds \notag
\\
&\leq K  \int_{\kappa(n,t)}^t  \Big\{ n^{-1} E\int_{\kappa(n,s)}^s \Big|\sum_{k=1}^m \Lambda_1^{(n),k}(x_{\kappa(n,r)}^n)\Big|^4 dr\Big\}^\frac{1}{2} ds \leq Kn^{-2} \label{eq:H5}
\end{align}
for any $t\in[0,T]$ and $n\in\mathbb{N}$. Similarly, for estimating $H_6$, one writes, 
\begin{align}
H_6&:=K E\Big|\sum_{u=1}^{d}\sum_{j=1}^m \int_{\kappa(n,t)}^t \frac{\partial \sigma(x_s^n)}{\partial x^{u}} \int_{\kappa(n,s)}^s \int_Z \Lambda_2^{(n),uj}(x_{\kappa(n,r)}^n,z_1)\tilde{N}(dr,dz_1) dw_s^j\Big|^2 \notag
\\
&\leq K E\sum_{u=1}^{d}\sum_{j=1}^m \int_{\kappa(n,t)}^t \Big|\frac{\partial \sigma(x_s^n)}{\partial x^{u}}\Big|^2 \Big| \int_{\kappa(n,s)}^s \int_Z \Lambda_2^{(n),uj}(x_{\kappa(n,r)}^n,z_1)\tilde{N}(dr,dz_1)\Big|^2 ds \notag
\\
&\leq K E \int_{\kappa(n,t)}^t (1+|x_s^n|)^\rho \Big| \int_{\kappa(n,s)}^s \int_Z \Lambda_2^{(n)}(x_{\kappa(n,r)}^n,z_1)\tilde{N}(dr,dz_1)\Big|^2 ds \notag
\\
&\leq K  \int_{\kappa(n,t)}^t \Big\{E(1+|x_s^n|)^\frac{\rho(2+\delta)}{\delta}\Big\}^\frac{\delta}{2+\delta} \Big\{E\Big| \int_{\kappa(n,s)}^s \int_Z \Lambda_2^{(n)}(x_{\kappa(n,r)}^n,z_1)\tilde{N}(dr,dz_1)\Big|^{2+\delta}\Big\}^\frac{2}{2+\delta} ds \notag
\\
&\leq K  \int_{\kappa(n,t)}^t  \Big\{E\Big( \int_{\kappa(n,s)}^s \int_Z |\Lambda_2^{(n)}(x_{\kappa(n,r)}^n,z_1)|^2 \nu(dz_1)dr\Big)^\frac{2+\delta}{2}\notag
\\
&\qquad +E \int_{\kappa(n,s)}^s \int_Z |\Lambda_2^{(n)}(x_{\kappa(n,r)}^n,z_1)|^{2+\delta} \nu(dz_1)\Big\}^\frac{2}{2+\delta} ds \notag
\\
& \leq Kn^{-1-\frac{2}{2+\delta}} \label{eq:H6}
\end{align}
for any $t \in [0,T]$ and $n\in\mathbb{N}$. For estimating $H_7$, one proceeds as before,  
\begin{align}
H_7&:=  K E\Big|\sum_{u=1}^{d}\sum_{j=1}^m \int_{\kappa(n,t)}^t \frac{\partial \sigma(x_s^n)}{\partial x^{u}}  \int_{\kappa(n,s)}^s \int_Z \Lambda_3^{(n),uj}(x_{\kappa(n,r)}^n,z_1)N(dr,dz_1) dw_s^j\Big|^2 \notag
\\
&\leq K E \int_{\kappa(n,t)}^t (1+|x_s^n|)^\rho \Big|  \int_{\kappa(n,s)}^s \int_Z \Lambda_3^{(n)}(x_{\kappa(n,r)}^n,z_1)N(dr,dz_1)\Big|^2 ds \notag
\\
&\leq K E \int_{\kappa(n,t)}^t \Big\{E(1+|x_s^n|)^\frac{\rho(2+\delta)}{\delta}\Big\}^\frac{\delta}{2+\delta} \Big\{E\Big|  \int_{\kappa(n,s)}^s \int_Z \Lambda_3^{(n)}(x_{\kappa(n,r)}^n,z_1)N(dr,dz_1)\Big|^{2+\delta}\Big\}^\frac{2}{2+\delta} ds \notag
\\
&\leq K n^{-1-\frac{2}{2+\delta}}  \label{eq:H7}
\end{align}
for any $t\in [0,T]$  and $n\in\mathbb{N}$. Further, for estimating $H_8$, due to Assumption \ref{as:sde:lip:der} and Remark \ref{rem:growth}, one writes,
\begin{align}
H_8&:= K E\Big|\sum_{u=1}^{d} \int_{\kappa(n,t)}^t \int_Z \Big\{\frac{\partial \sigma(x_s^n)}{\partial x^{u}}-\frac{\partial \sigma(x_{\kappa(n,s)}^n)}{\partial x^{u}}\Big\} \gamma^{u}(x_{\kappa(n,s)}^n,z_1) \tilde{N}(ds,dz_1)\Big|^2 \notag
\\
&\leq K E \int_{\kappa(n,t)}^t  (1+|x_s^n|+|x_{\kappa(n,s)}^n|)^{\rho-2} |x_s^n-x_{\kappa(n,s)}^n|^2 (1+|x_{\kappa(n,s)}^n|)^2  ds \notag
\\
&\leq K  \int_{\kappa(n,t)}^t  \{E|x_s^n-x_{\kappa(n,s)}^n|^{2+\delta}\}^\frac{2}{2+\delta} \{E(1+|x_s^n|+|x_{\kappa(n,s)}^n|)^\frac{\rho (2+\delta)}{\delta}\}^\frac{\delta}{2+\delta}    ds \notag
\\
&\leq K n^{-1-\frac{2}{2+\delta}} \label{eq:H8}
\end{align} 
for any $t\in [0,T]$ and $n\in\mathbb{N}$.  Also, on applying Remark \ref{rem:growth} and Lemma \ref{lem:mb:scheme}, one gets
\begin{align}
H_9&:=K E\Big| \sum_{u=1}^{d} \int_{\kappa(n,t)}^t \int_Z \Big\{1 -\frac{1}{1+n^{-1}|x_{\kappa(n,s)}^n|^{4\rho}}\Big\} \frac{\partial \sigma(x_{\kappa(n,s)}^n)}{\partial x^{u}} \gamma^{u}(x_{\kappa(n,s)}^n,z_1) \tilde{N}(ds,dz_1)\Big|^2 \notag
\\
&\leq K n^{-2} E\Big| \sum_{u=1}^{d} \int_{\kappa(n,t)}^t \int_Z |x_{\kappa(n,s)}^n|^{8\rho} \Big|\frac{\partial \sigma(x_{\kappa(n,s)}^n)}{\partial x^{u}}\Big|^2 |\gamma^{u}(x_{\kappa(n,s)}^n,z_1)|^2 \nu(dz_1) ds  \notag
\\
& \leq K n^{-2} E \int_{\kappa(n,t)}^t  |x_{\kappa(n,s)}^n|^{4\rho} (1+|x_{\kappa(n,s)}^n|)^{\rho+2}  ds  \leq K n^{-3} \label{eq:H9} 
\end{align} 
for any $t\in[0,T]$  and $n\in\mathbb{N}$. Due to an elementary inequality of stochastic integrals, Remarks [\ref{rem:growth}, \ref{rem:growth(n)}],  H\"{o}lder's inequality and Lemma \ref{lem:mb:scheme}, one gets the following estimates of $H_{10}$,  
\begin{align}
H_{10}&:=K E\Big| \sum_{u=1}^{d} \int_{\kappa(n,t)}^t \int_Z \frac{\partial \sigma(x_s^n)}{\partial x^{u}} \int_{\kappa(n,s)}^s \sum_{k=1}^{m} \Gamma_1^{(n),k, u}(x_{\kappa(n,r)}^n, z_1)dw_r^k \tilde{N}(ds,dz_1)\Big|^2 \notag
\\
&\leq K E\int_{\kappa(n,t)}^t(1+|x_s^n|)^\rho  \int_Z  \Big| \int_{\kappa(n,s)}^s \sum_{k=1}^{m} \Gamma_1^{(n),k}(x_{\kappa(n,r)}^n, z_1)dw_r^k\Big|^2 \nu(dz_1)ds  \notag
\\
&\leq K E\int_{\kappa(n,t)}^t \{E(1+|x_s^n|)^{2\rho}\}^\frac{1}{2}  \int_Z \Big\{ E\Big| \int_{\kappa(n,s)}^s \sum_{k=1}^{m} \Gamma_1^{(n),k}(x_{\kappa(n,r)}^n, z_1)dw_r^k\Big|^4\Big\}^\frac{1}{2} \nu(dz_1)ds  \notag
\\
&\leq K E\int_{\kappa(n,t)}^t   \int_Z \Big\{n^{-1} E \int_{\kappa(n,s)}^s \Big|\sum_{k=1}^{m} \Gamma_1^{(n),k}(x_{\kappa(n,r)}^n, z_1)\Big|^4dr\Big\}^\frac{1}{2} \nu(dz_1)ds  \notag
\\
&\leq K n^{-2} \label{eq:H10}
\end{align}
for any $t \in [0,T]$  and $n\in\mathbb{N}$. Similarly, one also has the following estimate,   
\begin{align}
H_{11}&:=K E\Big| \sum_{u=1}^{d} \int_{\kappa(n,t)}^t \int_Z \frac{\partial \sigma(x_s^n)}{\partial x^{u}} \int_{\kappa(n,s)}^s \int_Z \Gamma_2^u(x_{\kappa(n,r)}^n,z_1,z_2) \tilde{N}(dr,dz_2) \tilde{N}(ds,dz_1) \Big|^2 \notag
\\
&\leq K E  \int_{\kappa(n,t)}^t \int_Z (1+ |x_s^n|)^\rho \Big|\int_{\kappa(n,s)}^s \int_Z \Gamma_2(x_{\kappa(n,r)}^n,z_1,z_2) \tilde{N}(dr,dz_2)\Big|^2 \nu(dz_1)ds  \notag
\\
&\leq K   \int_{\kappa(n,t)}^t \int_Z \{E(1+ |x_s^n|)^\frac{\rho(2+\delta)}{\delta}\}^\frac{\delta}{2+\delta} \Big\{E\Big|\int_{\kappa(n,s)}^s \int_Z \Gamma_2(x_{\kappa(n,r)}^n,z_1,z_2) \tilde{N}(dr,dz_2)\Big|^{2+\delta}\Big\}^\frac{2}{2+\delta} \nu(dz_1)ds  \notag
\\
&\leq K  \int_{\kappa(n,t)}^t \int_Z  \Big\{E\Big(\int_{\kappa(n,s)}^s \int_Z |\Gamma_2(x_{\kappa(n,r)}^n,z_1,z_2)|^2 \nu(dz_2) dr\Big)^\frac{2+\delta}{2}\notag
\\
&\qquad+E\int_{\kappa(n,s)}^s \int_Z |\Gamma_2(x_{\kappa(n,r)}^n,z_1,z_2)|^{2+\delta} \nu(dz_2) dr\Big\}^\frac{2}{2+\delta} \nu(dz_1)ds  \notag
\\
&\leq K n^{-1-\frac{2}{2+\delta}} \label{eq:H11}
\end{align}
for any $t\in[0,T]$ and $n\in\mathbb{N}$. Also,  
\begin{align}
&H_{12}:=K E\Big| \sum_{u=1}^{d} \int_{\kappa(n,t)}^t \int_Z \frac{\partial \sigma(x_s^n)}{\partial x^{u}} \int_{\kappa(n,s)}^s \int_Z \Gamma_3^u(x_{\kappa(n,r)}^n,z_1,z_2)N(dr,dz_2) \tilde{N}(ds,dz_1) \Big|^2\notag
\\
& \leq K E  \int_{\kappa(n,t)}^t \int_Z (1+|x_s^n|)^\rho \Big|\int_{\kappa(n,s)}^s \int_Z \Gamma_3(x_{\kappa(n,r)}^n,z_1,z_2)N(dr,dz_2)\Big|^2 \nu(dz_1) ds \notag
\\
& \leq K E  \int_{\kappa(n,t)}^t \int_Z \{E(1+|x_s^n|)^\frac{\rho(2+\delta)}{\delta}\}^\frac{\delta}{2+\delta} \Big\{E\Big|\int_{\kappa(n,s)}^s \int_Z \Gamma_3(x_{\kappa(n,r)}^n,z_1,z_2)N(dr,dz_2)\Big|^{2+\delta}\Big\}^\frac{2}{2+\delta} \nu(dz_1) ds \notag
\\
& \leq K n^{-1-\frac{2}{2+\delta}}  \label{eq:H12}
\end{align}
for any $t\in[0,T]$ and $n\in\mathbb{N}$. On the application of an elementary inequality of stochastic integrals, one obtains,
\begin{align}
H_{13}&:=K E\Big| \int_{\kappa(n,t)}^t \int_Z  \Big\{\sigma(x_s^n+\gamma^{(n)}(s,x_{\kappa(n,s)}^n,z_1))-\sigma(x_s^{n})-\sum_{u=1}^d \frac{\partial \sigma(x_s^n)}{\partial x^{u}} \gamma^{(n),u}(s,x_{\kappa(n,s)}^n,z_1)  \notag
\\
& - \frac{\sigma(x_s^n+\gamma(x_{\kappa(n,s)}^n,z_1))-\sigma(x_s^{n})-\sum_{u=1}^d \frac{\partial \sigma(x_{\kappa(n,s)}^n)}{\partial x^{u}} \gamma^{u}(x_{\kappa(n,s)}^n,z_1) }{1+n^{-1}|x_{\kappa(n,s)}^n|^{4\rho}}\Big\} N(ds,dz_1)\Big|^2 \notag
\\
&\leq K E \int_{\kappa(n,t)}^t \int_Z  \Big|\sigma(x_s^n+\gamma^{(n)}(s,x_{\kappa(n,s)}^n,z_1))-\sigma(x_s^{n})-\sum_{u=1}^d \frac{\partial \sigma(x_s^n)}{\partial x^{u}} \gamma^{(n),u}(s,x_{\kappa(n,s)}^n,z_1)  \notag
\\
& - \frac{\sigma(x_{\kappa(n,s)}^n+\gamma(x_{\kappa(n,s)}^n,z_1))-\sigma(x_{\kappa(n,s)}^n)-\sum_{u=1}^d \frac{\partial \sigma(x_{\kappa(n,s)}^n)}{\partial x^{u}} \gamma^{u}(x_{\kappa(n,s)}^n,z_1) }{1+n^{-1}|x_{\kappa(n,s)}^n|^{4\rho}}\Big|^2 \nu(dz_1) ds \notag
\\
&=K E \int_{\kappa(n,t)}^t \int_Z  \Big|\sigma(x_s^n+\gamma^{(n)}(s,x_{\kappa(n,s)}^n,z_1))-\sigma(x_{\kappa(n,s)}^n+\gamma(x_{\kappa(n,s)}^n,z_1)) \notag
\\
&+\sigma(x_{\kappa(n,s)}^n+\gamma(x_{\kappa(n,s)}^n,z_1))-\frac{\sigma(x_{\kappa(n,s)}^n+\gamma(x_{\kappa(n,s)}^n,z_1))}{1+n^{-1}|x_{\kappa(n,s)}^n|^{4\rho}} + \sigma(x_{\kappa(n,s)}^n)-\sigma(x_s^{n})\notag
\\
&+\frac{\sigma(x_{\kappa(n,s)}^n)}{1+n^{-1}|x_{\kappa(n,s)}^n|^{4\rho}}-\sigma(x_{\kappa(n,s)}^n) \notag
\\
& + \frac{\sum_{u=1}^d \frac{\partial \sigma(x_{\kappa(n,s)}^n)}{\partial x^{u}} \gamma^{u}(x_{\kappa(n,s)}^n,z_1) }{1+n^{-1}|x_{\kappa(n,s)}^n|^{4\rho}}-\sum_{u=1}^d \frac{\partial \sigma(x_{\kappa(n,s)}^n)}{\partial x^{u}} \gamma^{u}(x_{\kappa(n,s)}^n,z_1)\notag
\\
& + \sum_{u=1}^d \Big\{\frac{\partial \sigma(x_{\kappa(n,s)}^n)}{\partial x^{u}} -\frac{\partial \sigma(x_s^n)}{\partial x^{u}}\Big\} \gamma^{u}(x_{\kappa(n,s)}^n,z_1) \notag
\\
&-\sum_{u=1}^d\frac{\partial \sigma(x_s^n)}{\partial x^{u}}\int_{\kappa(n,s)}^s \sum_{k=1}^m \Gamma_1^{(n),j,u}(x_{\kappa(n,r)}^n,z_1)dw_r^k \notag
\\
&-\sum_{u=1}^d\frac{\partial \sigma(x_s^n)}{\partial x^{u}}\int_{\kappa(n,s)}^s \int_Z \Gamma_2^u(x_{\kappa(n,r)}^n,z_1,z_2)\tilde{N}(dr,dz_2) \notag
\\
&-\sum_{u=1}^d\frac{\partial \sigma(x_s^n)}{\partial x^{u}}\int_{\kappa(n,s)}^s \int_Z \Gamma_3^u(x_{\kappa(n,r)}^n,z_1,z_2)N(dr,dz_2)\Big|^2 \nu(dz_1) ds \notag
\end{align}
which can further be estimated by, 
\begin{align}
H_{13}&\leq K E \int_{\kappa(n,t)}^t \int_Z  |\sigma(x_s^n+\gamma^{(n)}(s,x_{\kappa(n,s)}^n,z_1))-\sigma(x_{\kappa(n,s)}^n+\gamma(x_{\kappa(n,s)}^n,z_1))|^2 \nu(dz_1)ds \notag
\\
&+ K E \int_{\kappa(n,t)}^t \int_Z \Big|\sigma(x_{\kappa(n,s)}^n+\gamma(x_{\kappa(n,s)}^n,z_1))-\frac{\sigma(x_{\kappa(n,s)}^n+\gamma(x_{\kappa(n,s)}^n,z_1))}{1+n^{-1}|x_{\kappa(n,s)}^n|^{4\rho}}\Big|^2\nu(dz_1)ds \notag
\\
&+ K E \int_{\kappa(n,t)}^t  |\sigma(x_{\kappa(n,s)}^n)-\sigma(x_s^{n})|^2 ds \notag
\\
&+ K E \int_{\kappa(n,t)}^t \int_Z\Big|\frac{\sigma(x_{\kappa(n,s)}^n)}{1+n^{-1}|x_{\kappa(n,s)}^n|^{4\rho}}-\sigma(x_{\kappa(n,s)}^n)\Big|^2 \nu(dz_1) ds \notag
\\
& + K E \int_{\kappa(n,t)}^t \int_Z\Big|\frac{\sum_{u=1}^d \frac{\partial \sigma(x_{\kappa(n,s)}^n)}{\partial x^{u}} \gamma^{u}(x_{\kappa(n,s)}^n,z_1) }{1+n^{-1}|x_{\kappa(n,s)}^n|^{4\rho}}-\sum_{u=1}^d \frac{\partial \sigma(x_{\kappa(n,s)}^n)}{\partial x^{u}} \gamma^{u}(x_{\kappa(n,s)}^n,z_1)\Big|^2 \nu(dz_1) ds\notag
\\
& + K E \int_{\kappa(n,t)}^t \int_Z\Big|\sum_{u=1}^d \Big\{\frac{\partial \sigma(x_{\kappa(n,s)}^n)}{\partial x^{u}} -\frac{\partial \sigma(x_s^n)}{\partial x^{u}}\Big\} \gamma^{u}(x_{\kappa(n,s)}^n,z_1)\Big|^2 \nu(dz_1) ds \notag
\\
&+K E \int_{\kappa(n,t)}^t \int_Z\Big|\sum_{u=1}^d\frac{\partial \sigma(x_s^n)}{\partial x^{u}}\int_{\kappa(n,s)}^s \sum_{k=1}^m \Gamma_1^{(n),j,u}(x_{\kappa(n,r)}^n,z_1)dw_r^k \Big|^2 \nu(dz_1) ds  \notag
\\
&+K E \int_{\kappa(n,t)}^t \int_Z\Big|\sum_{u=1}^d\frac{\partial \sigma(x_s^n)}{\partial x^{u}}\int_{\kappa(n,s)}^s \int_Z \Gamma_2^u(x_{\kappa(n,r)}^n,z_1,z_2)\tilde{N}(dr,dz_2) \Big|^2 \nu(dz_1) ds  \notag
\\
&+K E \int_{\kappa(n,t)}^t \int_Z\Big|\sum_{u=1}^d\frac{\partial \sigma(x_s^n)}{\partial x^{u}}\int_{\kappa(n,s)}^s \int_Z \Gamma_3^u(x_{\kappa(n,r)}^n,z_1,z_2)N(dr,dz_2)\Big|^2 \nu(dz_1) ds \notag
\end{align}
and then by applying Remarks [\ref{rem:growth}, \ref{rem:growth(n)}] and Assumption A-\ref{as:sde:lip:der}, one gets 
\begin{align}
H_{13}&\leq K E \int_{\kappa(n,t)}^t \int_Z (1+|x_s^n+\gamma^{(n)}(s,x_{\kappa(n,s)}^n,z_1)|+|x_{\kappa(n,s)}^n+\gamma(x_{\kappa(n,s)}^n,z_1)|)^\rho |x_s^n-x_{\kappa(n,s)}^n|^2 \nu(dz_1)ds\notag
\\
& + K E \int_{\kappa(n,t)}^t \int_Z (1+|x_s^n+\gamma^{(n)}(s,x_{\kappa(n,s)}^n,z_1)|+|x_{\kappa(n,s)}^n+\gamma(x_{\kappa(n,s)}^n,z_1)|)^\rho \notag
\\
& \times |\gamma^{(n)}(s,x_{\kappa(n,s)}^n,z_1)-\gamma(x_{\kappa(n,s)}^n,z_1)|^2 \nu(dz_1)ds \notag
\\
&+ K n^{-2} E \int_{\kappa(n,t)}^t \int_Z |x_{\kappa(n,s)}^n|^{8\rho}(1+|x_{\kappa(n,s)}^n+\gamma(x_{\kappa(n,s)}^n,z_1)|)^{\rho+2}\nu(dz_1)ds \notag
\\
&+ K E \int_{\kappa(n,t)}^t (1+|x_{\kappa(n,s)}^n|+|x_s^{n}|)^\rho |x_s^{n}-x_{\kappa(n,s)}^n|^2 ds \notag
\\
&+ K n^{-2} E \int_{\kappa(n,t)}^t |x_{\kappa(n,s)}^n|^{8\rho}(1+|x_{\kappa(n,s)}^n|)^{\rho+2}  ds \notag
%\\
%& + K n^{-2} E \int_{\kappa(n,t)}^t |x_{\kappa(n,s)}^n|^{4\rho} (1+|x_{\kappa(n,s)}^n|)^{\rho+2} ds \notag
\\
& + K E \int_{\kappa(n,t)}^t (1+|x_s^n|+|x_{\kappa(n,s)}^n|)^{\rho-2}|x_s^n-x_{\kappa(n,s)}^n|^2 (1+|x_{\kappa(n,s)}^n|)^2 ds \notag
\\
&+K E \int_{\kappa(n,t)}^t (1+|x_s^n|)^\rho \int_{\kappa(n,s)}^s (1+|x_{\kappa(n,r)}^n|)^{\rho+2}dr  ds  \notag
\\
&+K E \int_{\kappa(n,t)}^t (1+|x_s^n|)^\rho \int_{\kappa(n,s)}^s (1+|x_{\kappa(n,r)}^n|)^2 dr ds  \notag
\end{align}
for any $t\in[0,T]$ and $n\in\mathbb{N}$.  One can then simplify the above expression as below, 
\begin{align}
&H_{13}\leq K  \int_{\kappa(n,t)}^t \{E(1+|x_{\kappa(n,s)}^n|+|x_s^{n}|)^\frac{\rho(2+\delta)}{\delta}\}^\frac{\delta}{2+\delta} \{E|x_s^{n}-x_{\kappa(n,s)}^n|^{2+\delta}\}^\frac{2}{2+\delta} ds \notag
\\
&+ K E \int_{\kappa(n,t)}^t \int_Z (1+|\gamma^{(n)}(s,x_{\kappa(n,s)}^n,z_1)|+|\gamma(x_{\kappa(n,s)}^n,z_1)|)^\rho |x_s^n-x_{\kappa(n,s)}^n|^2 \nu(dz_1)ds\notag
\\
& + K E \int_{\kappa(n,t)}^t (1+|x_s^n|+|x_{\kappa(n,s)}^n|)^\rho \int_Z|\gamma^{(n)}(s,x_{\kappa(n,s)}^n,z_1)-\gamma(x_{\kappa(n,s)}^n,z_1)|^2 \nu(dz_1)ds \notag
\\
& + K E \int_{\kappa(n,t)}^t \int_Z (1+|\gamma^{(n)}(s,x_{\kappa(n,s)}^n,z_1)|+|\gamma(x_{\kappa(n,s)}^n,z_1)|)^\rho |\gamma^{(n)}(s,x_{\kappa(n,s)}^n,z_1)-\gamma(x_{\kappa(n,s)}^n,z_1)|^2 \nu(dz_1)ds \notag
\\
&+ K n^{-3} +K n^{-2} \notag
\end{align}
which can also be estimated by 
\begin{align}
H_{13}&\leq K  n^{-1-\frac{2}{2+\delta}} + K \int_{\kappa(n,t)}^t \{E |x_s^n-x_{\kappa(n,s)}^n|^{2+\delta}\}^\frac{2}{2+\delta}  \notag
\\
&\times \Big\{E\Big(\int_Z (1+|\gamma^{(n)}(s,x_{\kappa(n,s)}^n,z_1)|+|\gamma(x_{\kappa(n,s)}^n,z_1)|)^\rho  \nu(dz_1)\Big)^\frac{2+\delta}{\delta}\Big\}^\frac{\delta}{2+\delta}
 ds\notag
\\
& + K E \int_{\kappa(n,t)}^t (1+|x_s^n|+|x_{\kappa(n,s)}^n|)^\rho \int_Z|\gamma^{(n)}(s,x_{\kappa(n,s)}^n,z_1)-\gamma(x_{\kappa(n,s)}^n,z_1)|^2 \nu(dz_1)ds \notag
\\
& + K E \int_{\kappa(n,t)}^t \int_Z (1+|\gamma^{(n)}(s,x_{\kappa(n,s)}^n,z_1)|+|\gamma(x_{\kappa(n,s)}^n,z_1)|)^\rho |\gamma^{(n)}(s,x_{\kappa(n,s)}^n,z_1)-\gamma(x_{\kappa(n,s)}^n,z_1)|^2 \nu(dz_1)ds \notag
\end{align}
for any $t\in [0,T]$. Moreover, one obtains, 
\begin{align}
&H_{13}\leq K  n^{-1-\frac{2}{2+\delta}} \notag
\\
& + K E \int_{\kappa(n,t)}^t (1+|x_s^n|+|x_{\kappa(n,s)}^n|)^\rho \int_Z\Big|\int_{\kappa(n,s)}^s \sum_{k=1}^{m}\Gamma_1^{(n),k} (x_{\kappa(n,r)}^n,z_1)dw_r^k\Big|^2\nu(dz_1)ds \notag
\\
& + K E \int_{\kappa(n,t)}^t (1+|x_s^n|+|x_{\kappa(n,s)}^n|)^\rho \int_Z\Big|\int_{\kappa(n,s)}^s \int_Z \Gamma_2 (x_{\kappa(n,r)}^n,z_1,z_2)\tilde{N}(dr,dz_2)\Big|^2\nu(dz_1)ds \notag
\\
& + K E \int_{\kappa(n,t)}^t (1+|x_s^n|+|x_{\kappa(n,s)}^n|)^\rho \int_Z\Big|\int_{\kappa(n,s)}^s \int_Z \Gamma_3 (x_{\kappa(n,r)}^n,z_1,z_2)N(dr,dz_2)\Big|^2\nu(dz_1)ds \notag
\\
& + K E \int_{\kappa(n,t)}^t \int_Z (1+|\gamma^{(n)}(s,x_{\kappa(n,s)}^n,z_1)|+|\gamma(x_{\kappa(n,s)}^n,z_1)|)^\rho \Big|\int_{\kappa(n,s)}^s \sum_{k=1}^{m}\Gamma_1^{(n),k} (x_{\kappa(n,r)}^n,z_1)dw_r^k
\Big|^2 \nu(dz_1)ds \notag
\\
& + K E \int_{\kappa(n,t)}^t \int_Z (1+|\gamma^{(n)}(s,x_{\kappa(n,s)}^n,z_1)|+|\gamma(x_{\kappa(n,s)}^n,z_1)|)^\rho \Big|\int_{\kappa(n,s)}^s \int_Z \Gamma_2 (x_{\kappa(n,r)}^n,z_1,z_2)\tilde{N}(dr,dz_2)\Big|^2 \nu(dz_1)ds \notag
\\
& + K E \int_{\kappa(n,t)}^t \int_Z (1+|\gamma^{(n)}(s,x_{\kappa(n,s)}^n,z_1)|+|\gamma(x_{\kappa(n,s)}^n,z_1)|)^\rho \Big|\int_{\kappa(n,s)}^s \int_Z \Gamma_3 (x_{\kappa(n,r)}^n,z_1,z_2)N(dr,dz_2)\Big|^2 \nu(dz_1)ds \notag
\end{align}
and due to H\"{o}lder's inequality, 
\begin{align}
&H_{13}\leq K  n^{-1-\frac{2}{2+\delta}} + K  \int_{\kappa(n,t)}^t \{E(1+|x_s^n|+|x_{\kappa(n,s)}^n|)^{\frac{\rho(2+\delta)}{ \delta}}\}^\frac{\delta}{2+\delta} \notag 
\\
&\times \int_Z\Big\{E\Big|\int_{\kappa(n,s)}^s \sum_{k=1}^{m}\Gamma_1^{(n),k} (x_{\kappa(n,r)}^n,z_1)dw_r^k\Big|^{2+\delta}\Big\}^\frac{2}{2+\delta}\nu(dz_1)ds \notag
\\
& + K  \int_{\kappa(n,t)}^t \{E(1+|x_s^n|+|x_{\kappa(n,s)}^n|)^{\frac{\rho(2+\delta)}{ \delta}}\}^\frac{\delta}{2+\delta} \notag
\\
&\times \int_Z\Big\{E\Big|\int_{\kappa(n,s)}^s \int_Z \Gamma_2 (x_{\kappa(n,r)}^n,z_1,z_2)\tilde{N}(dr,dz_2)\Big|^{2+\delta}\Big\}^\frac{2}{2+\delta}\nu(dz_1)ds \notag
\\
& + K E \int_{\kappa(n,t)}^t \{E(1+|x_s^n|+|x_{\kappa(n,s)}^n|)^{\frac{\rho(2+\delta)}{ \delta}}\}^\frac{\delta}{2+\delta} \notag
\\
&\times \int_Z\Big\{E\Big|\int_{\kappa(n,s)}^s \int_Z \Gamma_3 (x_{\kappa(n,r)}^n,z_1,z_2)N(dr,dz_2)\Big|^{2+\delta}\Big\}^\frac{2}{2+\delta}\nu(dz_1)ds \notag
\\
& + K  \int_{\kappa(n,t)}^t \int_Z \{E(1+|\gamma^{(n)}(s,x_{\kappa(n,s)}^n,z_1)|+|\gamma(x_{\kappa(n,s)}^n,z_1)|)^{\frac{\rho(2+\delta)}{ \delta}}\}^\frac{\delta}{2+\delta} \notag
\\
&\times \Big\{E\Big|\int_{\kappa(n,s)}^s \sum_{k=1}^{m}\Gamma_1^{(n),k} (x_{\kappa(n,r)}^n,z_1)dw_r^k
\Big|^{2+\delta}\Big\}^\frac{2}{2+\delta} \nu(dz_1)ds \notag
\\
& + K  \int_{\kappa(n,t)}^t \int_Z \{E(1+|\gamma^{(n)}(s,x_{\kappa(n,s)}^n,z_1)|+|\gamma(x_{\kappa(n,s)}^n,z_1)|)^{\frac{\rho(2+\delta)}{ \delta}}\}^\frac{\delta}{2+\delta} \notag
\\
&\times \Big\{E\Big|\int_{\kappa(n,s)}^s \int_Z \Gamma_2 (x_{\kappa(n,r)}^n,z_1,z_2)\tilde{N}(dr,dz_2)\Big|^{2+\delta}\Big\}^\frac{2}{2+\delta} \nu(dz_1)ds \notag
\\
& + K  \int_{\kappa(n,t)}^t \int_Z \{E(1+|\gamma^{(n)}(s,x_{\kappa(n,s)}^n,z_1)|+|\gamma(x_{\kappa(n,s)}^n,z_1)|)^{\frac{\rho(2+\delta)}{ \delta}}\}^\frac{\delta}{2+\delta} \notag
\\
&\times \Big\{E\Big|\int_{\kappa(n,s)}^s \int_Z \Gamma_3 (x_{\kappa(n,r)}^n,z_1,z_2)N(dr,dz_2)\Big|^{2+\delta}\Big\}^\frac{2}{2+\delta} \nu(dz_1)ds \notag
\end{align}
which on the  application an elementary inequality of stochastic integrals, Remark \ref{rem:growth(n)}, Lemma \ref{lem:mb:scheme} yields, 
\begin{align}
&H_{13}\leq K  n^{-1-\frac{2}{2+\delta}} + K  \int_{\kappa(n,t)}^t  \int_Z\Big\{n^{-\frac{\delta}{2}}E\int_{\kappa(n,s)}^s \Big|\sum_{k=1}^{m}\Gamma_1^{(n),k} (x_{\kappa(n,r)}^n,z_1)\Big|^{2+\delta}dr\Big\}^\frac{2}{2+\delta}\nu(dz_1)ds \notag
\\
& + K  \int_{\kappa(n,t)}^t  \int_Z\Big\{n^{-\frac{\delta}{2}}E\int_{\kappa(n,s)}^s \int_Z |\Gamma_2 (x_{\kappa(n,r)}^n,z_1,z_2)|^{2+\delta}\nu(dz_2)dr\notag
\\
&+E\int_{\kappa(n,s)}^s \int_Z |\Gamma_2 (x_{\kappa(n,r)}^n,z_1,z_2)|^{2+\delta}\nu(dz_2)dr\Big\}^\frac{2}{2+\delta}\nu(dz_1)ds \notag
\\
& + K  \int_{\kappa(n,t)}^t \int_Z\Big\{n^{-\frac{\delta}{2}}E\int_{\kappa(n,s)}^s \int_Z |\Gamma_3 (x_{\kappa(n,r)}^n,z_1,z_2)|^{2+\delta} \nu(dz_2)dr \notag
\\
&+E\int_{\kappa(n,s)}^s \int_Z |\Gamma_3 (x_{\kappa(n,r)}^n,z_1,z_2)|^{2+\delta} \nu(dz_2)dr \notag
\\
& + n^{-1-\delta}E\int_{\kappa(n,s)}^s \int_Z |\Gamma_3 (x_{\kappa(n,r)}^n,z_1,z_2)|^{2+\delta}\nu(dz_2)dr\Big\}^\frac{2}{2+\delta}\nu(dz_1)ds \notag
\\
&\leq K  n^{-1-\frac{2}{2+\delta}}  \label{eq:H13}
\end{align}
for any $t\in [0,T]$ and $n\in\mathbb{N}$. Thus, by substituting values from equations \eqref{eq:H1} to \eqref{eq:H13} in equation \eqref{eq:H1+H13}, one completes the proof. 
\end{proof}
%-------------------------------------------------------------
Let us first define, 
\begin{align*}
e_t^n:=x_t-x_t^n & =  \int_{0}^{t}\big\{b(x_s)-b^{(n)}(x_{\kappa(n,s)}^n)\big\}ds+\int_{0}^{t}\big\{\sigma(x_s)-\sigma^{(n)}(s,x_{\kappa(n,s)}^n)\big\}dw_s
\\
& + \int_{0}^{t}\int_Z \big\{\gamma(x_s,z)- \gamma^{(n)}(s,x_{\kappa(n,s)}^n,z)\big\}\tilde N(ds, dz)
\end{align*}
almost surely for any $t \in [0, T]$ and $n \in \mathbb{N}$. 
%-----------------------------------------------------------------
%                Lemma : b - b(n): Rate
%-----------------------------------------------------------------
\begin{lem} \label{lem:b-b(n):rate}
Let Assumptions A-\ref{as:sde:ini} to A-\ref{as:sde:lip:der} be satisfied. Then, the following holds, 
\begin{align*}
E\int_0^t e_s^n (b(x_s^n) -  b^{(n)}(x_{\kappa(n,s)}^n))ds  \leq K \int_0^t \sup_{0 \leq r\leq s}E|e_s^n|^2ds+Kn^{-\frac{3}{2}-\frac{1}{2+\delta}}
\end{align*}
for any $t\in [0,T]$ and $n\in \mathbb{N}$ where positive constant $K$ does not depend on $n$. 
\end{lem}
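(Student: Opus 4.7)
The plan is to mirror the It\^o-expansion strategy of Lemmas \ref{lem:gamma-gamma(n):rate} and \ref{lem:sigma-sigma(n):rate}, now applied to $b$ and using those two lemmas as essential inputs rather than re-deriving anything. First I would split
$b(x_s^n)-b^{(n)}(x_{\kappa(n,s)}^n)=\{b(x_s^n)-b(x_{\kappa(n,s)}^n)\}+\{b(x_{\kappa(n,s)}^n)-b^{(n)}(x_{\kappa(n,s)}^n)\}$.
The second (``taming'') bracket equals $b(x_{\kappa(n,s)}^n)\cdot n^{-1}|x_{\kappa(n,s)}^n|^{4\rho}/(1+n^{-1}|x_{\kappa(n,s)}^n|^{4\rho})$, pointwise of size $Kn^{-1}(1+|x_{\kappa(n,s)}^n|)^{5\rho+1}$ by Remark \ref{rem:growth}; pairing with $e_s^n$, Young's inequality and the moment bound of Lemma \ref{lem:mb:scheme} contribute $K\int_0^t E|e_s^n|^2\,ds+Kn^{-2}$, which already fits within the asserted estimate.

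For the first bracket I would apply It\^o's formula to $b(\cdot)$ along the scheme, writing $b(x_s^n)-b(x_{\kappa(n,s)}^n)=D_s+M_s^w+M_s^N$ with a drift-type Lebesgue integral $D_s$ (integrands $\nabla b\cdot b^{(n)}$, the Hessian contracted against $\sigma^{(n)}(\sigma^{(n)})^*$, and the Poisson compensator of $b(\cdot+\gamma^{(n)})-b(\cdot)$), a Wiener martingale $M_s^w$ with integrand $\nabla b(x_r^n)\sigma^{(n)}(r,x_{\kappa(n,r)}^n)$, and a compensated-Poisson martingale $M_s^N$ with integrand $b(x_r^n+\gamma^{(n)}(\cdots))-b(x_r^n)$. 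Decompose $e_s^n=e_{\kappa(n,s)}^n+\tilde e_s$. The tower property makes $E\int_0^t e_{\kappa(n,s)}^n(M_s^w+M_s^N)\,ds=0$ since $e_{\kappa(n,s)}^n\in\mathscr{F}_{\kappa(n,s)}$; the piece $E\int_0^t e_{\kappa(n,s)}^n D_s\,ds$ is handled by Cauchy--Schwarz against the bound $(E|D_s|^2)^{1/2}\le Kn^{-1}$ (from Remarks \ref{rem:growth}, \ref{rem:growth(n)} and Lemma \ref{lem:mb:scheme}), yielding $K\int_0^t E|e_s^n|^2\,ds+Kn^{-2}$.

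The crucial piece is $E\int_0^t \tilde e_s(M_s^w+M_s^N+D_s)\,ds$. Expanding $\tilde e_s$ via the SDE-minus-scheme equation into its drift, Wiener $\int(\sigma(x_r)-\sigma^{(n)})dw_r$ and compensated-Poisson $\int\int(\gamma-\gamma^{(n)})\tilde N$ components over $[\kappa(n,s),s]$, the matching Wiener--Wiener It\^o-isometry pair produces $E\int_{\kappa(n,s)}^s(\sigma(x_r)-\sigma^{(n)})\cdot\nabla b(x_r^n)\sigma^{(n)}\,dr$, which by Cauchy--Schwarz and Lemma \ref{lem:sigma-sigma(n):rate} is bounded by $Kn^{-1}\cdot n^{-1/2-1/(2+\delta)}=Kn^{-3/2-1/(2+\delta)}$, precisely the rate claimed. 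The matching Poisson--Poisson cross, bounded via Lemma \ref{lem:gamma-gamma(n):rate}, yields only $Kn^{-2}$. The non-matching cross terms (drift against either martingale, and Wiener against Poisson) are controlled by the Fubini product-rule identity $E[A_sM_s]=E\int_{\kappa(n,s)}^sM_r f_r\,dr$ combined with H\"older's inequality and the $(2+\delta)$-moment estimates of Remark \ref{rem:growth(n)}, in direct analogy with the treatment of $H_6$--$H_{12}$ in the proof of Lemma \ref{lem:sigma-sigma(n):rate}. The residual $\tilde e_s D_s$ contribution is absorbed into $\int_0^t\sup_{r\le s}E|e_r^n|^2\,ds+Kn^{-2}$ through Young's inequality, using $E|\tilde e_s|^2\le 4\sup_{r\le s}E|e_r^n|^2$.

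The main obstacle is securing the full exponent $-3/2-1/(2+\delta)$ on the Wiener--Wiener cross term: a naive Cauchy--Schwarz bypassing Lemma \ref{lem:sigma-sigma(n):rate} would only give $n^{-3/2}$, so the refined $(2+\delta)$-moment bound on $\sigma-\sigma^{(n)}$ is indispensable here. A secondary technicality is the drift-against-jump-martingale coupling, where the $(2+\delta)$-moment estimates of Remark \ref{rem:growth(n)} (as in $H_{11}$--$H_{12}$ of Lemma \ref{lem:sigma-sigma(n):rate}) are needed in order not to lose a factor of $n^{1/(2+\delta)}$ relative to the target rate.
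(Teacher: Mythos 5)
Your proposal is correct and follows essentially the same route as the paper: It\^o-expand $b$ along the scheme, kill the $e^n_{\kappa(n,s)}$-against-martingale contribution by conditioning on $\mathscr{F}_{\kappa(n,s)}$, and extract the dominant $Kn^{-\frac{3}{2}-\frac{1}{2+\delta}}$ term from the Wiener--Wiener cross between $\sigma(x^n_r)-\sigma^{(n)}(r,x^n_{\kappa(n,r)})$ and the $\nabla b\,\sigma^{(n)}$ martingale, using Lemma \ref{lem:sigma-sigma(n):rate} together with the $E|R(s,x^n_{\kappa(n,s)})|^2\leq Kn^{-1}$ bound on the It\^o-correction martingale, exactly as in the paper's estimate of $E_5$. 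The only cosmetic differences are that you treat the taming correction $b(x^n_{\kappa(n,s)})-b^{(n)}(x^n_{\kappa(n,s)})$ explicitly (the paper's write-up passes over it) and that you propose to bound the Wiener--Poisson cross terms by H\"older's inequality where the paper simply observes that they vanish by orthogonality of the continuous and compensated-jump martingales.
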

\begin{proof} 
By the application of It\^{o}'s formula, 
\begin{align}
E\int_0^te_s^n\{b(x_s^n)&-b(x_{\kappa(n,s)}^n)\}ds=E\int_0^te_s^n\sum_{u=1}^{d}  \int_{\kappa(n,s)}^{s}\frac{\partial b(x_r^n)}{\partial x^{u}}b^{(n),u}(x_{\kappa(n,r)}^n)dr ds \notag
\\
&+\frac{1}{2}E\int_0^te_s^n\sum_{u}^{d} \sum_{v=1}^{d} \int_{\kappa(n,s)}^{s}\frac{\partial^2 b(x_r^n)}{\partial x^{u} \partial x^{v}} \sum_{j=1}^{m}\sigma^{(n),uj}(r,x_{\kappa(n,r)}^n) \sigma^{(n),vj}(r,x_{\kappa(n,r)}^n)dr ds \notag
\\
&+ E \int_0^te_s^n\sum_{u=1}^{d} \int_{\kappa(n,s)}^s \frac{\partial b(x_r^n)}{\partial x^{u}} \sum_{j=1}^m \sigma^{(n), uj}(r,x_{\kappa(n,r)}^n) dw_r^j ds \notag
\\
&+E \int_0^te_s^n \sum_{u=1}^{d} \int_{\kappa(n,s)}^s \int_Z \frac{\partial b(x_r^n)}{\partial x^{u}} \gamma^{(n), u}(r,x_{\kappa(n,r)}^n,z_1) \tilde{N}(dr,dz_1) ds \notag
\\
&+E \int_0^te_s^n\int_{\kappa(n,s)}^s \int_Z  \Big\{b(x_r^n+\gamma^{(n)}(r,x_{\kappa(n,r)}^n,z_1))-b(x_r^{n}) \notag
\\
&\qquad-\sum_{u=1}^d \frac{\partial b(x_r^n)}{\partial x^{u}} \gamma^{(n),u}(r,x_{\kappa(n,r)}^n,z_1) \Big\} \tilde{N}(dr,dz_1) ds \notag
\\
&+E \int_0^te_s^n\int_{\kappa(n,s)}^s \int_Z  \Big\{b(x_r^n+\gamma^{(n)}(r,x_{\kappa(n,r)}^n,z_1))-b(x_r^{n}) \notag
\\
&\qquad-\sum_{u=1}^d \frac{\partial b(x_r^n)}{\partial x^{u}} \gamma^{(n),u}(r,x_{\kappa(n,r)}^n,z_1) \Big\} \nu(dz_1)dr ds \label{eq:b:ito}
\end{align}
for any $t\in [0,T]$ and $n\in\mathbb{N}$. Let us now introduce a notation, 
\begin{align}
R(s,x_{\kappa(n,s)}^n):=&\sum_{u=1}^{d} \int_{\kappa(n,s)}^s \frac{\partial b(x_r^n)}{\partial x^{u}} \sum_{j=1}^m\sigma^{(n), uj}(r,x_{\kappa(n,r)}^n) dw_r^j \notag
\\
& +\sum_{u=1}^{d} \int_{\kappa(n,s)}^s \int_Z \frac{\partial b(x_r^n)}{\partial x^{u}} \gamma^{(n), u}(r,x_{\kappa(n,r)}^n,z_1) \tilde{N}(dr,dz_1)\notag
\\
&+\int_{\kappa(n,s)}^s \int_Z  \Big\{b(x_r^n+\gamma^{(n)}(r,x_{\kappa(n,r)}^n,z_1))-b(x_r^{n}) \notag
\\
& -\sum_{u=1}^d \frac{\partial b(x_r^n)}{\partial x^{u}} \gamma^{(n),u}(r,x_{\kappa(n,r)}^n,z_1) \Big\} \tilde{N}(dr,dz_1) \label{eq:R}
\end{align}
almost surely for any $s\in[0,T]$ and $n\in\mathbb{N}$. It is clear that $R(s,x_{\kappa(n,s)}^n)$ is an $\mathscr{F}_s$-measurable for every $s\in [0,T]$.  Then, one can write equation \eqref{eq:b:ito} as 
\begin{align}
E\int_0^te_s^n\{b(x_s^n)&-b(x_{\kappa(n,s)}^n)\}ds=E\int_0^te_s^n\sum_{u=1}^{d}  \int_{\kappa(n,s)}^{s}\frac{\partial b(x_r^n)}{\partial x^{u}}b^{(n),u}(x_{\kappa(n,r)}^n)dr ds \notag
\\
&+E\int_0^te_s^n\sum_{u}^{d} \sum_{v=1}^{d} \int_{\kappa(n,s)}^{s}\frac{\partial^2 b(x_r^n)}{\partial x^{u} \partial x^{v}} \sum_{j=1}^{m} \sigma^{(n),uj}(r,x_{\kappa(n,r)}^n) \sigma^{(n),vj}(r,x_{\kappa(n,r)}^n)dr ds \notag
\\
&+ E \int_0^te_s^n R(s,x_{\kappa(n,s)}^n) ds \notag
\\
&+E \int_0^te_s^n\int_{\kappa(n,s)}^s \int_Z  \Big\{b(x_r^n+\gamma^{(n)}(r,x_{\kappa(n,r)}^n,z_1))-b(x_r^{n}) \notag
\\
&\qquad-\sum_{u=1}^d \frac{\partial b(x_r^n)}{\partial x^{u}} \gamma^{(n),u}(r,x_{\kappa(n,r)}^n,z_1) \Big\} \nu(dz_1)dr ds 
\end{align} 
which can further be written as 
\begin{align}
E\int_0^te_s^n&\{b(x_s^n)-b(x_{\kappa(n,s)}^n)\}ds=E\int_0^te_s^n\sum_{u=1}^{d}  \int_{\kappa(n,s)}^{s}\frac{\partial b(x_r^n)}{\partial x^{u}}b^{(n),u}(x_{\kappa(n,r)}^n)dr ds \notag
\\
&+E\int_0^te_s^n\sum_{u}^{d} \sum_{v=1}^{d} \sum_{j=1}^{m}\int_{\kappa(n,s)}^{s}\frac{\partial^2 b(x_r^n)}{\partial x^{u} \partial x^{v}} \sigma^{(n),uj}(r,x_{\kappa(n,r)}^n) \sigma^{(n),vj}(r,x_{\kappa(n,r)}^n)dr ds \notag
\\
&+ E \int_0^te_{\kappa(n,s)}^n R(s,x_{\kappa(n,s)}^n) ds \notag
\\
&+ E \int_0^t\int_{\kappa(n,s)}^s \{\sigma(x_r)-\sigma(x_{r}^n)\}dw_r R(s,x_{\kappa(n,s)}^n) ds \notag
\\
&+ E \int_0^t\int_{\kappa(n,s)}^s \{\sigma(x_r^n)-\sigma^{(n)}(r,x_{\kappa(n,r)}^n)\}dw_r R(s,x_{\kappa(n,s)}^n) ds \notag
\\
&+ E \int_0^t\int_{\kappa(n,s)}^s \int_Z \{\gamma(x_r,z)-\gamma(x_r^n,z)\}\tilde{N}(dr,dz) R(s,x_{\kappa(n,s)}^n) ds \notag
\\
&+ E \int_0^t\int_{\kappa(n,s)}^s \int_Z \{\gamma(x_r^n,z)-\gamma^{(n)}(r,x_{\kappa(n,r)}^n,z)\}\tilde{N}(dr,dz) R(s,x_{\kappa(n,s)}^n) ds \notag
\\
&+E \int_0^te_s^n\int_{\kappa(n,s)}^s \int_Z  \Big\{b(x_r^n+\gamma^{(n)}(r,x_{\kappa(n,r)}^n,z_1))-b(x_r^{n}) \notag
\\
&\qquad-\sum_{u=1}^d \frac{\partial b(x_r^n)}{\partial x^{u}} \gamma^{(n),u}(r,x_{\kappa(n,r)}^n,z_1) \Big\} \nu(dz_1)dr ds \notag
\\
=:& \, E_1+E_2+E_3+E_4+E_5+E_6+E_7+E_8 \label{eq:E1+E8}
\end{align}
for any $t\in [0,T]$ and $n\in\mathbb{N}$. For estimating $E_1$, one uses Young's inequality, Remarks [\ref{rem:growth}, \ref{rem:growth(n)}] and Lemma \ref{lem:mb:scheme} to obtain the following estimates, 
\begin{align}
E_1&:= E\int_0^te_s^n\sum_{u=1}^{d}  \int_{\kappa(n,s)}^{s}\frac{\partial b(x_r^n)}{\partial x^{u}}b^{(n),u}(x_{\kappa(n,r)}^n)dr ds \notag
\\
&\leq K E\int_0^t |e_s^n|^2ds + K E\int_0^t\Big|\sum_{u=1}^{d}  \int_{\kappa(n,s)}^{s}\frac{\partial b(x_r^n)}{\partial x^{u}}b^{(n),u}(x_{\kappa(n,r)}^n)dr \Big|^2 ds \notag
%\\
%&\leq K E\int_0^t |e_s^n|^2ds + n^{-1}K E\int_0^t\sum_{u=1}^{d}  \int_{\kappa(n,s)}^{s}\Big|\frac{\partial b(x_r^n)}{\partial x^{u}}\Big|^2 |b^{(n),u}(x_{\kappa(n,r)}^n)|^2 dr  ds \notag
\\
&\leq K\int_0^t \sup_{0 \leq r \leq s}E|e_r^n|^2ds + K n^{-2} \label{eq:E1}
\end{align}
for any $t\in [0,T]$ and $n\in\mathbb{N}$. Similarly, one uses Young's inequality, Remarks [\ref{rem:growth}, \ref{rem:growth(n)}] and Lemma \ref{lem:mb:scheme}, 
\begin{align}
E_2 &:= \frac{1}{2}E\int_0^te_s^n\sum_{u}^{d} \sum_{v=1}^{d} \sum_{j=1}^{m}\int_{\kappa(n,s)}^{s}\frac{\partial^2 b(x_r^n)}{\partial x^{u} \partial x^{v}} \sigma^{(n),uj}(r,x_{\kappa(n,r)}^n) \sigma^{(n),vj}(r,x_{\kappa(n,r)}^n)dr ds \notag
\\
&\leq K E\int_0^t |e_s^n|^2 ds \notag
\\
& + K E\int_0^t \Big|\sum_{u}^{d} \sum_{v=1}^{d} \sum_{j=1}^{m}\int_{\kappa(n,s)}^{s}\frac{\partial^2 b(x_r^n)}{\partial x^{u} \partial x^{v}} \sigma^{(n),uj}(r,x_{\kappa(n,r)}^n) \sigma^{(n),vj}(r,x_{\kappa(n,r)}^n)dr\Big|^2 ds \notag 
\\
%&\leq K E\int_0^t |e_s^n|^2 ds + n^{-1}K E\int_0^t \sum_{u}^{d} \sum_{v=1}^{d} \sum_{j=1}^{m}\int_{\kappa(n,s)}^{s}\Big|\frac{\partial^2 b(x_r^n)}{\partial x^{u} \partial x^{v}}\Big|^2 | \sigma^{(n),uj}(r,x_{\kappa(n,r)}^n) \sigma^{(n),vj}(r,x_{\kappa(n,r)}^n)|^2 dr ds \notag 
%\\
&\leq K \int_0^t \sup_{0 \leq r \leq s}E|e_r^n|^2 ds + K n^{-2}\label{eq:E2} 
\end{align} 
for any $t\in [0,T]$ and $n\in\mathbb{N}$. One easily observes that $E_3$ can be estimated by,
\begin{align}
E_3&:=E\int_0^te_{\kappa(n,s)}^n R(s,x_{\kappa(n,s)}^n)ds = E\int_0^te_{\kappa(n,s)}^n E\big(R(s,x_{\kappa(n,s)}^n)\big|\mathscr{F}_{\kappa(n,s)}\big)ds =0 \label{eq:E3} 
\end{align} 
for any $t\in[0,T]$ and $n\in\mathbb{N}$. Furthermore, for estimating $E_4$, one proceeds as follows,  
\begin{align}
E_4 &:= E \int_0^t\int_{\kappa(n,s)}^s \{\sigma(x_r)-\sigma(x_{r}^n)\}dw_r R(s,x_{\kappa(n,s)}^n) ds \notag
\\
&=E \int_0^t\int_{\kappa(n,s)}^s \{\sigma(x_r)-\sigma(x_{r}^n)\}dw_r \sum_{u=1}^{d} \int_{\kappa(n,s)}^s \frac{\partial b(x_r^n)}{\partial x^{u}} \sum_{j=1}^m\sigma^{(n), uj}(r,x_{\kappa(n,r)}^n) dw_r^j ds \notag
\\
&+E \int_0^t\int_{\kappa(n,s)}^s \{\sigma(x_r)-\sigma(x_{r}^n)\}dw_r \sum_{u=1}^{d} \int_{\kappa(n,s)}^s \int_Z \frac{\partial b(x_r^n)}{\partial x^{u}} \gamma^{(n), u}(r,x_{\kappa(n,r)}^n,z_1) \tilde{N}(dr,dz_1) ds\notag
\\
&+E \int_0^t\int_{\kappa(n,s)}^s \{\sigma(x_r)-\sigma(x_{r}^n)\}dw_r \int_{\kappa(n,s)}^s \int_Z  \Big\{b(x_r^n+\gamma^{(n)}(r,x_{\kappa(n,r)}^n,z_1))-b(x_r^{n}) \notag
\\
& \qquad -\sum_{u=1}^d \frac{\partial b(x_r^n)}{\partial x^{u}} \gamma^{(n),u}(r,x_{\kappa(n,r)}^n,z_1) \Big\} \tilde{N}(dr,dz_1) ds \notag
\end{align} 
for any $t\in[0,T]$ and $n\in\mathbb{N}$. Notice that processes $w$ and $N$ are independent, hence second and third terms on the right hand side of the above equation are zero.  Thus, one obtains,  
\begin{align}
E_4&\leq K E \int_0^t\int_{\kappa(n,s)}^s |\sigma(x_r)-\sigma(x_{r}^n)| \sum_{u=1}^{d}\sum_{j=1}^m  \Big|\frac{\partial b(x_r^n)}{\partial x^{u}}\Big| |\sigma^{(n), uj}(r,x_{\kappa(n,r)}^n)| dr ds \notag
\end{align}
which on using Remarks [\ref{rem:growth}, \ref{rem:growth(n)}], Young's inequality, H\"{o}lder's inequality and Lemma \ref{lem:mb:scheme} gives, 
\begin{align}
E_4& \leq K E \int_0^t\int_{\kappa(n,s)}^s (1+|x_r|+|x_{r}^n|)^\frac{\rho}{2}|x_r-x_{r}^n| (1+|x_r^n|)^\rho |\sigma^{(n)}(r,x_{\kappa(n,r)}^n)| dr ds \notag
\\
&\leq K n^{2\alpha} E \int_0^t\int_{\kappa(n,s)}^s |x_r-x_{r}^n|^2  dr ds\notag
\\
&+K n^{-2\alpha} E \int_0^t\int_{\kappa(n,s)}^s (1+|x_r|+|x_{r}^n|)^\rho (1+|x_r^n|)^{2\rho} |\sigma^{(n)}(r,x_{\kappa(n,r)}^n)|^2 dr ds  \notag
\\
&\leq K n^{2\alpha-1}  \int_0^t \sup_{0 \leq r \leq s}E|e_{r}^n|^2 ds+K n^{-2\alpha-1}   \notag
\end{align}
and then one takes $\alpha=1/2$ to obtain the following estimates, 
\begin{align}
E_4 \leq K   \int_0^t \sup_{0 \leq r \leq s}E|e_{r}^n|^2 ds+K n^{-2}   \label{eq:E4}
\end{align}
for any $t\in[0,T]$ and $n\in\mathbb{N}$. Before estimating rest of the terms on the right hand side of equation \eqref{eq:E1+E7}, one proceeds as follows, 
\begin{align}
E|R(s,x_{\kappa(n,s)}^n)|^2 &\leq K E\Big|\sum_{u=1}^{d} \int_{\kappa(n,s)}^s \frac{\partial b(x_r^n)}{\partial x^{u}} \sum_{j=1}^m \sigma^{(n), uj}(r,x_{\kappa(n,r)}^n) dw_r^j\Big|^2 \notag
\\
& + E\Big|\sum_{u=1}^{d} \int_{\kappa(n,s)}^s \int_Z \frac{\partial b(x_r^n)}{\partial x^{u}} \gamma^{(n), u}(r,x_{\kappa(n,r)}^n,z_1) \tilde{N}(dr,dz_1)\Big|^2 \notag
\\
&+K E\Big|\int_{\kappa(n,s)}^s \int_Z  \Big\{b(x_r^n+\gamma^{(n)}(r,x_{\kappa(n,r)}^n,z_1))-b(x_r^{n})\notag
\\
&-\sum_{u=1}^d \frac{\partial b(x_r^n)}{\partial x^{u}} \gamma^{(n),u}(r,x_{\kappa(n,r)}^n,z_1) \Big\} \tilde{N}(dr,dz_1)\Big|^2 \notag
\end{align}
which on the application of an elementary inequality of stochastic integral gives 
\begin{align}
E|R(s,x_{\kappa(n,s)}^n)|^2 &\leq K E\sum_{u=1}^{d}\sum_{j=1}^m \int_{\kappa(n,s)}^s \Big|\frac{\partial b(x_r^n)}{\partial x^{u}}\Big|^2 |\sigma^{(n), uj}(r,x_{\kappa(n,r)}^n)|^2 dr \notag
\\
&+ E\sum_{u=1}^{d} \int_{\kappa(n,s)}^s \int_Z \Big|\frac{\partial b(x_r^n)}{\partial x^{u}} \Big|^2 |\gamma^{(n), u}(r,x_{\kappa(n,r)}^n,z_1)|^2 \nu(dz_1) dr \notag
\\
&+KE\int_{\kappa(n,s)}^s \int_Z  \Big|b(x_r^n+\gamma^{(n)}(r,x_{\kappa(n,r)}^n,z_1))-b(x_r^{n})\notag
\\
&-\sum_{u=1}^d \frac{\partial b(x_r^n)}{\partial x^{u}} \gamma^{(n),u}(r,x_{\kappa(n,r)}^n,z_1)\Big|^2 \nu(dz_1)dr  \notag
\end{align}
and then due to Remarks [\ref{rem:growth}, \ref{rem:growth(n)}], Lemma \ref{lem:mb:scheme}, one obtains
\begin{align}
E|R(s,x_{\kappa(n,s)}^n)|^2 &\leq K E \int_{\kappa(n,s)}^s (1+|x_r^n|)^{2\rho}  |\sigma^{(n)}(r,x_{\kappa(n,r)}^n)|^2 dr \notag
\\
&+ E \int_{\kappa(n,s)}^s \int_Z (1+|x_r^n|)^{2\rho} |\gamma^{(n)}(r,x_{\kappa(n,r)}^n,z_1)|^2 \nu(dz_1) dr \notag
\\
+KE\int_{\kappa(n,s)}^s \int_Z  &(1+|x_r^n+\gamma^{(n)}(r,x_{\kappa(n,r)}^n,z_1)|+|x_r^{n}|)^{2\rho-2} |\gamma^{(n)}(r,x_{\kappa(n,r)}^n,z_1)|^4 \nu(dz_1)dr  \notag
\\
&\leq K n^{-1} \label{eq:R:rate}
\end{align}
for any $s\in[0,T]$ and $n\in\mathbb{N}$.  For estimating $E_5$, one applies H\"{o}lder's inequality, an elementary inequality of stochastic integrals and Lemma \ref{lem:sigma-sigma(n):rate} to obtain the following estimates,  
\begin{align}
E_5&:=E \int_0^t\int_{\kappa(n,s)}^s \{\sigma(x_r^n)-\sigma^{(n)}(r,x_{\kappa(n,r)}^n)\}dw_r R(s,x_{\kappa(n,s)}^n) ds \notag
\\
&\leq  \int_0^t \Big\{E\Big|\int_{\kappa(n,s)}^s \{\sigma(x_r^n)-\sigma^{(n)}(r,x_{\kappa(n,r)}^n)\}dw_r\Big|^2\Big\}^\frac{1}{2} \{E|R(s,x_{\kappa(n,s)}^n)|^2\}^\frac{1}{2} ds \notag 
\\
&\leq  \int_0^t \Big\{E\int_{\kappa(n,s)}^s |\sigma(x_r^n)-\sigma^{(n)}(r,x_{\kappa(n,r)}^n)|^2 dr \Big\}^\frac{1}{2} \{E|R(s,x_{\kappa(n,s)}^n)|^2\}^\frac{1}{2} ds \notag
\\
&\leq Kn^{-\frac{3}{2}-\frac{1}{2+\delta}} \label{eq:E5}  
\end{align}
for any $t\in [0,T]$ and $n\in\mathbb{N}$. Moreover, for estimating $E_6$ and $E_7$, one uses H\"{o}lder's inequality to obtain the following estimates, 
\begin{align}
E_6+E_7&:= E \int_0^t\int_{\kappa(n,s)}^s \int_Z \{\gamma(x_r,z)-\gamma(x_r^n,z)\}\tilde{N}(dr,dz) R(s,x_{\kappa(n,s)}^n) ds \notag
\\
&+ E \int_0^t\int_{\kappa(n,s)}^s \int_Z \{\gamma(x_r^n,z)-\gamma^{(n)}(r,x_{\kappa(n,r)}^n,z)\}\tilde{N}(dr,dz) R(s,x_{\kappa(n,s)}^n) ds \notag
\\
& \leq  \int_0^t \Big\{E\Big|\int_{\kappa(n,s)}^s \int_Z \{\gamma(x_r,z)-\gamma(x_r^n,z)\}\tilde{N}(dr,dz)\Big|^2\Big\}^\frac{1}{2}  \{E|R(s,x_{\kappa(n,s)}^n)|^2\}^\frac{1}{2} ds \notag
\\
&+  \int_0^t\Big\{E\Big|\int_{\kappa(n,s)}^s\int_Z \{\gamma(x_r^n,z)-\gamma^{(n)}(r,x_{\kappa(n,r)}^n,z)\}\tilde{N}(dr,dz)\Big|^2\Big\}^\frac{1}{2} \{E|R(s,x_{\kappa(n,s)}^n)|^2\}^\frac{1}{2} ds \notag
\end{align} 
which on the application of an elementary inequality of stochastic integral yields, 
\begin{align}
E_6+E_7 & \leq  \int_0^t \Big\{E\int_{\kappa(n,s)}^s \int_Z|\gamma(x_r,z)-\gamma(x_r^n,z)|^2 \nu(dz)dr\Big\}^\frac{1}{2}  \{E|R(s,x_{\kappa(n,s)}^n)|^2\}^\frac{1}{2} ds \notag
\\
&+  \int_0^t\Big\{E\int_{\kappa(n,s)}^s \int_Z|\gamma(x_r^n,z)-\gamma^{(n)}(r,x_{\kappa(n,r)}^n,z)|^2 \nu(dz) dr\Big\}^\frac{1}{2} \{E|R(s,x_{\kappa(n,s)}^n)|^2\}^\frac{1}{2} ds \notag
\end{align} 
and then one uses Remarks [\ref{rem:growth}, \ref{rem:growth(n)}], equation \eqref{eq:R:rate}, Lemma \ref{lem:gamma-gamma(n):rate} and Young's inequality to obtain, 
\begin{align}
E_6+E_7 & \leq  K n^{-1}\int_0^t \Big\{\sup_{0\leq r\leq s}E |e_r^n|^2\Big\}^\frac{1}{2}  ds +Kn^{-2} \notag
\\
&\leq  K \int_0^t \sup_{0\leq r\leq s}E |e_r^n|^2  ds + Kn^{-2} \label{eq:E6+E7}
\end{align} 
for any $t\in[0,T]$ and $n\in\mathbb{N}$. For estimating $E_8$, one uses Young's inequality to obtain the following,  
\begin{align}
E_8&:=E \int_0^te_s^n\int_{\kappa(n,s)}^s \int_Z  \Big\{b(x_r^n+\gamma^{(n)}(r,x_{\kappa(n,r)}^n,z_1))-b(x_r^{n}) \notag
\\
&\qquad-\sum_{u=1}^d \frac{\partial b(x_r^n)}{\partial x^{u}} \gamma^{(n),u}(r,x_{\kappa(n,r)}^n,z_1) \Big\} \nu(dz_1)dr ds \notag
\\
&\leq K E \int_0^t|e_s^n|^2ds+K n^{-1}E \int_0^t\int_{\kappa(n,s)}^s \int_Z  \Big|b(x_r^n+\gamma^{(n)}(r,x_{\kappa(n,r)}^n,z_1))-b(x_r^{n}) \notag
\\
&\qquad-\sum_{u=1}^d \frac{\partial b(x_r^n)}{\partial x^{u}} \gamma^{(n),u}(r,x_{\kappa(n,r)}^n,z_1) \Big|^2 \nu(dz_1)dr ds \notag
\\
&\leq  K \int_0^t \sup_{0\leq r \leq s} E|e_r^n|^2ds + K n^{-1}E \int_0^t\int_{\kappa(n,s)}^s \int_Z (1+|x_r^n+\gamma^{(n)}(r,x_{\kappa(n,r)}^n,z_1)|+|x_r^n|)^{2\rho-2} \notag
\\
&\qquad \times |\gamma^{(n)}(r,x_{\kappa(n,r)}^n,z_1)|^4 \nu(dz_1)dr ds \notag
\end{align}
which further implies, 
\begin{align}
E_8\leq  K \int_0^t \sup_{0\leq r \leq s} E|e_r^n|^2ds+Kn^{-2} \label{eq:E8}
\end{align}
for any $t\in[0,T]$ and $n\in\mathbb{N}$. The proof is completed by substituting values from equations \eqref{eq:E1} to \eqref{eq:E8} in equation \eqref{eq:E1+E8}.  
\end{proof}
%--------------------------------------------------------------------------------------
%                   Main Result
%-----------------------------------------------------------------
\begin{proof}[\textbf{Proof of Theorem \ref{thm:main:thm}}]
By using It\^{o}'s formula,
\begin{align*}
|e_t^n|^2& = 2 \int_{0}^{t} e_s^n\{b(x_s)-b^{(n)}(x_{\kappa(n,s)}^n)\} ds + 2 \int_{0}^{t} e_s^n\{\sigma(x_s)-\sigma^{(n)}(s,x_{\kappa(n,s)}^n)\} dw_s
\\
&  +   \int_{0}^{t} |\sigma(x_s)-\sigma^{(n)}(s,x_{\kappa(n,s)}^n)|^2 ds\notag
\\
&+ 2 \int_{0}^{t} \int_Z e_s^n  \{\gamma(x_s,z)- \gamma^{(n)}(s,x_{\kappa(n,s)}^n,z)\}\tilde{N}(ds, dz)
\\
& +  \int_{0}^{t} \int_Z \big\{  |e_s^n+\gamma(x_s,z)- \gamma^{(n)}(s,x_{\kappa(n,s)}^n,z)|^2 -|e_s^n|^2\notag
\\
&\qquad-2e_s^n\{\gamma(x_s,z)- \gamma^{(n)}(s,x_{\kappa(n,s)}^n,z)\}\big\} N(ds, dz)
\end{align*}
almost surely for any $t \in [0,T]$ and $n\in\mathbb{N}$. Thus,
\begin{align}
E|e_t^n|^2& = 2 \int_{0}^{t} e_s^n\{b(x_s)-b^{(n)}(x_{\kappa(n,s)}^n)\} ds +  E \int_{0}^{t} |\sigma(x_s)-\sigma^{(n)}(s,x_{\kappa(n,s)}^n)|^2 ds \notag 
\\
& +  E \int_{0}^{t} \int_Z \big\{  |e_s^n+\gamma(x_s,z)- \gamma^{(n)}(s,x_{\kappa(n,s)}^n,z)|^2 -|e_s^n|^2\notag
\\
&-2e_s^n\{\gamma(x_s,z)- \gamma^{(n)}(s,x_{\kappa(n,s)}^n,z)\}\big\}  \nu(dz) ds \notag
\end{align}
which can further be written as 
\begin{align}
E|e_t^n|^2& = 2 \int_{0}^{t} e_s^n\{b(x_s)-b(x_s^n)\} ds+2 \int_{0}^{t} e_s^n\{b(x_s^n)-b^{(n)}(x_{\kappa(n,s)}^n)\} ds \notag
\\
& +  E \int_{0}^{t} |\sigma(x_s)-\sigma(x_s^n)|^2 ds+  E \int_{0}^{t} |\sigma(x_s^n)-\sigma^{(n)}(s,x_{\kappa(n,s)}^n)|^2 ds \notag 
\\
&+2E\int_{0}^{t}\sum_{i=1}^d \sum_{j=1}^m \{\sigma^{ij}(x_s)-\sigma^{ij}(x_s^n)\}\{\sigma^{ij}(x_s^n)-\sigma^{(n),ij}(s,x_{\kappa(n,s)}^n)\} ds \notag
\\
& + E \int_{0}^{t} \int_Z   |\gamma(x_s,z)- \gamma^{(n)}(s,x_{\kappa(n,s)}^n,z)|^2 \nu(dz) ds \notag
\end{align}
and then one uses Young's inequality to obtain the following estimate,
\begin{align}
E|e_t^n|^2& \leq  \int_{0}^{t} \big(2e_s^n\{b(x_s)-b(x_s^n)\} + \eta |\sigma(x_s)-\sigma(x_s^n)|^2 \big) ds \notag
\\
&+2 \int_{0}^{t} e_s^n\{b(x_s^n)-b^{(n)}(x_{\kappa(n,s)}^n)\} ds \notag
\\
& +  K E \int_{0}^{t} |\sigma(x_s^n)-\sigma^{(n)}(s,x_{\kappa(n,s)}^n)|^2 ds \notag 
\\
& + K E \int_{0}^{t} \int_Z   |\gamma(x_s,z)- \gamma(x_s^n,z)|^2 \nu(dz) ds  \notag
\\
&+  K E \int_{0}^{t} \int_Z   |\gamma(x_s^n,z)- \gamma^{(n)}(s,x_{\kappa(n,s)}^n,z)|^2 \nu(dz) ds \notag
\end{align}
for any $t\in[0,T]$ and $n\in\mathbb{N}$. Further, by using Assumption A-\ref{as:sde:lipschitz}, Lemmas [\ref{lem:gamma-gamma(n):rate}, \ref{lem:sigma-sigma(n):rate}, \ref{lem:b-b(n):rate}], following can be obtained, 
\begin{align}
E|e_t^n|^2& \leq  \int_{0}^{t} \sup_{0\leq r\leq s} E|e_r^n|^2 ds +Kn^{-1-\frac{2}{2+\delta}}+Kn^{-\frac{3}{2}-\frac{1}{2+\delta}} +Kn^{-2} \notag
\end{align}
for any $t\in [0,T]$ and $n\in\mathbb{N}$. Finally, the application of Gronwall's inequality completes the proof.  
\end{proof}

%-----------------------------------------------------------------


\begin{thebibliography}{100}
%-----------------------------------------------------------------
%\bibitem{bruti2007} Bruti-Liberati, N. and Platen, E. (2007). Strong approximations of stochastic differential equations with jumps. \emph{Journal of Computational and Applied Mathematics}. 205(2) 982-1001.

\bibitem{dareiotis2016} K. Dareiotis, C. Kumar  and S. Sabanis (2016). On tamed Euler approximations of SDEs driven by L\'evy noise with application to delay equations. \emph{SIAM Journal on Numerical Analysis}, 54(3), 1840-1872.

%\bibitem{gyongy1980} Gy\"{o}ngy, I. and Krylov, N. V. (1980). On Stochastic Equations with Respect to Semi-martingales I. \emph{Stochastics}. 4 1-21.

%\bibitem{higham2005} Higham, D. J. and   Kloeden, P. E. (2005). Numerical methods for non-linear stochastic differential equations with jumps. \emph{Numerische Mathematik}. 110(1) 101-119.

%\bibitem{higham2006}  D. J. Higham and  P. E. Kloeden  (2006). Convergence and Stability of Implicit Methods for Jump-diffusion Systems. \emph{International Journal of Numerical Analysis and Modelling}. 3(2) 125-140.

\bibitem{hutzenthaler2013} M. Hutzenthaler  and A. Jentzen (2013). Numerical approximations of stochastic differential equations with non-globally Lipschitz continuous coefficients, \emph{Mem. Amer. Math.
Soc.},   236, 1112.

\bibitem{hutzenthaler2010} M. Hutzethaler, A. Jentzen  and P. E. Kloeden  (2010). Strong and weak divergence in finite time of Euler's method for stochastic differential equations with non-globally Lipschitz continuous coefficients. \emph{Proceedings of the Royal Society A}. 467 1563-1576.

\bibitem{hutzenthaler2012} M. Hutzenthaler, A. Jentzen  and P. E. Kloeden  (2012). Strong convergence of an explicit numerical method for SDEs with nonglobally Lipschitz continuous coefficients. \emph{The Annals of Applied Probability}. 22(4) 1611-1641.


%\bibitem{jacod2005} Jacod, J., Kurtz, Thomas G., M\'{e}l\'{e}ard, S. and Protter, P. (2005). The approximate Euler method for L\'{e}vy driven stochastic differential equations. \emph{Ann. I. H. Poincar\'{e}-PR}. 41 523-558.

\bibitem{kumar2016a} C. Kumar and S. Sabanis (2016). On Milstein approximations with varying coefficients: the case of super-linear diffusion coefficients. \url{arXiv:1601.02695 [math.PR]}. 

\bibitem{kumar2017} C. Kumar and S. Sabanis (2017). On tamed Milstein scheme of SDEs driven by L\'evy noise, \emph{Discrete and Continuous Dynamical Systems-Series B}, 22(2), 421-463.

\bibitem{kumar2016b} C. Kumar and S. Sabanis (2016). On Explicit Approximations for L\'evy Driven SDEs with Super-linear Diffusion Coefficients, 	\url{arXiv:1611.03417 [math.PR]}. 

%\bibitem{Mik} Mikulevicius, R. and Pragarauskas, H. (2012). On $\mathcal{L}_p$-estimates of some singular integrals related to jump processes. \emph{SIAM J. Math. Anal.} 44(4) 2305-2328.

\bibitem{platen2010} Platen, E. and Bruti-Liberati, N. (2010). \emph{Numerical Solution of Stochastic Differential Equations with Jumps in Finance}, Springer-Verlag, Berlin.

\bibitem{sabanis2013} S. Sabanis  (2013). A note on tamed Euler approximations. \emph{Electronic Communications in Probability}. 18 1-10.

\bibitem{sabanis2015} S. Sabanis (2016). Euler approximations with varying coefficients: the case of superlinearly growing diffusion coefficients.  \emph{Annals of Applied Probability}, 26(4), 2083-2105.

\bibitem{situ2005} R. Situ (2005). \emph{Theory of Stochastic Differential Equations with Jumps and Applications}: Mathematical and Analytic Techniques with Applications to Engineering, Springer, New York.

\bibitem{tretyakov2013} M.V. Tretyakov   and Z. Zhang  (2013). A fundamental mean-square convergence theorem for SDEs with
locally Lipschitz coefficients and its applications, \emph{SIAM Journal on Numerical Analysis}, 51  3135-3162.

\bibitem{wang2013}  X. Wang  and  S. Gan  (2013). The tamed Milstein method for commutative stochastic differential equations with non-globally Lipschitz continuous coefficients. \emph{Journal of Difference Equations and Applications}. 19(3). 466-490.

\bibitem{zhang2014} Z. Zhang (2014). New explicit balanced schemes for SDEs with locally Lipschitz coefficients. \url{arXiv:1402.3708 [math.NA]}.

\end{thebibliography}
\end{document}